\newtheorem{theorem}{Theorem}
\newtheorem{assumption}{Assumption}
\newtheorem{definition}{Definition}
\newtheorem{remark}{Remark}
\newtheorem{lemma}{Lemma}
\newtheorem{problem}{Problem}
\begin{document}
\title{Model-Free Design and Analysis of 2DOF PI Controller for Noisy LTI Systems } 

\author{Taiga Kiyota and Kazuhiro Sato\thanks{T. Kiyota and K. Sato are with the Department of Mathematical Informatics, Graduate School of Information Science and Technology, The University of Tokyo, Tokyo 113-8656, Japan, email: kiyota-taiga850@g.ecc.u-tokyo.ac.jp (T. Kiyota), kazuhiro@mist.i.u-tokyo.ac.jp (K. Sato) }}
\maketitle
\thispagestyle{empty}
\pagestyle{empty}

\begin{abstract}
Set-point tracking for systems with unknown model parameters is a fundamental problem in control, and two-degree-of-freedom (2DOF) Proportional-Integral (PI) controllers---consisting of a feedforward controller and PI controller---are widely employed for this task.
In this paper, we propose a model-free design of 2DOF PI controllers, establish its theoretical properties, and compare them with a model-based method from both theoretical and numerical perspectives.
For the feedforward design, we extend an existing model-free algorithm to systems subject to Gaussian process and measurement noises. 
We derive a nonasymptotic lower bound on the required control input/output data length and characterize the resulting estimation error. 
For PI gain tuning, we formulate a constrained optimization problem and establish sample complexity of a zeroth-order optimization method.
Moreover, we quantify how inaccuracies in the feedforward design propagate to
the performance of the PI controller, highlighting an interaction that has not been examined in prior work.
We further provide a theoretical comparison between the proposed method and the model-based method.
In particular, for PI gain tuning, the proposed method is computationally more efficient by avoiding explicit gradient computations.
Numerical experiments demonstrate that the 2DOF PI controller designed by the proposed method exhibits better control performance than the model-based method.
\end{abstract}

\begin{IEEEkeywords}
PI controller, two-degree-of-freedom controller, data-driven control
\end{IEEEkeywords}

\IEEEpeerreviewmaketitle

\section{Introduction}\label{sec:intro}
The set-point tracking problem refers to the control task of driving a system's output to a desired constant set-point. 
This problem is fundamental in control theory and arises in a wide range of practical applications,
including motion control \cite{XueWenchaoEtal2017}, attitude control of aircraft \cite{Xia_et_al_2011}, and process control \cite{Zhou_etal_2018}. 
To address this problem, a two-degree-of-freedom (2DOF) PI controller, consisting of a feedforward controller and a PI controller, is commonly employed.
2DOF PI controllers are extensively utilized in industrial applications such as
temperature control \cite{Jain_etal_2019}, 
permanent magnet synchronous machines \cite{Hussain_2021,Kai_etal_2020}, 
and power systems\cite{Huang_etal_2015,Yongqing_etal_2019}.\par
Conventionally, controller design methodologies rely on model information of the target system.
Design of the feedforward term \cite{Araki_Taguchi03, VITECKOVA_2010, Youla_et_al_1985, Wang_Yan_et_al_2018}
and the PI gain \cite{He_Wang_2006, Boyd_et_al_2016} of 
a 2DOF PI controller often depends on model information. 
In practice, however, the complete model information is not readily available.
One approach to address this limitation is to estimate a model through system identification, and then design the controller, referred to as a model-based method. However,
this procedure can be costly and difficult when dealing with complex systems and in the presence of process and measurement noise. 
These challenges have motivated the development of model-free control approaches,
in which controllers are designed directly from input/output data without model information.\par

In this paper, we focus on the model-free design of a 2DOF PI controller for the set-point tracking problem in MIMO linear time-invariant (LTI) systems with additive Gaussian noise.
For unknown MIMO LTI systems, model-free methods have been proposed separately for
feedforward design \cite{Davison_1976, Roszak_Davison_2008} and for PI gain tuning \cite{Karimi_2017, Schuchert_et_al_2024}. 
In the context of feedforward design, previous works \cite{Davison_1976,Roszak_Davison_2008} do not take into account situations where stochastic process and measurement noises are added to the system.
For PI gain tuning, the previous works \cite{Karimi_2017, Schuchert_et_al_2024} do not establish any theoretical guarantees on sample complexity.
Moreover, in the model-free setting, the controller performance degrades due to the lack of access to system parameters. 
Nonetheless, none of the above studies consider how performance degradation in one component of 2DOF controller (feedforward or PI controller) affects the design of the other component.
To the best of our knowledge, few studies 
($\mathrm{i}$) have provided a theoretical analysis of model-free 2DOF PI controller design with respect to sample complexity, 
($\mathrm{ii}$) have established how inaccuracies in one component affect the performance of the other, and,
($\mathrm{iii}$) have conducted a comparative analysis against model-based methods.
\par

The main contributions of this paper are summarized below:
\begin{itemize} 
    \item We develop a model-free 2DOF PI controller design method for MIMO LTI systems with additive Gaussian noise,
    and provide its theoretical analysis. 
    The proposed method integrates ($\mathrm{i}$) the feedforward design and ($\mathrm{ii}$) the PI gain tuning.
    \item For feedforward design, we establish theoretical results for sample complexity.
    While our algorithm builds upon the classical approaches \cite{Davison_1976, Roszak_Davison_2008}, 
    we characterize the bounds on the time horizon of the input/output data and the estimation error of the feedforward.
    The resulting feedforward controller is subsequently used to generate the input/output data in the PI gain tuning algorithm.
    \item For PI gain tuning, we develop theoretical analyses of the sample complexity and of the performance degradation caused by feedforward estimation errors.
    Building on a model-free policy gradient method\cite{Convergence_Complexity_IEEE, Fazal_18, Takakura_Sato_24}, we
    establish the theoretical guarantees on the sample complexity of the input/output data.
    Moreover, we present that the performance degradation of the PI controller scales as $O(\gamma_u^2)$, where $\gamma_u$ is the estimation error of the feedforward.
     \item We theoretically and numerically compare the proposed method with the model-based method. 
    For feedforward design, the advantage of the proposed method over the model-based method depends on the performance level required.
    When a moderate level of steady-state accuracy is sufficient, the proposed method requires a smaller number of samples.
    For PI gain tuning, although the proposed method requires a larger number of samples than those needed for system identification in the model-based method,
    its computational burden is much smaller via avoiding explicit gradient computations.
    Furthermore, numerical experiments demonstrate that, given the same length of samples, the proposed method outperforms the model-based method.
    
\end{itemize}\par
The remainder of this paper is structured as follows. 
In Section \ref{sec:pre}, we provide an overview of 2DOF PI controllers.
In Section \ref{sec:problem_formulation}, we formulate the problems of designing the feedforward controller and tuning the PI gain. 
In Section \ref{sec:ProposedMethod} presents proposed methods along with their theoretical analyses.
Section~\ref{sec:Compare_IndirectApproach} is devoted to the theoretical comparison with the model-based method.
In Section \ref{sec:numerical_experiment}, we conduct numerical experiments to  compare with the model-based method in terms of control performance.
Section \ref{sec:concluding_remarks} is devoted to the conclusion.\par
{\it Notation}:  
$I$ and $O$ denote the identity matrix and the zero matrix, respectively.
For a vector $v\in \mathbb{R}^n$, $\|v \|$ denotes the standard Euclidean norm of $v$, and $v^\top$ denotes the transpose of $v$. 
For a matrix $A\in \mathbb{R}^{n\times m}$, $A^\top$ denotes the transpose of $A$, and $\|A\|_{\rm{F}}$ and $\|A\|_2$ denote the Frobenius norm and the spectral norm of $A$, respectively,
Let us denote the $i$-th singular of $A$  by $\sigma_{i}(A)$, indexed as $\sigma_{1}(A)\geq \cdots \geq \sigma_{n}(A)$, and the minimum singular value of $A$ by $\sigma_{\min}(A)$. 
The eigenvalues of $A$ with the maximum and minimum real part are denoted by $\lambda_{\max}(A)$ and $\lambda_{\min}(A)$, respectively.
For a full row rank matrix $A\in \mathbb{R}^{n\times m}$, $A^+$ denotes $A^\top(AA^\top)^{-1}$.
For a symmetric matrix $A\in \mathbb{R}^{n\times n}$, $A\succ O$ and $A\succeq O$ means $A$ is positive definite and positive semi-definite, respectively.
For matrices $A,\,B\in \mathbb{R}^{n\times n}$, $A\otimes B$ denotes the Kronecker product of $A$ and $B$.
$\mathrm{proj}_\Omega :\mathbb{R}^{n\times m} \to \Omega $ denotes the orthogonal projection onto a closed convex set $\Omega$.
For a random variable $X$, let us denote the sub-exponential norm of $X$ by $\|X\|_{\psi_1}$.

\section{Preliminaries}\label{sec:pre}

In this section, we summarize the properties of a 2DOF PI controller for a linear time-invariant system:
\begin{equation}\label{eq:dynamics}
\begin{split}
    \dot{x}(t)=Ax(t)+Bu(t){+w(t) },\ y(t)=C x(t){+v(t) },\  x(0)\sim \mathcal{D},
    \end{split}
\end{equation}
where $x(t)\in \mathbb{R}^n$ is the state, $u(t) \in \mathbb{R}^m$ is the control input,
$y(t)\in \mathbb{R}^p$ is the output with $p\leq m$, 
$w(t)$ and $v(t)$ are zero-mean Gaussian noises with covariance $W\succeq O$ and $V\succ O$, respectively,
and $\mathcal{D}$ is a probability distribution over $\mathbb{R}^n$. 
A 2DOF PI controller is given by
\begin{equation}\label{eq:2DoF}
    \begin{split}
       { u(t)=K_P e(t) + K_I z(t) +u_0,} 
    \end{split}
\end{equation}
where $e(t)=y^\star -y(t)$ is the tracking error, $y^\star \in \mathbb{R}^p$ is the desired output set-point,
$z(t)$ is the integrator of the tracking error, satisfying $\dot{z}(t)=e(t)$, and $u_0\in \mathbb{R}^p$ is the constant feedforward input.
Denote an equilibrium of the system by $(x^\star,\,u^\star)$, which satisfies
\begin{equation}\label{eq:equilibrium}
    \begin{split}
        0&=Ax^\star + Bu^\star, \quad y^\star = Cx^\star.
    \end{split}
\end{equation}
To guarantee the existence of $(x^\star,\,u^\star)$ satisfying (\ref{eq:equilibrium}) for any $y^\star \in\mathbb{R}^p$, we impose the following assumption.
\begin{assumption}\label{assump:dynamics}
The matrix $\begin{pmatrix}
    A & B\\
    C & O
\end{pmatrix}$ is of full row rank.
\end{assumption}
\noindent Note that this assumption
is automatically satisfied if $A$ is invertible, $B$ is of full column rank, and $C$ is of full row rank.
\par
The dynamics of $e_x(t)\coloneqq x(t)-x^\star$ is governed by
\begin{equation}\label{eq:error_dynamics}
    \dot{e}_x(t)=Ae_x(t) + B(u(t)-u^\star).
\end{equation}
Combining the above equation, (\ref{eq:2DoF}) and $e(t)= - C e_x(t)$,
the closed-loop error dynamics of the augmented system can be written as
\begin{equation}\label{eq:dynamics_PI}
    \begin{split}
        \begin{pmatrix} \dot{e_x}(t)\\ \dot{z} (t)\end{pmatrix} &=
        {(\bar{A}-\bar{B}K\bar{C})}
        \begin{pmatrix} e_x(t)\\ z(t) \end{pmatrix}+\bar{B} (u_0-u^\star)\\
        &\quad { + \begin{pmatrix}w(t)+B K_P v(t)\\ v(t)\end{pmatrix} },\\
        e(t)&=\begin{pmatrix}
           -C & O
       \end{pmatrix}  \begin{pmatrix} e_x(t)\\ z(t) \end{pmatrix} 
       { +v(t)},
    \end{split}
\end{equation}
where ${ \bar{A} }= \begin{pmatrix} A& O\\ -C&O \end{pmatrix}, { \bar{B} }= \begin{pmatrix}B \\O \end{pmatrix},
    { \bar{C} }=\begin{pmatrix}C &O \\ O&- I \end{pmatrix},$
and 
{ $K=\begin{pmatrix}K_P & K_I  \end{pmatrix}$.}
 { The mean and the variance of the augmented system} can be expressed in closed form as
\begin{equation}\label{eq:mean_var_augdynamics}
    \begin{split}
        &{
        \mathbb{E}\left[\begin{pmatrix} e_x(t)\\ z(t)\end{pmatrix} \right]=
        \exp(\bar{A}_K t)
        \left( \mathbb{E}\left[\begin{pmatrix} e_x(0)\\ z(0)\end{pmatrix}\right] +\bar{A}_K^{-1}\bar{B}(u_0-u^\star) \right)
        }\\
        &\quad\quad\quad\quad\quad\quad { -\bar{A}_K^{-1}\bar{B}(u_0-u^\star) },\\
        &{
        \text{Var}\left( \begin{pmatrix} e_x(t)\\ z(t)\end{pmatrix} \right)
    = \exp{(\bar{A}_K t)} \bar{\Sigma}_0 \exp{(\bar{A}_K^\top t)}
    }\\
    &\quad\quad\quad\quad\quad\quad {+\int_0^t  \exp{(\bar{A}_K t)}
    \tilde{W}_K
   \exp{(\bar{A}_K^\top t)} d\tau
        }
    \end{split}
\end{equation}
where $\bar{A}_K\coloneqq {\bar{A}-\bar{B}K\bar{C} }$,
$\Bar{\Sigma}_0=\begin{pmatrix}\Sigma_0 & O \\ O & z(0)z(0)^\top\end{pmatrix},\,
\Sigma_0= \text{Var}_{x(0)}\left( e_x(0) \right)$
and $\tilde{W}_K\coloneqq\begin{pmatrix}W + B K_P V (B K_P)^\top & B K_PV \\ (B K_P V)^\top& V\end{pmatrix} $ .

\section{Problem formulation} \label{sec:problem_formulation}
In this section, we formulate the problems of designing a 2DOF PI controller for a MIMO LTI system (\ref{eq:dynamics}) under the following assumption.
\begin{assumption}\label{assump:model-free}
    The matrices $A,\,B,\,C {,\,W,\,V }$, and the distribution $\mathcal{D}$ of system (\ref{eq:dynamics}) are unknown.
\end{assumption}
\subsection{Feedforward Design}
Designing an appropriate feedforward input is essential for improving the transient performance of a 2DOF PI controller. 
A common approach is setting the feedforward input $u_0$ to the equilibrium input $u^\star$, which is obtained by solving (\ref{eq:equilibrium}).
However, solving (\ref{eq:equilibrium}) requires the parameters of the system matrices $A,\,B,\,C$, which are unavailable in the model-free setting. To address this issue, we consider the following problem.
\begin{problem}\label{problem:feedforward}
    Under Assumptions~\ref{assump:dynamics} and \ref{assump:model-free},
    design a feedforward input $\hat{u}_0$ that approximates the equilibrium input $u^\star$.
 \end{problem}

\subsection{PI Gain Tuning}
To design a 2DOF PI controller that achieves satisfactory performance, we formulate an optimization problem for tuning the PI gain $ K\coloneq \begin{pmatrix}K_P & K_I\end{pmatrix}\in \mathbb{R}^{m \times 2p} $.
To quantify performance, we employ the infinite-horizon average cost
$f(K) \coloneqq\lim_{T\to \infty}\frac{1}{T}\mathbb{E}\left[ \int_0^T e(t)^\top Q_1 e(t)+ z(t)^\top Q_2 z(t) dt\right]$, where $Q_1,\,Q_2\succeq O$.
Quadratic cost functions are widely used to evaluate transient performance \cite{Zhou_etal_1996_RobustOptimalControl,Astrom_2012,Hubert_Raphael_1972},
and the expectation averages out the dependence on the initial state $x(0)$ and stochastic disturbances $w,\,v$ \cite{Astrom_2012,Hubert_Raphael_1972,Athans_1971}.
We define $Q^\prime = \begin{pmatrix}
                C^\top Q_1 C & O\\ O& Q_2
            \end{pmatrix}$,
and then the cost function becomes $
        f(K)
        = \mathrm{tr}(Q^\prime X+Q_1V)
        $, 
where $X$ is the solution to the Lyapunov equation
$\bar{A}_K X+X\bar{A}_K^\top +\tilde{W}_K=O$ \cite[Lemma 2.2]{Zheng_etal_LQG_2021_arXiv}.
The existence and uniqueness of $X$ is guaranteed because $\bar{A}_K$ is Hurwitz and 
$\tilde{W}_K$ is positive definite \cite{Bernstein_2009}.
To ensure that the finiteness of $f(K)$ coincides with the closed-loop stability, we impose the following assumption:
\begin{assumption}\label{assump:detectability}
   The pair $(\bar{A}_K,{Q^\prime}^{1/2})$ is detectable.
\end{assumption}
\noindent The above assumption is satisfied when $(A,C)$ is detectable and $Q_1,\,Q_2\succ O$.
Furthermore, we impose constraints on the PI gain $K$:
$K$ is contained in a bounded closed convex set $\Omega \subset \mathbb{R}^{m \times 2p} $ that specifies the structural properties of $K$ or bounds the norm to prevent excessive input energy.
If the equilibrium input $u^\star$, which satisfies (\ref{eq:equilibrium}), is available,
we can set the feedforward input $u_0=u^\star$.\par
Based on the above discussion, we formulate the following optimization problem:
 \begin{problem}\label{problem:gain_optimization}
 Under Assumptions~\ref{assump:model-free}, \ref{assump:detectability}, solve the following constrained optimization problem:
     \begin{equation*}
    \begin{split}
        \min_{K \in \Omega} \quad &{f(K)= \mathrm{tr}(Q^\prime X+Q_1V)}  \\
        {\text{s.t.} } \quad &
        { \bar{A}_K X+X\bar{A}_K^\top +\tilde{W}_K=O}
        ,\quad { K\in \mathcal{S},}
    \end{split}
\end{equation*} 
where $\mathcal{S}\coloneqq \{K\in \mathbb{R}^{m\times 2p}\,|\, \bar{A}_K \text{is Hurwitz}\}$ is the set of stabilizing PI gains, and $\Omega$ is a bounded, closed, convex set.
 \end{problem}
\noindent Note that the cost function $f(K)$ can be nonconvex, and an example is provided in Appendix~\ref{appendix:Non-convexity}.
 
 \begin{remark}
     Constrained optimization of feedback gains has been studied in various contexts.
     Talebi and Mesbahi \cite{Talebi_etal_2024} proposed a policy optimization method for the linear-quadratic regulator 
    (LQR) problem with linear constraints;
     however, their approach is restricted to linear constraints, and they left the extension to nonlinear constraints for future work.
     Safety control formulations, such as cost-constrained \cite{Zhao_Keyou_2024} or risk-constrained LQR \cite{Tsiamis_etal_2020,Zhao_Keyou_2021}, impose cost function constraints, whose parameters can be difficult to tune.
     On the other hand, the constraint in our formulation is that the
     PI gain $K$ lies in a bounded closed convex set $\Omega$.
     Such a formulation provides interpretable and easily implementable constraints, such as
     norm bounds or structural properties, including positive definiteness for port-Hamiltonian systems \cite{LinearPortHamilton}.
 \end{remark}

\begin{remark}
Which approach---model-free or model-based---is advantageous depends on the control problem.
For the LQR problem, the model-based method achieves the sample complexity lower bound $\Omega(1/\epsilon)$ \cite{Mania_etal_2019}, where $\epsilon$ is the sub-optimality gap of the cost function.
In contrast, in the model-free approach, Malik et al. \cite{Malik_etal_2020} establish the upper bound $\tilde{O}(1/\epsilon)$ and Moghaddam et al. \cite{Moghaddam_etal_2025} also derive the same rate under milder assumptions.
For model reference control problems, model-free approach achieves better sample complexity when the model order is large \cite{Formentin_etal_2014}.
To the best of our knowledge, however, there has been little work comparing  model-free and model-based method for 2DOF PI controller in terms of sample and computational complexities.
\end{remark}
\par
\section{proposed method}\label{sec:ProposedMethod}
\subsection{Feedforward Design}\label{sec:proposed_feedforward}
In this subsection, we propose a method based on the approaches developed in \cite{Davison_1976, Roszak_Davison_2008}
to solve Problem \ref{problem:feedforward}.
Unlike the existing approaches \cite{Davison_1976, Roszak_Davison_2008},
our work addresses systems subject to Gaussian process noise and measurement noise. In addition, we provide
a lower bound on the required input/output data horizon and quantify the estimation error of the feedforward.
\par

\begin{algorithm}
 \begin{algorithmic}[1]
 \caption{Estimating $u^\star$ }\label{algo:DataMatrix}
            \REQUIRE $K_P^\prime,\,\tau_u>0$.
            \STATE Simulate the system (\ref{eq:dynamics})
            up to time $\tau_u$ with
            the input $u(t)=K_P^\prime e^0(t)$,  and obtain $ e^0(\tau_u)$.
            \FOR{$i=1,\ldots,\,m$}
            \STATE Set $u_0^i=\begin{pmatrix} 0&\ldots&0&1&0&\ldots& 0\end{pmatrix}\in\mathbb{R}^m$,
            the $i$-th standard basis vector of $\mathbb{R}^m$.
            \STATE Simulate (\ref{eq:dynamics}) up to time $\tau_u$
            with the input $u^i(t)=K_P^\prime e^i(t)+u_0^i$,  
             and obtain $ e^i(\tau_u)$.
            \ENDFOR
            \STATE Compute $e^{\prime i}(\tau_u) = e^i(\tau_u)-e^0(\tau_u)$ and 
            \begin{equation}\label{eq:DataMatrix_tau}
                E= \begin{pmatrix}
                    e^{\prime 1}(\tau_u) & \cdots &e^{\prime m}(\tau_u)
                \end{pmatrix}.
            \end{equation}
            \RETURN 
            \begin{equation}\label{eq:def_of_hatu}
                \hat{u}_0 =-E^+ e^0(\tau_u).
            \end{equation}
        \end{algorithmic}
\end{algorithm}

We propose Algorithm \ref{algo:DataMatrix} to design a feedforward input that approximates the equilibrium input $u^\star$, which satisfies (\ref{eq:equilibrium}).
In Step 4 of Algorithm \ref{algo:DataMatrix}, we use a 2DOF P controller $u(t)=K_P^\prime e(t)+u_0$ as the control input.
According to (\ref{eq:error_dynamics}) and $e(t)=-Ce_x(t)$, 
the closed-loop dynamics in Step~4 is given by
\begin{equation}\label{eq:stoc_loop_ff}
\begin{split}
    \dot{e}_x(t) =A_K e_x(t) +B(u_0 - u^\star)+BK_P v(t) + w(t),
\end{split}
\end{equation}
where $A_K \coloneqq A-B K_P^\prime C $, which is assumed to be Hurwitz.
Since the state error $e_x(t)$ follows a normal distribution whose mean is
\begin{equation}\label{eq:mean_Pcont}
\begin{split}
    &\mathbb{E} \left[  e_x(t) \right] 
    = \exp{(A_K t)} \left\{
    \mathbb{E} \left[  e_x(0) \right] +A_K^{-1} B (u_0-u^\star) \right\} \\
        &\quad\quad\quad\quad\quad-A_K^{-1} B(u_0-u^\star),
\end{split}
\end{equation}
the expectation of steady-state tracking error can be expressed as
\begin{equation}\label{eq:e_infty}
    {\mathbb{E} \left[  e(\infty) \right] }={ C A_K^{-1}B} (u_0-u^\star).
\end{equation}
 Let $e^0(t)$ denote the tracking error under $u(t)=K_P^\prime e^0(t)$.
 Similarly, let $e^i(t)$ denote the tracking error under  $u^i(t)=K_P^\prime e^i(t)+u_0^i$,
 where $u_0^i$ is the $i$-th standard basis vector of $\mathbb{R}^m$.
From (\ref{eq:e_infty}), we have
\begin{equation}\label{eq:e_0}
    {\mathbb{E} \left[  e^0(\infty) \right] }= -{ C A_K^{-1}B}u^\star,\quad
     {\mathbb{E} \left[  e^i(\infty) \right] }={ C A_K^{-1}B}(u_0^i-u^\star).
\end{equation}
 Define $e^{\prime i}(\cdot) = e^i(\cdot)-e^0(\cdot)$ and 
 \begin{equation}\label{eq:Estar}
     E^\star ={ \begin{pmatrix}
                    \mathbb{E} \left[  e^{\prime 1} \right] & \dots &\mathbb{E} \left[  e^{\prime m} \right]
                \end{pmatrix}}.
 \end{equation}
From (\ref{eq:e_0}) and $\begin{pmatrix}u_0^1 & \dots &u_0^m\end{pmatrix} =I$,
it follows that $E^\star =C A_K^{-1}B$.
The matrix $E^\star =C A_K^{-1}B$ is of full row rank when Assumption~\ref{assump:dynamics} is satisfied.
Therefore we can choose the equilibrium input $u^\star$ as
\begin{equation}\label{eq:ustar}
    u^\star = -{E^\star}^+ \mathbb{E}[e^0(\infty)].
\end{equation}
In Algorithm \ref{algo:DataMatrix}, $e^0(\tau_u)$, $E$, and $\hat{u}$ serve as approximations of $\mathbb{E}[e^0(\infty)]$, $E^\star$, and $u^\star$, respectively.
Equation~(\ref{eq:def_of_hatu}) has been employed in the algorithms of previous studies \cite{Davison_1976, Roszak_Davison_2008} for its simplicity.
Nevertheless, our contributions lie in addressing settings that were not considered in those works and establishing a theoretical result, as formalized in Theorem~\ref{thm:size_of_tau_u}.
The well-definedness of $\hat{u}_0$ is guaranteed by Lemma~\ref{lem:fullrank_E} in Appendix~\ref{appendix:well-definedness_FF}.
 
\begin{remark}\label{rem:Pgain_stabilizing}
    We assume that there exists a stabilizing P gain $K_P^\prime$ and that it is available.
    In Algorithm~\ref{algo:DataMatrix}, any stabilizing P gain can be used as $K_P^\prime$, and its selection is independent of the constraint set $\Omega$ in Problem~\ref{problem:gain_optimization}.
    Although the assumption that the stabilizing P gain can be obtained may appear strong, its construction is outside the scope of our work
    because the main objective of this paper is the theoretical analysis of Algorithm~\ref{algo:DataMatrix}.
    We note, however, that stabilizing P gains can be designed for certain classes of systems without knowledge of the system matrices $A,\,B,\,C$.
    For example, $K_P^\prime=O$ can be chosen for open-loop stable systems, and a positive definite gain can be chosen for port-Hamiltonian systems \cite{LinearPortHamilton}.
\end{remark}

The following theorem presents the required time horizon $\tau_u$ of the input/output data
 and the estimation error of the feedforward.
\begin{theorem}\label{thm:size_of_tau_u}
Suppose that $\sigma_p(C A_K^{-1} B)\geq 4\sqrt{2m\mathrm{tr}(C\Sigma C^\top) }$ holds,
where $\Sigma$ is the solution to (\ref{eq:Pcont_Lyapunov}).
For any $\epsilon_u \geq 0$, set
\begin{equation}\label{eq:bound_of_tau_u}
    \begin{split}
       \tau_u\geq \max\left\{ 2\|Z\|_2 \log \left( \max\left\{  \frac{M_1 }{ \epsilon_u} , M_3\right\} \right),\, \|Z\|_2 \log M_2 \right\},
    \end{split}
\end{equation}
where $M_1$, $M_2$, and $M_3$ are defined in (\ref{eq:def_M1}), (\ref{eq:def_M2}), and (\ref{eq:def_M3}), respectively, and $Z$ is the unique solution to the Lyapunov equation $A_K^\top Z +Z A_K+I=O$. 
  Then, for any $\delta_u>0$, we have 
  \begin{equation}\label{eq:bound_hatu_uncertainty}
        \begin{split}
            \| \hat{u}_0 -u^\star\|& \leq
            \epsilon_u + \bar{S}(\delta_u)
        \end{split}
    \end{equation}
  with probability greater than $1-\delta_u-M_4$, 
  where $\bar{S}(\delta_u)$ and $M_4$  are defined in (\ref{eq:def_M4}) and (\ref{eq:def_barS}), respectively, and
  $\mathfrak{G}$ and $c$ in (\ref{eq:S_FF}) and (\ref{eq:Sm_FF}) denote the sub-Gaussian norm of the standard normal distribution and
   a positive absolute constant, respectively.
  
\end{theorem}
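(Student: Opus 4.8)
The plan is to bound $\|\hat{u}_0 - u^\star\|$ by splitting the error into a \emph{transient} part, governed by the finite horizon $\tau_u$ and ultimately absorbed into $\epsilon_u$, and a \emph{stochastic} part, governed by the Gaussian noise and absorbed into $\bar{S}(\delta_u)$. Writing $\bar{e}^0 := \mathbb{E}[e^0(\infty)]$ and using the identity $u^\star = -{E^\star}^+\bar{e}^0$ from (\ref{eq:ustar}), I would add and subtract $E^+\bar{e}^0$ to get
\begin{equation*}
    \hat{u}_0 - u^\star = -E^+\bigl(e^0(\tau_u)-\bar{e}^0\bigr) - \bigl(E^+ - {E^\star}^+\bigr)\bar{e}^0,
\end{equation*}
whence
\begin{equation*}
    \|\hat{u}_0 - u^\star\| \le \|E^+\|_2\,\|e^0(\tau_u)-\bar{e}^0\| + \|E^+ - {E^\star}^+\|_2\,\|\bar{e}^0\|.
\end{equation*}
Both $\|e^0(\tau_u)-\bar{e}^0\|$ and the matrix discrepancy $\|E - E^\star\|$ (which controls the pseudoinverse perturbation) split in turn into a deterministic mean-deviation piece and a zero-mean Gaussian fluctuation piece, corresponding exactly to the $\epsilon_u$ and $\bar{S}(\delta_u)$ contributions.

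For the transient piece, I would invoke the closed-form mean (\ref{eq:mean_Pcont}): the deviation $\mathbb{E}[e^i(\tau_u)] - \mathbb{E}[e^i(\infty)]$ equals $-C\exp(A_K\tau_u)\{\mathbb{E}[e_x(0)] + A_K^{-1}B(u_0^i - u^\star)\}$, so it is controlled by $\|\exp(A_K\tau_u)\|_2$. Since $Z$ solves $A_K^\top Z + Z A_K + I = O$, the Lyapunov function $V(x)=x^\top Z x$ yields $\dot V \le -V/\|Z\|_2$ and hence $\|\exp(A_K\tau_u)\|_2 \le \sqrt{\cond(Z)}\,\exp(-\tau_u/(2\|Z\|_2))$, which is precisely the origin of the $\|Z\|_2$-scaling and of the decay rate $\exp(-\tau_u/(2\|Z\|_2))$. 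Choosing $\tau_u \ge 2\|Z\|_2\log(M_1/\epsilon_u)$ drives the transient contribution below $\epsilon_u$, while the companion threshold $\|Z\|_2\log M_2$ keeps the transient perturbation of $E$ small enough that $\sigma_{\min}(E)$ stays bounded away from zero, and $M_3$ in (\ref{eq:bound_of_tau_u}) enforces a floor needed for the perturbation estimates to be valid.

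For the stochastic piece, $e_x(\tau_u)$ is Gaussian by (\ref{eq:mean_var_augdynamics}), with covariance dominated by the steady-state covariance $\Sigma$ solving (\ref{eq:Pcont_Lyapunov}); hence each fluctuation $C\bigl(e^i(\tau_u)-\mathbb{E}[e^i(\tau_u)]\bigr)$ is sub-Gaussian with parameter governed by $\mathrm{tr}(C\Sigma C^\top)$ and $\|C\Sigma C^\top\|_2$. A Gaussian concentration bound, combined with a union bound over the $m+1$ simulations, gives that with probability at least $1-\delta_u$ every fluctuation is at most the quantity $\bar{S}(\delta_u)$ in (\ref{eq:def_barS}), into which the absolute constant $c$ of (\ref{eq:Sm_FF}) and the sub-Gaussian norm $\mathfrak{G}$ of (\ref{eq:S_FF}) enter. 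The hypothesis $\sigma_p(CA_K^{-1}B) \ge 4\sqrt{2m\,\mathrm{tr}(C\Sigma C^\top)}$ then guarantees, via Weyl's inequality applied to $E = E^\star + \text{(perturbation)}$, that the noise-induced perturbation cannot overwhelm $\sigma_{\min}(E^\star)=\sigma_p(CA_K^{-1}B)$, so on a complementary event of probability at least $1-M_4$ the matrix $E$ retains full row rank; this simultaneously bounds $\|E^+\|_2$ and licenses a Wedin-type perturbation estimate $\|E^+ - {E^\star}^+\|_2 \lesssim \|E-E^\star\|_2/\sigma_{\min}(E^\star)^2$.

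The main obstacle is the nonlinear coupling introduced by the pseudoinverse: both $\|E^+\|_2$ and $\|E^+ - {E^\star}^+\|_2$ can be controlled only after establishing that $E$ stays full row rank, which itself requires the singular-value gap condition together with a clean split of $\|E-E^\star\|$ into transient and stochastic pieces that are each provably small. Propagating these two error sources through the pseudoinverse while keeping every constant explicit---so that they aggregate exactly into the stated $M_1,\dots,M_4$ and $\bar{S}(\delta_u)$---is the delicate bookkeeping step; the final probability budget $\delta_u + M_4$ then reflects precisely the split between the Gaussian concentration failure ($\delta_u$) and the failure of $E$ to remain well-conditioned ($M_4$).
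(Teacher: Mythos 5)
Your proposal is correct and follows essentially the same route as the paper's proof: the same algebraic decomposition of $\hat{u}_0-u^\star$ (up to a trivial rearrangement of which pseudoinverse multiplies the fluctuation term), the same exponential-decay bound on $\|\exp(A_K\tau_u)\|_2$ via the Lyapunov solution $Z$, the same Weyl-plus-Wedin treatment of the pseudoinverse perturbation conditioned on $\sigma_p(E)\geq\sigma_p(E^\star)/2$, the same Hanson--Wright-type Gaussian concentration for the fluctuations of $E$ and $e^0(\tau_u)$, and the same probability split into $\delta_u$ (concentration failure) plus $M_4$ (loss of conditioning of $E$). The only deviations are cosmetic --- you bound $\|E^+\|_2$ directly rather than splitting off ${E^\star}^+$, and you slightly mis-attribute the roles of $M_2$ (which in the paper controls $\Sigma_{\tau_u}$ versus the steady-state $\Sigma$) and $M_3$ --- neither of which affects the argument.
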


\begin{figure*}[b]
    \centering
    \rule{\textwidth}{0.6pt}
    
    \begin{align}
    &A_K\Sigma +\Sigma A_K^\top +W+BK_P V(BK_P)^\top=O\label{eq:Pcont_Lyapunov}\\
       & M_1=\frac{\|Z\|_2 }{\lambda_{\min}(Z)} \max\left\{
        \frac{2\|C\|_2 (D_x + \|A_K^{-1}\|_2 \|B\|_2 \|u^\star\| )}{\sigma_p(CA_K^{-1}B)},\,
        \frac{
        8\sqrt{2mp} \| C\|_2^2 \|A_K^{-1}\|_2^2 \|B\|_2^2\|u^\star\|
        }{\sigma_p(CA_K^{-1}B)}
        \right\} \label{eq:def_M1} 
        \\
        &D_x= \|\mathbb{E}[e_x(0)]\| \notag
        \\
        &M_2 =\frac{\|Z\|_2^2 \|C\|_2^2 }{\lambda_{\min}(Z)^2}\max\left\{  
        \frac{ \mathrm{tr}(\Sigma_0)}{\mathrm{tr}(C\Sigma C^\top)}
        ,\frac{\|\Sigma_0\|_\mathrm{F} }{\|C\Sigma C^\top\|_\mathrm{F} }
        \right\} \label{eq:def_M2}\\
       &M_3=\frac{\|Z\|_2 \|C\|_2}{\lambda_{\min}(Z)} \max\left\{
        2\sqrt{2mp} \|C\|_2 \|A_K^{-1}\|_2^2 \|B\|_2^2  ,\,
        \frac{D_x +\|A_K^{-1}\|_2 \|B\|_2 \|u^\star \| }{ \|y^\star\| }
        \right\}\label{eq:def_M3} \\
       & \bar{S}(\delta_u)=\frac{1}{\sigma_p(CA_K^{-1}B)}\left(
       4\sqrt{2}\|C\|_2 \|A_K^{-1}\|_2 \|B\|_2 \|y^\star \|S_m(\delta_u/2)
        + 2\left(1+ \sqrt{2}\|C\|_2 \|A_K^{-1}\|_2 \|B\|_2S_m(\delta_u/2) \right) 
         S(\delta_u/2) 
         \right)
         \label{eq:def_barS}\\
        &S(\delta_u)=\sqrt{2\mathrm{tr}\left(C \Sigma C^\top \right)+
        \frac{9(\sqrt{2c}+2)\mathfrak{G}^2 \|C \Sigma C^\top \|_{\mathrm{F}} }{c^2}\log\frac{2}{\delta_u} 
        }\label{eq:S_FF}\\
        &S_m(\delta_u)=\sqrt{2m\mathrm{tr}\left(C \Sigma C^\top \right)+
        \frac{9(\sqrt{2mc}+2)\mathfrak{G}^2 \|C \Sigma C^\top \|_{\mathrm{F}} }{c^2}\log\frac{2}{\delta_u} 
        }
        \label{eq:Sm_FF}\\
        &M_4=2\exp\left(-c^2 
        \frac{
        \sigma_p(E^\star)^2/16-2m\mathrm{tr}(C\Sigma C^\top) 
        }{
        9(\sqrt{2mc}+2)\mathfrak{G}^2 \|C \Sigma C^\top \|_{\mathrm{F} }
        } \right)
        \label{eq:def_M4}
    \end{align}
    
\end{figure*}
\begin{proof}
See Appendix \ref{appendix:proof_of_SizeTauU}.
\end{proof}
\noindent Theorem~\ref{thm:size_of_tau_u} is used in the derivation of Theorem~\ref{thm:Samp_comp_gain}, which provides the theoretical analysis of the PI gain tuning method presented in Algorithm~\ref{algo:projected_grad_gain}.

\begin{remark}
    The condition in Theorem~\ref{thm:size_of_tau_u}, $\sigma_p(C A_K^{-1} B)\geq 4\sqrt{2m\mathrm{tr}(C\Sigma C^\top) }$, requires that the input/output signal strength is sufficiently larger than the magnitude of the noise.
    This requirement is analogous to the phenomenon that
    system identification accuracy deteriorates when the signal-to-noise ratio is small \cite{Oymak_Ozay}.
\end{remark}

\begin{remark}\label{rem:Uncertainty_FF}
    The second term in (\ref{eq:bound_hatu_uncertainty}) arises from stochastic variability of the noise, and it can be small when the matrix $C$ is sparse.
    From (\ref{eq:def_barS}), the dominant contribution to $\bar{S}(\delta_u)$ is given by
    \begin{equation}\label{eq:dominant_term_BarS}
        \frac{
        2 \sqrt{2}\|C\|_2 \|A_K^{-1}\|_2 \|B\|_2S_m(\delta_u/2)
         S(\delta_u/2) 
        }{\sigma_p(CA_K^{-1}B)}
    \end{equation}
    The product $S_m(\delta_u/2)
         S(\delta_u/2) $ satisfies
    \begin{equation*}
        \begin{split}
           &\quad S_m(\delta_u/2)S(\delta_u/2)\\
        &\leq  \sqrt{m}\left(
    2\mathrm{tr}\left(C \Sigma C^\top \right)
    +\frac{9(\sqrt{2c}+2)\mathfrak{G}^2 \|C \Sigma C^\top \|_{\mathrm{F}} }{c^2}\log\frac{2}{\delta_u}
        \right) .
        \end{split}
    \end{equation*}
    Since both $\mathrm{tr}(C \Sigma C^\top)$ and $\|C \Sigma C^\top \|_{\mathrm{F}}$ scale as $O(n^2 p)$,
    the overall dependency of (\ref{eq:dominant_term_BarS}) on the system dimensions and the uncertainty is $O\left(n^{4}mp^{3/2}(1 +\log(\frac{1}{\delta_u}) \right)$. 
    We note that this dependence is substantially reduced when the matrix $C$ is sparse, which is standard in many control areas, including large-scale systems \cite{Lunze_1992}.
   Let $s$ denote the number of nonzero components of $C$.
    Then the dependencies of $\mathrm{tr}(C \Sigma C^\top)$ and $\mathrm{tr}(C \Sigma C^\top)$ are $O(s)$, which implies that (\ref{eq:dominant_term_BarS}) reduces to $O\left(n^{3/2}m s^{3/2} (1 +\log(\frac{1}{\delta_u}) \right)$.
\end{remark} 
\begin{remark}\label{rem:Estimate_tau_u}
    The lower bound on the time horizon $\tau_u$ (\ref{eq:bound_of_tau_u}) involves quantities that depend on the unknown system parameters.
    However, an approximate lower bound ignoring log factors can be obtained from the input/output data in Step~1 of Algorithm~\ref{algo:DataMatrix}.
    By \cite[Lemma 12]{Convergence_Complexity}, we obtain $\| \exp{(A_K t)}\|_2= O\left(\exp\left(-\frac{t}{2\|Z\|_2}\right) \right)$, 
    which means that $\frac{1}{2\|Z\|_2}$ corresponds to the slowest mode of the closed-loop dynamics.
    In Step~1 of Algorithm~\ref{algo:DataMatrix}, $\mathbb{E}[y(t)-y(\infty)]=C\exp{(A_K t)} (\mathbb{E}[x_0]-A_K^{-1}BK_P^\prime y^\star)$ holds,
    and therefore, 
    for sufficiently large $t$, $\|\mathbb{E}[y(t)-y(\infty)]\|$ is approximated $\approx c^\prime \exp \left(-\frac{t}{2\|Z\|_2}\right)$, where $c^\prime$ is a constant depending on $A,\,B,\,C$, and $y^\star$.
    By simulating the system until sufficiently large time $\tau_{\mathrm{large}}$ in Step~1 of Algorithm~\ref{algo:DataMatrix} and measuring $\|y(t)-y(\tau_{\mathrm{large}})\|$, we can estimate $\|Z\|_2$.
    In the rest part of Algorithm~\ref{algo:DataMatrix}, we can simulating the system with control horizon $\tau_u$, which is determined by the approximate lower bound of (\ref{eq:bound_of_tau_u}).
\end{remark}

\subsection{PI Gain Tuning}\label{subsec:Method_GainTuning}
In this subsection, we consider Problem \ref{problem:gain_optimization}. 
We employ a model-free projected gradient descent with gradient estimation and provide a theoretical analysis on the required sample size, control time horizon, and the gradient estimation error.
Furthermore, our analysis also quantifies how the estimation error of the feedforward $\hat{u}_0$, computed in Algorithm~\ref{algo:DataMatrix}, affects the performance of the resulting PI controller.
\par
Under Assumption~\ref{assump:model-free}, the gradient of the cost function $f(K)$ in Problem~\ref{problem:gain_optimization} is not directly accessible.
Therefore, we apply a zeroth-order optimization method \cite{Takakura_Sato_24,Fazal_18,Convergence_Complexity_IEEE}, which uses the estimated gradient.
Based on these studies, we propose Algorithm \ref{algo:estimation_grad_cost} to estimate the gradient $\nabla f(K)$.
Note that $\hat{u}_0$, computed by Algorithm~\ref{algo:DataMatrix}, was employed as one of the inputs to Algorithm~\ref{algo:estimation_grad_cost}.
\begin{algorithm}
 \begin{algorithmic}[1]
 \caption{Gradient Estimation}\label{algo:estimation_grad_cost}
            \REQUIRE $K,\,\hat{u}_0,\,N>0,{\,N_\mathrm{sub}>0 },\,\tau>0,\,r>0$.
            \FOR{$i=1,\ldots,\,N$}
            \STATE Sample $U^i \in \mathbb{R}^{m \times {2p} } $
            from the uniform distribution $\mathcal{S}$ over matrices with $\|U^i\|_{\mathrm{F}}=\sqrt{2mp}$. 
            \STATE  Set the input (\ref{eq:2DoF}) using the {PI} gain
            { $K^{i,1}=K+r U^i ,\, K^{i,2}=K-r U^i$}, the feedforward $\hat{u}_0$, and $z(0)=0$.
           { \FOR{$j=1,\ldots,\,N_{\mathrm{sub}}$} 
            \STATE For $k\in {1,2}$, 
            simulate the system (\ref{eq:dynamics}) up to time $\tau$ with the gain $K^{i,k}$, and obtain $\hat{f}^{i,j,k}=  e(\tau)^\top Q_1 e(\tau)+ z(\tau)^\top Q_2 z(\tau) $.
            \ENDFOR }
           { \STATE For $k\in {1,2}$, compute $f^{i,k}=\frac{1}{N_{\mathrm{sub}}}\sum_{j=1}^{N_{\mathrm{sub}}} \hat{f}^{i,j,k}$ }
            \ENDFOR 
            \STATE Compute 
            
            \begin{equation}\label{eq:nabla_hat}
                \hat{\nabla} f(K;\hat{u}_0)=\frac{1}{2rN}\sum_{i=1}^N (f^{i,1}-f^{i,2}) U^i.
            \end{equation} 
            \RETURN $\hat{\nabla} f(K;\hat{u}_0)$.
        \end{algorithmic}
\end{algorithm}\\

\begin{remark}
   Fallah et al. \cite{Fallash_etal_2025} developed a model-free policy gradient method for systems with additive Gaussian noise.
    Although they consider the Linear-Quadratic-Gaussian (LQG) problem under assumptions similar to Assumption~\ref{assump:model-free},
    they assume access to an oracle that returns the true value of the cost function, which is unrealistic in practice.
    To the best of our knowledge, few studies have investigated gradient estimation method for LQG-type problems using only input/output data.
\end{remark}

The following theorem establishes that
the estimated gradient $\hat{\nabla} f(K;\hat{u}_0)$ approximates the true gradient $\nabla f(K)$
 up to a term depending on $\|\hat{u}_0-u^\star \|$,
with high probability.
\begin{theorem}\label{thm:Samp_comp_gain}
We define the sublevel set by $S(a)\coloneqq\{K\in \mathbb{R}^{m\times 2p}\,|\,f(K)\leq a +\mathrm{tr}(Q_1V)\}$.
For any $\epsilon^\prime>0$ and $\delta >0$, set  $r=O( {\epsilon^\prime}^{1/2} )$, $ \tau = O(\log\frac{1}{\epsilon^\prime})$, $ N=O(\frac{1}{{\epsilon^\prime}^{2}} \log \frac{1}{\delta})$, and $N_\mathrm{sub}=O(\frac{1}{ {\epsilon^\prime}^3} (\log \frac{1}{\delta})^2 )$. 
Let $\hat{u}_0$ be the feedforward input computed from Algorithm \ref{algo:DataMatrix} with $\tau_u=O(\log\frac{1}{ \epsilon^\prime})$,
and use it as the input to Algorithm \ref{algo:estimation_grad_cost}.
Then, with probability greater than $1-\delta-\delta_u-M_4$, we have
$\left\| \nabla f(K)-\hat{\nabla} f(K;\hat{u}_0) \right\|_{\rm{F}}
=O(\epsilon^\prime+1+ (\log\frac{1}{\delta_u})^2 )$, for any $K\in S(a)\cap \Omega$,
where  $M_4$ is defined in (\ref{eq:def_M4}). 
\end{theorem}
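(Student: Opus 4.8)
The plan is to control the Frobenius-norm error through a triangle decomposition that isolates three conceptually distinct sources of error: the zeroth-order estimation error, the smoothing bias, and the propagation of the feedforward inaccuracy. Write $\tilde f(K;u_0)$ for the true infinite-horizon average cost obtained by running the closed loop with an arbitrary feedforward $u_0$, so that $f(K)=\tilde f(K;u^\star)$, and write $\tilde f_r(K;u_0)=\mathbb{E}_U[\tilde f(K+rU;u_0)]$ for its ball-smoothing over the directions $U$ used in Algorithm~\ref{algo:estimation_grad_cost}. I would then decompose
\begin{align*}
\hat{\nabla} f(K;\hat u_0) - \nabla f(K)
&= \big(\hat{\nabla} f(K;\hat u_0)-\nabla_K\tilde f_r(K;\hat u_0)\big)\\
&\quad + \big(\nabla_K\tilde f_r(K;\hat u_0)-\nabla_K\tilde f(K;\hat u_0)\big)\\
&\quad + \big(\nabla_K\tilde f(K;\hat u_0)-\nabla_K\tilde f(K;u^\star)\big),
\end{align*}
labelling the three right-hand terms (i), (ii), (iii) from top to bottom, bound each uniformly over the compact set $S(a)\cap\Omega$, and combine the probabilistic guarantees by a union bound at the end.

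For term (i) I would follow the zeroth-order analyses of \cite{Convergence_Complexity_IEEE, Fazal_18, Takakura_Sato_24}, using that $\frac{1}{2rN}\sum_i(f^{i,1}-f^{i,2})U^i$ is an unbiased estimate of $\nabla_K\tilde f_r(K;\hat u_0)$ except for the function-value errors $f^{i,k}-\tilde f(K^{i,k};\hat u_0)$. Each such error splits into a finite-horizon truncation bias and a stochastic averaging error. Since $\bar A_K$ is Hurwitz on $S(a)\cap\Omega$, the mean and variance formulas (\ref{eq:mean_var_augdynamics}) show the truncation bias decays as $O(\exp(-c_0\tau))$ for a rate $c_0>0$ set by $\bar A_K$, so $\tau=O(\log\frac{1}{\epsilon'})$ suppresses it even after the $1/r$ amplification in (\ref{eq:nabla_hat}). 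The instantaneous samples $\hat f^{i,j,k}$ are quadratic forms in the Gaussian state $(e_x(\tau),z(\tau))$, hence sub-exponential; a Bernstein bound in the $\|\cdot\|_{\psi_1}$ norm controls the $N_{\mathrm{sub}}$-average, and a vector concentration over the $N$ directions controls the remaining randomness. Matching these tails against the amplification $1/r=O({\epsilon'}^{-1/2})$ is precisely what forces $N=O({\epsilon'}^{-2}\log\frac{1}{\delta})$ and the heavier $N_{\mathrm{sub}}=O({\epsilon'}^{-3}(\log\frac{1}{\delta})^2)$, after which term (i) is $O(\epsilon')$ with probability $1-\delta$.

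Term (ii) is the standard smoothing bias: using that the Lyapunov solution $X$, and hence $\tilde f(\cdot;\hat u_0)$, has a Lipschitz Hessian on the sublevel set, the symmetry of the two-point estimator cancels the first-order term and leaves $O(r^2)=O(\epsilon')$. The main obstacle is term (iii), the feedforward propagation, which is the genuinely new ingredient. Here I would use (\ref{eq:mean_var_augdynamics}): replacing $u^\star$ by $\hat u_0$ only shifts the steady-state mean of the augmented state by $-\bar A_K^{-1}\bar B(\hat u_0-u^\star)$ and leaves the variance part of the cost unchanged, so $\tilde f(K;\hat u_0)-\tilde f(K;u^\star)$ is a quadratic form $(\hat u_0-u^\star)^\top M(K)(\hat u_0-u^\star)$ with $M(K)$ built from $\bar A_K^{-1}$, $\bar B$, and $Q'$. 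Differentiating in $K$ and bounding $\|\partial_K M(K)\|$ uniformly on $S(a)\cap\Omega$ via continuity of $K\mapsto\bar A_K^{-1}$ shows term (iii) is $O(\|\hat u_0-u^\star\|^2)$ with \emph{no} $1/r$ amplification, since this bias is captured exactly in the population cost the estimator targets. Invoking Theorem~\ref{thm:size_of_tau_u} with $\tau_u=O(\log\frac{1}{\epsilon'})$ gives $\|\hat u_0-u^\star\|\le\epsilon_u+\bar S(\delta_u)=O(\epsilon')+O(1+\log\frac{1}{\delta_u})$ with probability $1-\delta_u-M_4$, and squaring yields $O(1+(\log\frac{1}{\delta_u})^2)$. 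A union bound over the events of (i) and (iii) then delivers the probability $1-\delta-\delta_u-M_4$ and the overall rate $O(\epsilon'+1+(\log\frac{1}{\delta_u})^2)$. The delicate points I anticipate are establishing uniform Lipschitz and boundedness constants for $\bar A_K^{-1}$ and $\partial_K M(K)$ over $S(a)\cap\Omega$ so the propagation stays quadratic rather than blowing up near the stability boundary, and cleanly separating the feedforward-induced mean shift (common to $K^{i,1}$ and $K^{i,2}$) from the perturbations $\pm rU^i$ so that it does not re-enter the $1/r$-amplified part.
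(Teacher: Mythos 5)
Your proposal is correct and follows essentially the same strategy as the paper: a triangle-inequality decomposition of $\hat{\nabla} f(K;\hat u_0)-\nabla f(K)$ into a direction-sampling error (matrix Bernstein over the $N$ directions, forcing $N=O({\epsilon'}^{-2}\log\frac{1}{\delta})$), a smoothing bias controlled by a Hessian-Lipschitz bound on the sublevel set (forcing $r=O(\sqrt{\epsilon'})$), a finite-horizon truncation bias decaying like $\exp(-\tau/\mathfrak{C})$, a sub-exponential (Hanson--Wright) concentration for the inner average of the quadratic forms $\hat f^{i,j,k}$ (forcing $N_\mathrm{sub}$), and a feedforward-propagation term of order $\|\hat u_0-u^\star\|^2$ closed by Theorem~\ref{thm:size_of_tau_u} and a union bound. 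The paper splits the error into five terms rather than your three, but the grouping is a relabelling of the same quantities and the tools are identical, including the uniform stability margin on $S(a)\cap\Omega$ and the small-$r$ stabilizability of $K\pm rU^i$ that you flag as the delicate points.

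The one place where you genuinely diverge is the feedforward term. The paper bounds $|f_\tau(K\pm rU^i)-f_\tau(K\pm rU^i;\hat u_0)|$ for each perturbed gain and feeds these differences through the two-point estimator, so that term as literally written carries a $\frac{\sqrt{2mp}}{2r}$ prefactor in front of the $O(\gamma_u^2)$ bound before arriving at the stated $O(\gamma_u^2)$. Your route instead observes that replacing $u^\star$ by $\hat u_0$ shifts only the steady-state mean, so $\tilde f(K;\hat u_0)-\tilde f(K;u^\star)=(\hat u_0-u^\star)^\top \bar B^\top\bar A_K^{-\top}Q'\bar A_K^{-1}\bar B(\hat u_0-u^\star)$, and you differentiate this quadratic form in $K$ directly, using uniform bounds on $\bar A_K^{-1}$ and its derivative over the compact set $S(a)\cap\Omega$. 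This puts the feedforward bias entirely at the population level, where the symmetry of the two-point scheme makes the $1/r$ amplification manifestly absent rather than something to be argued away; it is a cleaner justification of the $O(\gamma_u^2)$ scaling the paper asserts. The price is that you must also verify the Hessian-Lipschitz property for $\tilde f(\cdot;\hat u_0)$ rather than for $f$ itself, but since the two differ by the smooth quadratic form above, this transfers immediately.
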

\begin{proof}
    See Appendix \ref{appendix:proof_of_SampCompGain}.
\end{proof}
In Theorem~\ref{thm:Samp_comp_gain}, the gradient estimation error $\left\| \nabla f(K)-\hat{\nabla} f(K;\hat{u}_0) \right\|_{\rm{F}}$ decomposes into two components:
an arbitrarily small term $O(\epsilon^\prime)$, and a residual term $O(1+ (\log \frac{1}{\delta_u} )^2)$, which cannot be made arbitrary small.
The latter term arises from the feedforward estimation error $\|\hat{u}_0-u^\star\|$, which is characterized in Theorem~\ref{thm:size_of_tau_u}.
As revealed in the proof of Theorem~\ref{thm:Samp_comp_gain}, the performance of the gradient estimation error scales $O(\|\hat{u}_0-u^\star\|^2)$.
In the model-free setting, we cannot obtain an accurate value of the equilibrium input $u^\star$ by solving (\ref{eq:equilibrium}).
The unavailability of $u^\star$ affects the sampling of the input/output data in the PI gain tuning, resulting in degradation of the PI gain tuning.
In contrast, when the model parameters $A,\,B,\,C,\,W,\,V,\, \mathcal{D}$ are accessible, we can obtain $u^\star$ accurately by solving (\ref{eq:equilibrium}), allowing the design of a 2DOF controller without considering the estimation error of $u^\star$ \cite{Youla_et_al_1985}.

\begin{remark}
    In Theorem~\ref{thm:Samp_comp_gain}, theoretical bounds on the parameters $\tau_u,\,\tau,\,r,\,N$, and $N_\mathrm{sub}$ depend on unknown system parameters.
   The bound on $\tau_u$ is derived from Theorem~\ref{thm:size_of_tau_u}, which provides an analysis on the control horizon $\tau_u$ and the estimation error of the feedforward.
    However, as discussed in Remark~\ref{rem:Estimate_tau_u}, the dominant factor in this bound, namely $\|Z\|_2$, can be estimated from a single simulation in Algorithm~\ref{algo:DataMatrix}, which makes the theoretical result practically usable.
    In contrast, the theoretical bounds on $\tau,\,r,\,N,$ and $N_\mathrm{sub}$ involve unknown constants, which limits the practical applicability.
    However, Theorem~\ref{thm:Samp_comp_gain} provides meaningful insights for the selection of these parameters.
    For example, the inner loop sample size $N_\mathrm{sub}$ should exceed the outer loop sample size $N$ in Algorithm~\ref{algo:estimation_grad_cost}, 
    and that the control horizon $\tau$ grows only logarithmically with respect to $\epsilon^\prime$.
    Note that in the literature on policy gradient methods for feedback optimization \cite{Fazal_18,Convergence_Complexity_IEEE,Takakura_Sato_24}, it is standard that the sample complexity result depends on unknown system parameters. 
\end{remark}

To address Problem~\ref{problem:gain_optimization},
we adopt Algorithm~\ref{algo:projected_grad_gain}, which is based on \cite[Algorithm~2]{Takakura_Sato_24}.
We assume that the initial point $K^0$ of Algorithm~\ref{algo:projected_grad_gain} lies in $\mathcal{S}\cap \Omega$.
We note that although the set $\mathcal{S}\cap \Omega$ can be nonconvex, \cite[Theorem~3]{Takakura_Sato_24} ensures that
$K^i$ remains in $\mathcal{S} \cap \Omega$ after projection onto the convex set $\Omega$.
According to \cite[Theorem~3]{Takakura_Sato_24},
for any $ K^0 \in \mathcal{S}\cap \Omega $ and for $\epsilon=O(1+\log\frac{1}{\delta_u})$,
the sequence 
$\{K^i\}_{i=1}^{T^\prime}$ converges to an $\epsilon$-stationary point with probability greater than $1-T^\prime(\delta+\delta_u + M_4)$, where $T^\prime$ denotes the number of iterations in Algorithm \ref{algo:projected_grad_gain}.
Since gradients cannot be estimated to arbitrary accuracy due to the feedforward estimation error, we can only obtain an $\epsilon$-stationary with $\epsilon=O(1+\log\frac{1}{\delta_u})$.
The iteration complexity of the algorithm is $T^\prime=O(1/\epsilon^2)$, and the total sample complexity is $2NN_\mathrm{sub}T^\prime\tau=O\left(\frac{1}{\epsilon^7} \log\frac{1}{\epsilon}\right)$.
Finally, we recall the definition of an $\epsilon$-stationary point below:

\begin{definition}\label{def:stationary_point}{\cite[Definition 1]{Takakura_Sato_24}}
    For a positive constant $\eta$ and $\epsilon$, $K$ is called an $\epsilon$-stationary point if $\|G_\eta (K)\|_{\mathrm{F}}\leq \epsilon$, where $K^+ \coloneqq \mathrm{proj}_\Omega (K-\eta\nabla f(K))$ and $G_\eta(K)\coloneqq\frac{1}{\eta}(K^+-K)$.
\end{definition}

\begin{algorithm}
 \begin{algorithmic}[1]
 \caption{Projected gradient method \cite[Algorithm 2]{Takakura_Sato_24} }\label{algo:projected_grad_gain}
            \REQUIRE $K^0 \in \mathcal{S}\cap \Omega$, $\hat{u}_0$, $N>0$,\,$N_\mathrm{sub}>0$, $\tau>0$, $r>0$, $T>0$, $\eta >0$, $ \epsilon>0$.
            \FOR{$i=1,\ldots$, $T$}
            \STATE Compute $\hat{\nabla} f(K^i;\hat{u}_0)$ using Algorithm \ref{algo:estimation_grad_cost}.
            \STATE $K^{i+1}=\mathrm{proj}_\Omega \left(K^i-\eta\hat{\nabla}f(K^i; \hat{u}_0)\right)$.
            \IF{$\|K^{i+1} -K^{i} \|_{\rm{F}}\leq \epsilon \eta $}
            \RETURN $K^i$.
            \ENDIF
            \ENDFOR
            \RETURN $K^T$.
        \end{algorithmic}
\end{algorithm}

\begin{remark}
    The assumption that $ K^0 \in \mathcal{S}\cap \Omega$ in Algorithm~\ref{algo:projected_grad_gain} is restrictive.
    However, such an assumption is standard in the literature on policy gradient methods for feedback gain optimization \cite{Convergence_Complexity_IEEE, Takakura_Sato_24}, and we adopt the same assumption here.
    Moreover, for certain classes of systems, stabilizing gains can be constructed in the model-free setting, as noted in Remark~\ref{rem:Pgain_stabilizing}.
    All PI gains in Algorithm~\ref{algo:estimation_grad_cost} and \ref{algo:projected_grad_gain} remain in $\mathcal{S}$ with high probability.
    Lemma~\ref{lem:bound_of_smoothing} in Appendix~\ref{appendix:proof_of_SampCompGain} establish that $K\pm rU^i$ in Algorithm~\ref{algo:estimation_grad_cost} lie in $\mathcal{S}$ if the smoothing parameter $r$ is sufficiently small.
    Furthermore, since $f(K^i)$ is monotonically decreasing \cite[Theorem 3]{Takakura_Sato_24}, $K^i$ stabilizes the closed-loop system under Assumption~\ref{assump:detectability}.
\end{remark}

\section{Comparison with model-based method}\label{sec:Compare_IndirectApproach}

In this section, we provide a theoretical comparison between the proposed model-free design of 2DOF PI controllers and a model-based method.
The model-based method refers to an approach in which we first identify the system parameters and subsequently design a controller based on the identified model.
We compare these two approaches in terms of sample and computational complexity.
The result is summarized in Table~\ref{tab:comparison_ModelBased}.
Throughout this section, we assume that the matrix $A$ in (\ref{eq:dynamics}) is Hurwitz for simplicity.
\par
\begin{table*}[b]
\rule{\textwidth}{0.6pt}
\centering
\caption{Comparison between the proposed method and the model-based method. }
\label{tab:comparison_ModelBased}
\setlength{\tabcolsep}{6pt}
\renewcommand{\arraystretch}{1.2}

\begin{tabular}{@{}l c cc@{}}
\toprule
 & \textbf{Feedforward} & \multicolumn{2}{c}{\textbf{PI gain}} \\
\cmidrule(lr){2-2} \cmidrule(lr){3-4}
 &\textbf{Sample size}   & \textbf{Sample size} & \textbf{Computational cost} \\
\midrule
\textbf{Proposed method}
 & $O(\log \frac{1}{\epsilon})$ for $\epsilon=O(1+\log^2\frac{1}{\delta_u})$ 
 & $\tilde{O}(\frac{1}{\epsilon^7})$ for $\epsilon=O(1+\log^2\frac{1}{\delta_u})$
 & cost of $\mathrm{proj}_\Omega$, or $O(p^2)$ \\
\textbf{Model-based method}
 & $\tilde{O}(\frac{1}{\epsilon^2})$
 & $\tilde{O}(\frac{1}{\epsilon^2})$
 & $O((n+p)^3)$\\
\bottomrule
\end{tabular} 
\end{table*}

In a model-based method, one first identifies the discrete-time model using input/output data collected from the continuous-time system (\ref{eq:dynamics}).
A controller is then designed for this identified discrete-time model, and implemented on the original continuous-time system (\ref{eq:dynamics}), via the zero-order hold method \cite{Hubert_Raphael_1972}.
The zero-order hold implementation yields the following 2DOF PI controller:
\begin{equation}\label{eq:2DOF_zoh}
\begin{split}
    u(t) &=K_{P}e_k + K_I z_k +u_0,\, (hk\leq t < h(k+1))\\
    z_{k+1}&=z_k +\,e_k,
\end{split}
\end{equation}
where $e_k\coloneqq e(hk)$ and $h>0$ denotes the constant sampling interval. 
When applied to the original system (\ref{eq:dynamics}), the closed-loop dynamics of $e_{x,k}\coloneqq e_x(hk)$ is given by
\begin{equation}\label{eq:dynamics_zoh_closedloop}
    \begin{split}
        \begin{pmatrix}e_{x,k+1}\\ z_{k+1}\end{pmatrix}& =
      \bar{A}_{K,\mathrm{d}}
        \begin{pmatrix}e_{x,k}\\ z_{k}\end{pmatrix} 
        +\bar{B}_\mathrm{d} (u_0-u^\star)
        +\begin{pmatrix}w_k + B_\mathrm{d} K_P v_k  \\ v_k\end{pmatrix} \\
        e_k&=-C e_{x,k}+v_k
    \end{split}
\end{equation}
where $v_k \coloneqq v(hk)$,
$w_k$ is zero-mean Gaussian noise with covariance matrix $W_\mathrm{d} = \int_0^h \exp(At)W\exp(A^\top t)dt$, and we define
\begin{equation*}
\begin{split}
&\bar{A}_{K,\mathrm{d}}=\bar{A}_\mathrm{d}-\bar{B}_\mathrm{d} K\bar{C},\,\bar{A}_\mathrm{d}=\begin{pmatrix}A_\mathrm{d} &O \\ -C A_\mathrm{d} & I\end{pmatrix} ,\,
\bar{B}_\mathrm{d}=\begin{pmatrix}B_\mathrm{d} \\O\end{pmatrix}\\
    &A_\mathrm{d}=\exp(Ah),\, B_\mathrm{d}=(\exp(Ah)-I)A^{-1}B.
    \end{split}
\end{equation*} 
Throughout this section, let $A_\mathrm{Id},\,B_\mathrm{Id},\,C_\mathrm{Id},\,W_\mathrm{Id},\,V_\mathrm{Id}$ denote
the identified counterparts of $A_\mathrm{d},\,B_\mathrm{d},\,C,\,W_\mathrm{d},\,V$, respectively.

\subsection{Feedforward Design }

To facilitate the comparison of sample complexity for the feedforward design, we first describe how the feedforward is computed in the model-based method.
In the steady-state of the zero-order hold implemented discrete-time model, it holds that
$x^\star=A_{d} x^\star + B_{d}u^\star, \, y^\star = C x^\star$, which is equivalent to (\ref{eq:equilibrium}).
Assumption~\ref{assump:dynamics} implies that the matrix $\begin{pmatrix}
        A_\mathrm{d}-I& B_\mathrm{d} \\ C &O
    \end{pmatrix}$ is of full row rank,
and hence the equilibrium feedforward $u^\star_\mathrm{d}$ is given by
\begin{equation}\label{eq:equib_discrete}
    \begin{pmatrix}x^\star_\mathrm{d} \\u^\star_\mathrm{d}\end{pmatrix} =
    \begin{pmatrix}
        A_\mathrm{d}-I& B_\mathrm{d} \\ C &O
    \end{pmatrix}^+
     \begin{pmatrix}0\\y^\star\end{pmatrix}.
\end{equation}
Similarly, the estimated feedforward $u^\star_\mathrm{Id}$ is obtained by
\begin{equation}\label{eq:equib_identified}
    \begin{pmatrix}x^\star_\mathrm{Id} \\u^\star_\mathrm{Id}\end{pmatrix} =
    \begin{pmatrix}
        A_\mathrm{Id}-I& B_\mathrm{Id} \\ C_\mathrm{Id} &O
    \end{pmatrix}^+
     \begin{pmatrix}0\\y^\star\end{pmatrix}.
\end{equation}
To evaluate the performance of the feedforward $u_0$, we consider the steady-state output error $\|y^\star -\mathbb{E}[y_{u_0}(\infty)]\|$, where $y_{u_0}(t)$ is the output trajectory under $u(t)=u_0$.
Since $\mathbb{E}[y_{u_0}(\infty)] =-CA^{-1}_\mathrm{d} B_\mathrm{d} u_0 $ and $y^\star=-CA^{-1}_\mathrm{d} B_\mathrm{d} u^\star_\mathrm{d}$ hold, it follows that
\begin{equation}\label{eq:performance_u0}
    \begin{split}
         \|y^\star - \mathbb{E}[y_{u^\star_{\mathrm{Id} }}(\infty)] \| &=
        \|CA_\mathrm{d}^{-1} B_\mathrm{d} u^\star_\mathrm{d}  - CA_\mathrm{d}^{-1} B_\mathrm{d} u^\star_{\mathrm{Id} } \| \\
        &\leq \|C A_\mathrm{d}^{-1} B_\mathrm{d} \|_2 \| u^\star_\mathrm{d} - u^\star_{\mathrm{Id} } \|.
    \end{split}
\end{equation}
We now bound the estimation error $ \| u^\star_\mathrm{d} - u^\star_{\mathrm{Id} } \|$.
Assume that the estimated errors of system matrices satisfy $\max \{ \|A_\mathrm{d}- A_{\mathrm{Id}}\|_2,\,\|B_\mathrm{d}-B_{\mathrm{Id}}\|_2,\, \|C-C_{\mathrm{Id}}\|_2 \} \leq \epsilon_{\mathrm{Id}}$,
where the freedom of similarity transformation is omitted for simplicity.
Then, \cite[Theorem 5.1]{Wedin_1973} implies $\| u^\star_\mathrm{d} - u^\star_{\mathrm{Id} } \| =O(\epsilon_{\mathrm{Id}})$.
Consequently, $ \|y^\star - \mathbb{E}[y_{u^\star_{\mathrm{Id} }}(\infty)] \| =O(\epsilon_\mathrm{Id})$ holds.
When the system matrices are identified via the Ho-Kalman method,
Oymak and Ozay \cite{Oymak_Ozay} showed that
the identified errors satisfy $\epsilon_\mathrm{Id} =O\left( \frac{1}{\sqrt{N_\mathrm{Id}} } \right)$, ignoring log factors,
where $N_\mathrm{Id}$ is the length of sampled input/output data.
Consequently, to achieve $\|y^\star - \mathbb{E}[y_{u^\star_{\mathrm{Id} }}(\infty)] \|=O(\epsilon)$,
the model-based method requires the $O \left(1/ \epsilon^2  \right)$ samples.\par
The sample complexity advantage of the proposed method over the model-based method depends on the performance level required for the steady-state output.
Using the same argument as in (\ref{eq:performance_u0}), we have $\|y^\star - \mathbb{E}[y_{\hat{u}_0 } (\infty)] \| \leq \|CA^{-1}B\|_2 \|\hat{u}_0 -u^\star\|$.
According to Theorem~\ref{thm:size_of_tau_u}, for any $\epsilon_u$, if we set the control horizon $\tau_u=O(\log\frac{1}{\epsilon_u})$,
Algorithm~\ref{algo:DataMatrix} yields the feedforward $\hat{u}_0$ satisfying $\|\hat{u}_0 -u^\star\| \leq \epsilon_u+\bar{S}(\delta_u)$ with probability greater than $1-\delta_u-M_4$, where $\bar{S}(\delta_u)$ and $M_4$ are defined in (\ref{eq:def_barS}) and (\ref{eq:def_M4}), respectively.
Ignoring log factors $\log\frac{1}{\delta_u}$, we have 
$\bar{S}(\delta_u)\simeq 
\frac{2 \sqrt{2m}\|C\|_2 \|A_K^{-1}\|_2 \|B\|_2
\left(
    2\mathrm{tr}\left(C \Sigma C^\top \right)
    +9(\sqrt{2c}+2)\mathfrak{G}^2 \|C \Sigma C^\top \|_{\mathrm{F}} /c^2
        \right) 
        }{\sigma_p(CA_K^{-1}B)}
        \eqqcolon M_5$,
where $M_5$ is the dominant term discussed in Remark~\ref{rem:Uncertainty_FF}.
If a performance level of $\epsilon=\|y^\star - \mathbb{E}[y_{\hat{u}_0 } (\infty)] \|
\simeq \|CA^{-1}B\|_2 M_5$ is acceptable,
Algorithm~\ref{algo:DataMatrix} obtains such a feedforward with the control horizon $O\left(\log \left(1/\epsilon\right)\right)$,
which is of smaller order than $O \left( 1/ \epsilon^2  \right)$ samples required by the model-based method.
On the other hand, if we pursue a higher-accuracy feedforward
$\epsilon=\|y^\star - \mathbb{E}[y_{u_0} (\infty)] \|
\ll \|CA^{-1}B\|_2 M_5 $,
we need to employ the model-based method with $O \left(1/ \epsilon^2  \right)$ samples.
Finally, we remark that such extremely high performance feedforward is not essential in a 2DOF PI controller because 
integral controller guarantees $\mathbb{E}[y(t)]\to y^\star$.

\subsection{PI gain tuning }
For the purpose of the comparison with the model-based method, we formulate the PI gain tuning problem for the identified model, and analyze it in terms of sample and computational complexity.
As in the previous subsection, we first introduce the PI gain tuning problem for a discrete-time system.
The problem is formulated as
\begin{equation}\label{prob:PI_discrete}
    \begin{split}
        \min_{K \in \Omega} \quad & f_\mathrm{d} (K) \coloneq
\lim_{T_d\to \infty}\frac{1}{T_d}\mathbb{E}\left[ \sum_{k=1}^{T_d} e_k^\top Q_1 e_k+ z_k^\top Q_2 z_k 
\right] \\
        \mathrm{s.t.} \quad &(\ref{eq:dynamics_zoh_closedloop}),
        u_0=u^\star_\mathrm{d},\,
        \,\text{$\bar{A}_{K,\mathrm{d}}$ is Schur stable,}
    \end{split}
\end{equation}
where $\bar{A}_{K,\mathrm{d}}$ is the closed-loop matrix $\bar{A}_{K,\mathrm{d}}=\bar{A}_\mathrm{d}-\bar{B}_\mathrm{d} K\bar{C}$ and $u^\star_\mathrm{d}$ is defined in (\ref{eq:equib_discrete}).
The cost function $f_\mathrm{d} (K)$ can be written as
$f_\mathrm{d}(K)=\mathrm{tr}(X_\mathrm{d} Q^\prime)+\mathrm{tr}(V Q_1)$, where $X_\mathrm{d}$ is the solution to the Lyapunov equation $\bar{A}_{K,\mathrm{d} } X_\mathrm{d} \bar{A}_{K,\mathrm{d} }^\top -X_\mathrm{d} + \begin{pmatrix} W_d + B_\mathrm{d} K_P V (B_\mathrm{d} K_P)^\top & B_\mathrm{d} K_P V \\ (B_\mathrm{d} K_P V)^\top& V\end{pmatrix}=O$.
Finally, we define the corresponding PI gain tuning problem for identified model:
\begin{equation}\label{prob:PI_Identified}
    \begin{split}
        \min_{K \in \Omega} \quad & f_{\mathrm{Id}}(K) =\mathrm{tr}(X_{\mathrm{Id}} Q^\prime_{\mathrm{Id}}) +\mathrm{tr}(Q_1 V_{\mathrm{Id}}) \\
        \mathrm{s.t.} \quad &\ \bar{A}_{K,\mathrm{Id}} X_{\mathrm{Id}} \bar{A}_{K,\mathrm{Id}}^\top - X_{\mathrm{Id}} \\
        &+\begin{pmatrix} W_\mathrm{Id} + B_{\mathrm{Id}} K_P V_{\mathrm{Id}} (B_{\mathrm{Id}} K_P)^\top & B_{\mathrm{Id}} K_P V_{\mathrm{Id}} \\ (B_{\mathrm{Id}} K_P V_{\mathrm{Id}})^\top& V_{\mathrm{Id}}\end{pmatrix} =O,\\
        & \text{$\bar{A}_{K,\mathrm{Id}} \coloneqq \bar{A}_\mathrm{Id}-\bar{B}_\mathrm{Id} K \bar{C}_\mathrm{Id}$ is Schur stable.}
    \end{split}
\end{equation} \par

We next compare the proposed method with the model-based method in terms of sample complexity.
In the model-based method, 
we suppose that the identified system matrices satisfy
\begin{equation}\label{eq:assumption_identification}
    \begin{split}
        \max \{& \|A_\mathrm{d}- A_{\mathrm{Id}}\|_2,\,\|B_\mathrm{d}-B_{\mathrm{Id}}\|_2,\, \|C-C_{\mathrm{Id}}\|_2 ,\\
         &\|W_\mathrm{d}-W_{\mathrm{Id}} \|_2 
         ,\,\|V-V_{\mathrm{Id}}\|_2 \} 
        \leq \epsilon_\mathrm{Id},
    \end{split}
\end{equation}
where the freedom of similarity transformation is omitted for simplicity.
Then Theorem~\ref{thm:degradation_indirect} in Appendix~\ref{appendix:indirect_approach} implies that any
$\epsilon_\mathrm{Id}$-stationary point of (\ref{prob:PI_Identified}) is an $O(\epsilon_\mathrm{Id})$-stationary point of (\ref{prob:PI_discrete}), where an $\epsilon_\mathrm{Id}$-stationary point is defined in Definition~\ref{def:stationary_point}.
A rigorous finite-sample analysis of the model-based method is challenging, because, to the best of our knowledge, 
little is known about nonasymptotic identification errors for the covariance matrices $W_\mathrm{d}$, $V_\mathrm{d}$.
For the sake of comparison, however, suppose that some system identification method---for example, the Numerical Algorithms for Subspace State Space identification (N4SID) \cite{Overschee_2012_etal}---provides identified matrices satisfying (\ref{eq:assumption_identification}) with $\epsilon_\mathrm{Id}=O\left(\frac{1}{\sqrt{N_\mathrm{Id}} }\right)$ ignoring log factors, where $N_\mathrm{Id}$ is the length of the input/output data.
This rate matches the known sample complexity result of Ho-Kalman method for identifying only $A_\mathrm{d},\,B_\mathrm{d},\,C_\mathrm{d}$ \cite{Oymak_Ozay}.
Under this assumption, the model-based method requires $N_\mathrm{Id}=O(1/\epsilon_\mathrm{Id}^2)$ to obtain an $O(\epsilon_\mathrm{Id})$-stationary point of (\ref{prob:PI_discrete}).
The result in Section~\ref{subsec:Method_GainTuning} implies that for any $\epsilon=O(1+\log\frac{1}{\delta_u})$,
Algorithm~\ref{algo:projected_grad_gain} attains $\epsilon$-stationary point with the sample complexity $O\left(\frac{1}{\epsilon^7}\right)$, ignoring the log factor.
Thus, the model-based method achieves a more favorable worst-case sample complexity.
\par

In terms of computational cost, the proposed method is more efficient than the model-based method.
In the model-based method, applying the projected gradient descent to (\ref{prob:PI_Identified}) requires computing the gradient $\nabla f_{\mathrm{Id}}(K)$ at each iteration.
Following the derivation in \cite[Proposition 1]{Martensson_2012}, $\nabla f_{\mathrm{Id}}(K)$ is given by
\begin{equation}\label{eq:gradient_discrete_id}
    \begin{split}
        &\quad\nabla f_{\mathrm{Id}}(K) \\
        &= -2\bar{B}_\mathrm{Id}^\top Y_\mathrm{Id} \bar{A}_{K,\mathrm{Id}} X_\mathrm{Id} \bar{C}_\mathrm{Id}^\top
        +  2\bar{B}_\mathrm{Id}^\top Y_\mathrm{Id} \begin{pmatrix}
            B_\mathrm{Id} K V_\mathrm{Id} &O \\ V_\mathrm{Id} &O
        \end{pmatrix}, 
    \end{split}
\end{equation}
where $Y_\mathrm{Id}$ is the solution to $\bar{A}_{K,\mathrm{Id} }^\top Y_\mathrm{Id}  \bar{A}_{K,\mathrm{Id}} -Y_\mathrm{Id} +Q^\prime_\mathrm{Id}=O$.
Computing $\nabla f_{\mathrm{Id} }(K)$ requires not only several matrix multiplications but also solving two Lyapunov equations, resulting in a computational complexity of $O((n+p)^3)$ \cite{Barraud_1977},
which is prohibitive when the identified state dimension is large.
In contrast, our proposed method, Algorithm~\ref{algo:estimation_grad_cost} and~\ref{algo:projected_grad_gain},
has the dominant computational cost when computing $\hat{f}^{i,j,k}$ $\left(O(p^2) \right)$ and performing projection.
These costs are substantially smaller than the model-based method when the input and output dimensions $m$ and $p$ are small, which is typical in practical control applications.

\section{numerical experiments}\label{sec:numerical_experiment}
In this section, we present numerical experiments to demonstrate the effectiveness of the proposed method 
 and to compare its performance with that of the model-based method.
Throughout the numerical experiments, we considered the systems of dimension $n=20$ and $m=p=2$. 
The system matrices were constructed as $A=J-R$, $B=3\times\mathrm{randn}(n,m)$, $C=B^\top$, $W=10^{-2}I$ and $V=5\times 10^{-4} I$ where
$J=(\bar{J}-\bar{J}^\top)/2$, $ \bar{J}=\mathrm{randn}(n,n)$, $ R=\bar{R}\bar{R}^\top$, $\bar{R}=2\mathrm{randn}(n,n)$, and $\mathrm{randn}(a,b)$ denotes an $a\times b$ matrix with i.i.d. standard normal entries.
The distribution $\mathcal{D}$ was set to the uniform distribution over $[3,\,3]^{n}$.
The output set-point was fixed to $y^\star=\begin{pmatrix}
    5 & 5 
\end{pmatrix}^\top$.
In the model-based method, system identification is performed using the n4sid function in ControlSystemIdentification.jl package, and we set the sampling period $h=10^{-2}$.

\subsection{Feedforward Design}\label{subsec:experiment_FF} 
We numerically evaluated the performance of Algorithm \ref{algo:DataMatrix} and compared it with the model-based method.
We set the control horizon $\tau_u=108.93$, which corresponds to the value given by the right hand side of (\ref{eq:bound_of_tau_u}) with $\epsilon_u=10^{-3}$, and we chose $K_P^\prime=10^{-3}I$.
In the model-based method, we identified the system matrices from a single trajectory with $32{,}680$ time steps, which corresponds to $\frac{(m+1)\tau_u}{h}$,
and obtained the identified matrices $A_\mathrm{Id},\,B_\mathrm{Id},\, C_\mathrm{Id}$.
Then the feedforward $u^\star_\mathrm{Id}$ was subsequently computed by (\ref{eq:equib_identified}).
Both Algorithm~\ref{algo:DataMatrix} and the model-based method were run 10 times,
and their performance was assessed in terms of the steady-state output relative error $\frac{\|y^\star -\mathbb{E}[y_{u_0}(\infty)]\|}{\|y^\star \|} $, 
as illustrated in Fig.~\ref{fig:Boxplot_SteadyError_FF}.
The proposed method consistently achieved smaller steady-state output relative errors.
\begin{figure}[!t]
    \centering
    \includegraphics[width=0.65\columnwidth]{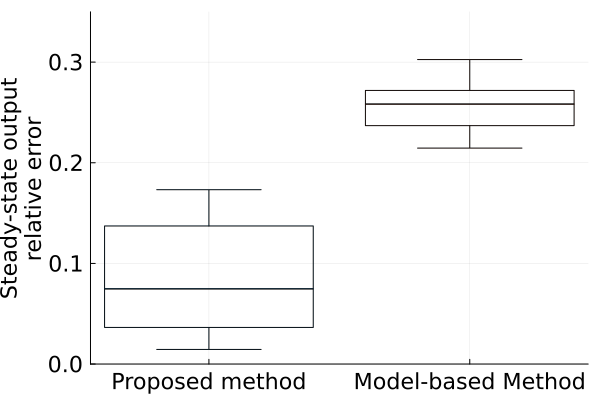} 
    \caption{Boxplots of the steady-state output relative errors $\frac{\|y^\star -\mathbb{E}[y_{u_0}(\infty)]\|}{\|y^\star \|} $ over 10 trials. }
    \label{fig:Boxplot_SteadyError_FF}
\end{figure}

\subsection{PI Gain Tuning}\label{subsec:experiment_PIgain}
We evaluated the performance of Algorithm~\ref{algo:projected_grad_gain} and numerically compared it with the model-based method. 
The numerical experiment in this subsection was carried out on the same system in Section~\ref{subsec:experiment_FF}.
The constraint set was chosen as $\Omega=\{ K=\begin{pmatrix} K_P &K_I\end{pmatrix}\,|\, \|K_P\|_\mathrm{F}\leq 5,\, \|K_I\|_\mathrm{F}\leq 5\}$.
The inputs to Algorithm \ref{algo:projected_grad_gain} were set to $K^0=\textcolor{black}{\begin{pmatrix}I & I \end{pmatrix} }$,
$N=15$, $N_\mathrm{sub}=20$, $ \tau=10$, $ r=0.09$, $ T=20$, $ \eta= 10^{-3}$, and $ \hat{u}_0$,
where $\hat{u}_0$ was computed in the setting of the preceding subsection.
We skipped the stopping test in Step~4 of Algorithm~3 to fix the number of iterations, 
and this allowed us to provide consistent comparisons with the model-based method under the fixed sample size.
The cost function weights were chosen as $Q_1=200I$ and $Q_2=20I$.
In the model-based method, the system matrices were identified from a single trajectory with $12{,}032{,}680$ time steps, which corresponded to $\frac{2NN_\mathrm{sub}T\tau+(m+1)\tau_u}{h}$, and then Problem~(\ref{prob:PI_Identified}) was solved.
The cost function weights were scaled to $Q_1=10^{-1}I$ and $Q_2=10^{-2}I$.
Problem~(\ref{prob:PI_Identified}) was solved using the projected gradient descent with the step size $10^{-5}$, running for $10^5$ iterations.
The initial point was chosen as $K^0=\begin{pmatrix}10^{-2}I & 10^{-2}I \end{pmatrix}$.
While updating the PI gain in the projected gradient descent, the PI gain remained in the set of stabilizing gains by ensuring the PI gain update remained sufficiently small, which was achieved by carefully scaling the cost weights $Q_1$, $Q_2$ and choosing the sufficiently small step size.
The feedforward $u^\star_\mathrm{Id} $ was computed by (\ref{eq:equib_identified}).

\begin{remark}
    Several inequalities in the proof of Theorem~\ref{thm:Samp_comp_gain}---(\ref{eq:size_OuterLoop}), (\ref{eq:bound_smoothing}), and (\ref{eq:prob_ineq_innerloop})---provide
    theoretical bounds on the parameters $r$, $N$ and $N_\mathrm{sub}$, thereby offering a principled way to select them.
    However, the resulting numerical values are exceedingly conservative due to the loose inequalities in the analysis.
    Consequently, these values are impractical for implementation, 
    and we employed smaller values of $N,\,N_\mathrm{sub}$ and a larger value of $r$ in the numerical experiments.
    Although the theoretical result does not yield usable nonasymptotic constants,
    it nevertheless provides meaningful insights in the asymptotic regime.
    For example, the inner loop sample size $N_\mathrm{sub}$ should be larger than the outer loop sample size $N$ in Algorithm~\ref{algo:estimation_grad_cost}.
\end{remark}

The proposed method achieved better control performance than the model-based method.
First, we compare the PI gain performance. 
To this end, we introduce a performance metric
$\bar{f}(K)=\frac{1}{N_\mathrm{eval}} \sum_{i=1}^{N_\mathrm{eval} }\left( \frac{1}{\tau_\mathrm{eval}} \int_0^{\tau_\mathrm{eval}} e(t)^\top Q_1 e(t)+ z(t)^\top Q_2 z(t) dt \right)$, 
where $e(t)$ and $z(t)$ were trajectories under the input (\ref{eq:2DoF}) in the proposed method or (\ref{eq:2DOF_zoh}) in the model-based method.
To isolate the effect of the uncertainty of the feedforward, the feedforward $u_0$ was chosen as $u^\star$.
Each method was run for 10 trials, and $\bar{f}(K)$ was computed in each trial with $N_\mathrm{eval}=200,\,\tau_\mathrm{eval}=300 $.
The result was shown in Fig.~\ref{fig:Boxplot_PI_Gain}.
The proposed method consistently achieved smaller values of $\bar{f}(K)$.
Secondly, we compared the 2DOF PI controller performance by examining the simulated output trajectories $y(t)$.
We simulated the system (\ref{eq:dynamics}) under the input (\ref{eq:2DoF}) in the proposed method or (\ref{eq:2DOF_zoh}) in the model-based method, 
using the corresponding feedforward inputs $\hat{u}_0$ and $u^\star_\mathrm{Id} $, respectively.
The results are demonstrated in Fig.~\ref{fig:Trajectory_2DOF}.
The proposed method exhibited smaller overshoot, that is, achieved better transient performance.

\begin{figure}[!t]
    \centering
    \includegraphics[width=0.65\columnwidth]{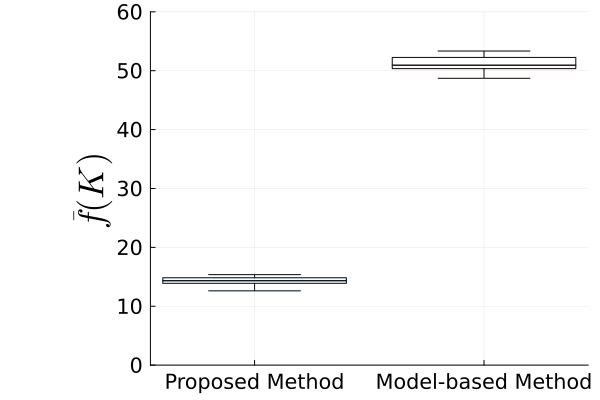} 
    \caption{Boxplots of $\bar{f}(K)$ over 10 trials with $N_\mathrm{eval}=200,\,\tau_\mathrm{eval}=300 $.
     }
    \label{fig:Boxplot_PI_Gain}
\end{figure}

\begin{figure}[!t]
    \centering
    \subfloat[]{\includegraphics[width=0.5\columnwidth]{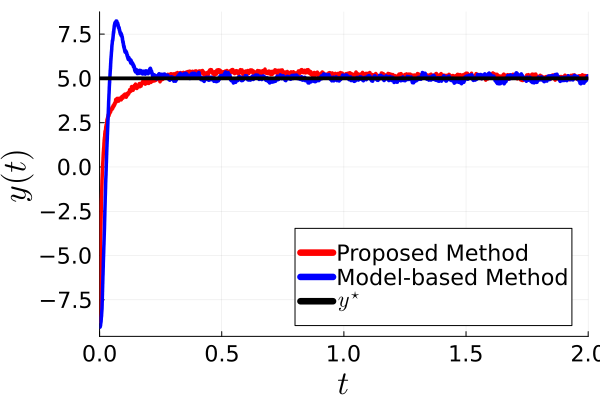}}\hfill
    \subfloat[]{\includegraphics[width=0.5\columnwidth]{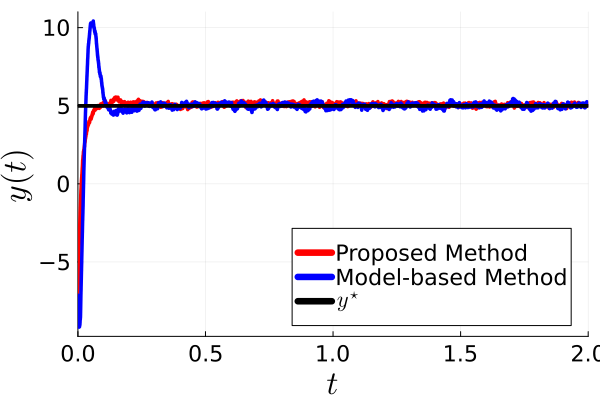}} 
    \caption{
    Trajectories of the output $y(t)$. The red line represents the trajectory under the controller designed by the proposed method.
    The blue line shows the trajectory under the controller designed by the model-based method.
    The black line corresponds to the desired output setpoint $y^\star$.
    (a) The first component of $y(t)$. (b) The second component of $y(t)$. }
    \label{fig:Trajectory_2DOF}
\end{figure}

\subsection{ Results across multiple systems }

In this subsection, we demonstrate that the proposed method consistently outperforms the model-based method across different system parameters.
We independently generated 10 systems as described at the outset of this section,
and conducted 10 trials for each system.\par
In the feedforward comparison, $\hat{u}_0$ and $u^\star_\mathrm{Id}$ were computed for each trial and each system.
All inputs to Algorithm~\ref{algo:DataMatrix} were kept fixed as in Section~\ref{subsec:experiment_FF}, 
except for $\tau_u$.
The value of $\tau_u$ was chosen as the right hand side of (\ref{eq:bound_of_tau_u}), which varied across systems, with $\epsilon_u=10^{-3}$.
Accordingly, the number of time steps for the system identification was set to $\frac{(m+1)\tau_u}{h}$.
For each system, we evaluated the average of steady-state output relative errors $\frac{\|y^\star -\mathbb{E}[y_{u_0}(\infty)]\|}{\|y^\star \|} $ over 10 trials, as illustrated in Fig.~\ref{fig:Boxplot_Average_FF}.
The proposed method consistently achieved smaller values of average steady-state output relative error.\par

In the PI gain comparison, the PI gains were computed for each trial and each system following the procedure in Section~\ref{subsec:experiment_PIgain}.
We compared both methods by evaluating $\frac{\bar{f}_\mathrm{MF}(K) }{ \bar{f}_\mathrm{MB}(K)}$, where $\bar{f}_\mathrm{MF}(K)$ and $\bar{f}_\mathrm{MB}(K)$ denote the average values of $\bar{f}(K)$ over 10 trials under the proposed and model-based method, respectively.
For each system, we computed the value of $\frac{\bar{f}_\mathrm{MF}(K) }{ \bar{f}_\mathrm{MB}(K)}$ with $N_\mathrm{eval}=200,\,\tau_\mathrm{eval}=300 $, as shown in Fig.~\ref{fig:Boxplot_Average_FF}.
In all 10 systems, $\frac{\bar{f}_\mathrm{MF}(K) }{ \bar{f}_\mathrm{MB}(K)}$ was below $0.4$, indicating that the proposed method consistently outperformed the model-based method.

\begin{figure}[!t]
    \centering
    \includegraphics[width=0.65\columnwidth]{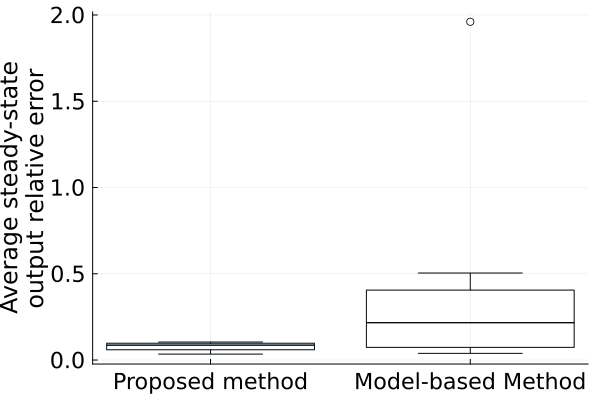} 
    \caption{Boxplots of the average steady-state output relative errors across 10 systems. }
    \label{fig:Boxplot_Average_FF}
\end{figure}
\begin{figure}[!t]
    \centering
    \includegraphics[width=0.65\columnwidth]{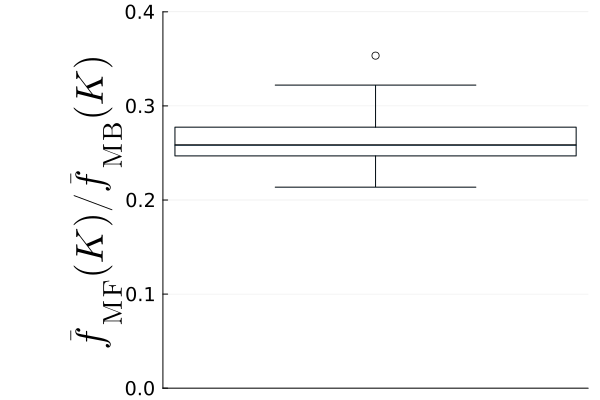} 
    \caption{Boxplot of the ratio of the PI gain performance metric $\frac{\bar{f}_\mathrm{MF}(K) }{ \bar{f}_\mathrm{MB}(K)}$ across 10 systems. }
    \label{fig:Boxplot_Average_PIGain}
\end{figure}

\section{Conclusion}\label{sec:concluding_remarks}
This paper has proposed a model-free design method for a 2DOF PI controller for MIMO LTI systems and has established its theoretical analysis. We have also provided theoretical and numerical comparison with the model-based method.
For feedforward design, we have tackled the system subject to Gaussian noise, which has been ignored in previous works, and have established analysis on the required time horizon and the feedforward estimation error.
For PI gain tuning, we have formulated an optimization problem and have analyzed sample complexity of a zeroth-order method.
Furthermore, we have quantified how the estimation inaccuracy of the feedforward affects the design of the PI controller.
Compared with the model-based method, the proposed method exhibits more favorable sample complexity when moderate feedforward accuracy is sufficient, although the model-based method is required for the high-accuracy feedforward.
In PI gain tuning, the proposed method achieves lower computational burden despite larger sample requirement.
Finally, numerical experiments have demonstrated that the proposed method has achieved superior control performance.\par
Future work includes developing a feedforward design method for nonlinear systems via the Koopman lifting technique \cite{Brunton_2021,Otto_Clarence_2021},
and improving the PI gain tuning method to reduce the sample complexity.
In addition, we conjecture that the iterates generated by Algorithm~\ref{algo:projected_grad_gain} converge to a local minimum.
This conjecture is motivated by a previous work \cite{Zhaolin_Yujie_na_2023} establishing that zeroth-order perturbed gradient descent with two-point estimators escapes strict saddle points, 
together with the observation that projected gradient descent behaves like vanilla gradient descent in the interior of the constraint $\Omega$, and that inexact cost evaluations can be interpreted as implicit perturbations.
A rigorous analysis is left for future work.
\section*{Acknowledgment}
This work was supported by the Japan Society for the Promotion of Science KAKENHI under Grant 23K28369.

\appendix

\subsection{Non-convexity of the cost function $f(K)$ }\label{appendix:Non-convexity}
We emphasize that the cost function $f(K)$ can be non-convex. Let $A=0.1$, $ B=1$, $C=1$, $W=0.5,$ and $V=Q_1=Q_2=1$. 
In this case, the cost function becomes $f(K)=\frac{(K_I+1)(0.5K_I+K_P^2K_I+1) }{K_I(K_P-0.1)} +\frac{K_P-0.1}{2K_I} +K_P$.
When we set $K^1=\begin{pmatrix}1 & 4\end{pmatrix}$ and $K^2=\begin{pmatrix}4 & 1.6\end{pmatrix}$, we have $\frac{f(K^1)+f(K^2)}{2}=12.49<13.55=f(\frac{K^1+K^2}{2})$, which indicates that $f(K)$ is non-convex.

\subsection{Well-definedness of Algorithm~\ref{algo:DataMatrix}}\label{appendix:well-definedness_FF}
The following lemma ensures that $E^{+}$ is well-defined with probability $1$, and consequently $\hat{u}_0$ is also well-defined.
\begin{lemma}\label{lem:fullrank_E}
    The matrix $E$ computed in (\ref{eq:DataMatrix_tau}) is full row rank with probability $1$.
\end{lemma}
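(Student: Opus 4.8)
The plan is to show that the law of $E$ is absolutely continuous with respect to the Lebesgue measure on $\mathbb{R}^{p\times m}$, so that the event of rank deficiency, being supported on a Lebesgue-null set, has probability $0$. Since $p\le m$, full row rank is equivalent to $\rank E=p$, and the complementary set $\mathcal{R}\coloneqq\{M\in\mathbb{R}^{p\times m}\mid \rank M<p\}$ is the common zero locus of the $p\times p$ minors of $M$, hence a proper algebraic subvariety of $\mathbb{R}^{pm}$ and in particular Lebesgue-null. It therefore suffices to prove $\mathbb{P}(E\in\mathcal{R})=0$, which follows once $E$ is shown to admit a density on $\mathbb{R}^{p\times m}$.

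First I would write the columns of $E$ explicitly. Using $e(t)=-Ce_x(t)+v(t)$ together with $e^{\prime i}=e^i-e^0$, the $i$-th column of $E$ is
\[
e^{\prime i}(\tau_u)=-C\,e_x^{\prime i}(\tau_u)+v^i(\tau_u)-v^0(\tau_u),
\]
where $e_x^{\prime i}(\tau_u)=e_x^i(\tau_u)-e_x^0(\tau_u)$ and the superscript indexes the $m+1$ runs of Algorithm~\ref{algo:DataMatrix}, each carried out with its own independent process and measurement noise and its own independent draw $x(0)\sim\mathcal{D}$. The decisive structural point is that every column of $E$ carries the additive measurement-noise sample $v^i(\tau_u)-v^0(\tau_u)$ at the final readout.

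The key step is to condition on the $\sigma$-algebra $\mathcal{F}$ generated by the state errors $\{e_x^{\prime i}(\tau_u)\}_{i=1}^m$ together with $v^0(\tau_u)$, and to treat the samples $\{v^i(\tau_u)\}_{i=1}^m$ as the remaining randomness. Under the additive sensor-noise model these samples are independent across the $m$ runs and independent of the conditioning data, and their vectorization is Gaussian with covariance $I_m\otimes V$, which is positive definite because $V\succ O$. Consequently, given $\mathcal{F}$, the matrix $E$ equals this nondegenerate $\mathbb{R}^{p\times m}$-valued Gaussian shifted by the $\mathcal{F}$-measurable matrix whose $i$-th column is $-C\,e_x^{\prime i}(\tau_u)-v^0(\tau_u)$, and a shift of a nondegenerate Gaussian possesses a density on $\mathbb{R}^{p\times m}$. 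Hence $\mathbb{P}(E\in\mathcal{R}\mid\mathcal{F})=0$ almost surely, and taking expectations over $\mathcal{F}$ gives $\mathbb{P}(E\in\mathcal{R})=0$; equivalently, $E$ is full row rank with probability $1$, and $E^{+}$ is well-defined.

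The main obstacle is justifying the independence used in the conditioning, namely that the measurement noise entering the final readout is independent of the state trajectory $e_x^{\prime i}$ even though the sensor noise is fed back through $K_P^\prime$ over $[0,\tau_u)$; this is precisely where the additive measurement-noise model is essential, since that independent full-rank Gaussian component renders the conditional law of $E$ nondegenerate irrespective of the (possibly degenerate) distribution of $CE_x$. If one prefers not to rely on a pointwise sensor sample, the same null-set conclusion can instead be reached from the absolute continuity injected by the process noise through the reachability Gramian associated with $A_K$ and $W^{1/2}$, or by the initial-state law $\mathcal{D}$; the measurement-noise route is the most direct because it requires no nondegeneracy of the state dynamics.
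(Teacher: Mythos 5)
Your proposal is correct and follows essentially the same route as the paper's proof: the rank-deficient matrices form a Lebesgue-null set, and the (absolutely continuous) Gaussian law of $E$ assigns it probability zero. Your write-up is in fact more careful than the paper's one-line argument, since you pinpoint the source of non-degeneracy (the independent readout noise with $V\succ O$) and handle the dependence among the columns of $E$ by conditioning, whereas the paper simply asserts that Gaussianity of the columns gives full support and leaves the step from full support to absolute continuity implicit.
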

\begin{proof}
    Each column of $E$ is a Gaussian random vector, which means that the support of $E$ is $\mathbb{R}^{n\times p}$.
    Since the set $\{E \in\mathbb{R}^{n\times p}\,|\, \text{The rows of $E$ are linearly dependent} \}$
    is a measure-zero subset of $\mathbb{R}^{n\times p}$,
    $E$ is full row rank with probability $1$.
\end{proof}

\subsection{Proof of Theorem~\ref{thm:size_of_tau_u}} \label{appendix:proof_of_SizeTauU}
Before presenting the proof of Theorem~\ref{thm:size_of_tau_u}, 
we introduce the covariance matrix of $e_x(t)$, which is employed in later, as
\begin{equation}\label{eq:var_Pcont}
\begin{split}
   &\Sigma_t\coloneqq \text{Var}\left(  e_x(t)\right)
    = \exp{(A_K t)} \Sigma_0 \exp{(A_K^\top t)} \\
   &\quad +\int_0^t  \exp{(A_K t)}
   ( W + B K_P V (B K_P)^\top )
   \exp{(A_K^\top t)} d\tau.
\end{split}
\end{equation} \par
We first establish the auxiliary lemma.

\begin{lemma}\label{lem:E-Estar}
    Define $E$ as (\ref{eq:DataMatrix_tau}).
    Set $\tau_u\geq \|Z\|_2\log M_2$.
    For any $\delta_u>0$, the following inequality holds with probability greater than $1-\delta_u$:
    \begin{equation}\label{eq:bound_E-Estar}
    \begin{split}
        &\| E -E^\star \|_\mathrm{F}\leq S_m(\delta_u)\\   
        &+\sqrt{mp} \| C\|_2 \|A_K^{-1}\|_2 \|B\|_2 \frac{\|Z\|_2}{\lambda_{\min}(Z)} \exp{\left( - \frac{\tau_u}{2\|Z \|_2}\right)}
        \end{split}
    \end{equation}
    where $Z,\,\mathfrak{G},\,c$ are defined in Theorem~\ref{thm:size_of_tau_u},
    and $S_m(\delta_u)$ is defined in (\ref{eq:Sm_FF}).
\end{lemma}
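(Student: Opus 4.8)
The plan is to split $E-E^\star$ into a deterministic transient (bias) part and a zero-mean stochastic fluctuation, bound each in Frobenius norm, and recombine by the triangle inequality. Writing $\delta_x^i \coloneqq e_x^i(\tau_u)-\mathbb{E}[e_x^i(\tau_u)]$ for the state-error fluctuation in the $i$-th simulation and recalling $e(t)=-Ce_x(t)$ and $E^\star=(\mathbb{E}[e^{\prime 1}(\infty)]\ \cdots\ \mathbb{E}[e^{\prime m}(\infty)])$, I would decompose each column as
\begin{equation*}
e^{\prime i}(\tau_u)-\mathbb{E}[e^{\prime i}(\infty)]
=\big(e^{\prime i}(\tau_u)-\mathbb{E}[e^{\prime i}(\tau_u)]\big)
+\big(\mathbb{E}[e^{\prime i}(\tau_u)]-\mathbb{E}[e^{\prime i}(\infty)]\big),
\end{equation*}
so that $E-E^\star=\Xi+\mathcal{B}$, where $\Xi$ collects the fluctuations $\xi^i$ and $\mathcal{B}$ the biases $b^i$.

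For the bias, I would substitute the mean formula (\ref{eq:mean_Pcont}) into $e(t)=-Ce_x(t)$. Because the baseline run $e^0$ shares the same initial-state mean $\mathbb{E}[e_x(0)]$ and the same $u^\star$ as the $i$-th run, these contributions cancel in $e^{\prime i}=e^i-e^0$, leaving $b^i=-C\exp(A_K\tau_u)A_K^{-1}Bu_0^i$; since $(u_0^1\ \cdots\ u_0^m)=I$ this gives $\mathcal{B}=-C\exp(A_K\tau_u)A_K^{-1}B$. Submultiplicativity together with the coarse bound $\|M\|_{\mathrm{F}}\le\sqrt{mp}\,\|M\|_2$ for $M\in\mathbb{R}^{p\times m}$ yields $\|\mathcal{B}\|_{\mathrm{F}}\le\sqrt{mp}\,\|C\|_2\|A_K^{-1}\|_2\|B\|_2\,\|\exp(A_K\tau_u)\|_2$, and the Lyapunov certificate $Z$ furnishes $\|\exp(A_K\tau_u)\|_2\le\frac{\|Z\|_2}{\lambda_{\min}(Z)}\exp(-\tau_u/(2\|Z\|_2))$ (the standard estimate, coarsened via $\sqrt{a}\le a$ for $a\ge1$ to drop the square root on $\|Z\|_2/\lambda_{\min}(Z)$). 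This is exactly the second summand of (\ref{eq:bound_E-Estar}), and it is deterministic.

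The stochastic term is the crux. Each $\xi^i=-C\delta_x^i+C\delta_x^0$, and because Step~1 of Algorithm~\ref{algo:DataMatrix} is executed once, the common baseline fluctuation $\delta_x^0$ couples all $m$ columns of $\Xi$; the vectors $\delta_x^0,\delta_x^1,\dots,\delta_x^m$ are jointly Gaussian and mutually independent, each with covariance $\Sigma_{\tau_u}$ from (\ref{eq:var_Pcont}). I would therefore view $\|\Xi\|_{\mathrm{F}}^2=\sum_{i=1}^m\|\xi^i\|^2$ as a quadratic form in the stacked Gaussian vector $(\delta_x^0,\dots,\delta_x^m)$, whose mean is $2m\,\tr(C\Sigma_{\tau_u}C^\top)$, and apply a Bernstein-type (Hanson--Wright) concentration inequality for sub-exponential quadratic forms. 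Expressing the deviation through the sub-Gaussian norm $\mathfrak{G}$ of a standard normal and the variance proxy $\|C\Sigma_{\tau_u}C^\top\|_{\mathrm{F}}$ gives a two-sided tail of the form $2\exp(-\cdots)$, whose inversion at level $\delta_u$ produces the $\log(2/\delta_u)$ factor appearing in $S_m$.

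The condition $\tau_u\ge\|Z\|_2\log M_2$ then lets me replace the finite-horizon covariance $\Sigma_{\tau_u}$ by the steady-state $\Sigma$ of (\ref{eq:Pcont_Lyapunov}). Using $\Sigma_{\tau_u}=\exp(A_K\tau_u)\Sigma_0\exp(A_K^\top\tau_u)+\int_0^{\tau_u}(\cdots)\,\mathrm{d}s\preceq\exp(A_K\tau_u)\Sigma_0\exp(A_K^\top\tau_u)+\Sigma$, I would bound the transient covariance contribution---arising from the initial-state randomness $\Sigma_0$---by the $\exp(A_K\tau_u)$-decay, and the definition of $M_2$ in (\ref{eq:def_M2}) is tailored so that this contribution is dominated by $\tr(C\Sigma C^\top)$ and $\|C\Sigma C^\top\|_{\mathrm{F}}$ in the trace and Frobenius parts, respectively. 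Substituting these into the concentration bound gives $\|\Xi\|_{\mathrm{F}}\le S_m(\delta_u)$ with probability at least $1-\delta_u$, and the triangle inequality assembles (\ref{eq:bound_E-Estar}). The main obstacle is this concentration step: the shared baseline $e^0$ correlates the columns, so rather than a sum of i.i.d.\ sub-exponentials one must concentrate a genuine Gaussian chaos, and the sub-exponential norm has to be tracked sharply enough that the variance proxy emerges as $\|C\Sigma C^\top\|_{\mathrm{F}}$ rather than a dimension-inflated surrogate; matching the exact constants $9$, $\sqrt{2mc}+2$, and $c^2$ of (\ref{eq:Sm_FF}) is then routine but delicate.
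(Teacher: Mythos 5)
Your proof follows the same route as the paper's: the triangle-inequality split of $\|E-E^\star\|_{\mathrm{F}}$ into a deterministic bias $\|\mathbb{E}[E]-E^\star\|_{\mathrm{F}}$ and a fluctuation $\|E-\mathbb{E}[E]\|_{\mathrm{F}}$, the identical bias bound $\sqrt{mp}\,\|C\|_2\|A_K^{-1}\|_2\|B\|_2\|\exp(A_K\tau_u)\|_2$ combined with the Lyapunov decay estimate, a Hanson--Wright/sub-exponential-norm argument for the fluctuation, and the use of $\tau_u\geq\|Z\|_2\log M_2$ to replace $\Sigma_{\tau_u}$ by $2\Sigma$ in trace and Frobenius norm. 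The one substantive divergence is in the modeling of the stochastic term, and there your treatment is the more careful one: the paper asserts that the columns of $E-\mathbb{E}[E]$ are \emph{independent} with covariance $C\Sigma_{\tau_u}C^\top$ and hence that $\mathbb{E}[\|E-\mathbb{E}[E]\|_{\mathrm{F}}^2]=m\,\mathrm{tr}(C\Sigma_{\tau_u}C^\top)$, whereas, as you note, every column $e^{\prime i}(\tau_u)=e^i(\tau_u)-e^0(\tau_u)$ contains the same baseline run, so the columns are correlated, each has covariance $2C\Sigma_{\tau_u}C^\top$, and the correct mean is $2m\,\mathrm{tr}(C\Sigma_{\tau_u}C^\top)$; viewing $\|E-\mathbb{E}[E]\|_{\mathrm{F}}^2$ as a quadratic form in the stacked Gaussian $(\delta_x^0,\dots,\delta_x^m)$ is exactly what makes the concentration step legitimate despite the coupling. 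Be aware, though, that this correction does not reproduce the stated constants: after $\mathrm{tr}(C\Sigma_{\tau_u}C^\top)\leq 2\,\mathrm{tr}(C\Sigma C^\top)$ your mean becomes $4m\,\mathrm{tr}(C\Sigma C^\top)$, while $S_m(\delta_u)$ in (\ref{eq:Sm_FF}) carries only $2m\,\mathrm{tr}(C\Sigma C^\top)$ (and the Frobenius variance proxy likewise acquires a constant from the block structure of the stacked covariance). So your argument, carried out honestly, proves (\ref{eq:bound_E-Estar}) only with the first term under the square root in $S_m$ enlarged by a constant factor; the bound exactly as stated rests on the paper's independence claim, which does not hold. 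None of this affects the downstream order-of-magnitude statements, but it is a genuine (if benign) discrepancy worth flagging rather than dismissing as "routine but delicate" constant matching.
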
 
\begin{proof} 

The deviation of $E$ from $E^\star$ can be bounded via the triangle inequality as,
    \begin{equation}\label{eq:bound_of_E-Estar}
    \|E -E^\star \|_\mathrm{F} \leq \| E- \mathbb{E}\left[ E \right] \|_\mathrm{F}  + \| \mathbb{E}\left[ E \right] -E^\star \|_\mathrm{F}. 
\end{equation}
We first bound the second term.
From (\ref{eq:mean_Pcont}), we obtain
$\mathbb{E} \left[ e^{\prime i}(t) \right]= -C \exp{(A_K t)} \left( A_K^{-1} B u_0^i \right) + C A_K^{-1}  Bu_0^i$,
and hence
\begin{equation}\label{eq:error_e_tau_prime}
    \mathbb{E} \left[ e^{\prime i}(\tau_u) - e^{\prime i}(\infty) \right]= -C  \exp{(A_K \tau_u)} \left( A_K^{-1} B u_0^i \right)
\end{equation}
holds.
Combining (\ref{eq:DataMatrix_tau}), (\ref{eq:Estar}), (\ref{eq:error_e_tau_prime}), and the fact that $\begin{pmatrix}u_0^1 & \dots &u_0^m\end{pmatrix} =I$, 
we have
\begin{equation*}
\begin{split}
     &\| \mathbb{E}\left[ E \right] -E^\star \|_\mathrm{F}
     =\|C \exp{(A_K \tau_u)}A_K^{-1}B\|_\mathrm{F}\\
     &\leq \sqrt{mp} \| C\|_2 \|A_K^{-1}\|_2 \|B\|_2 \| \exp(A_K \tau_u)\|_2\\
     &\leq \sqrt{mp} \| C\|_2 \|A_K^{-1}\|_2 \|B\|_2 \frac{\|Z\|_2}{\lambda_{\min}(Z)} \exp{\left( - \frac{\tau_u}{2\|Z \|_2}\right)},
     \end{split}
\end{equation*}
where the first inequality is due to $\|C \exp{(A_K \tau_u)}A_K^{-1}B\|_\mathrm{F}\leq
\sqrt{mp}\|C \exp{(A_K \tau_u)}A_K^{-1}B\|_2$,
and the last inequality follows from \cite[Lemma 12]{Convergence_Complexity}.\par

We now bound the first term in (\ref{eq:bound_E-Estar}).
Since $e_x(t)$ follows a normal distribution with covariance $\Sigma_t$,
each column of $ E- \mathbb{E}\left[ E \right]$ independently follows a normal distribution with mean $0$ and covariance $C \Sigma_{\tau_u} C^\top$.
Then we have $\mathbb{E}[\| E- \mathbb{E}\left[ E \right]\|_{\mathrm{F}}^2]=m\mathrm{tr}(C \Sigma_{\tau_u} C^\top)$ .
Applying \cite[Theorem 6.2.1]{Vershynin_2018} yields, for any $s>0$, 
\begin{equation*}
    \begin{split}
        &\mathbb{P}\left( 
        \left| \| E- \mathbb{E}\left[ E \right] \|_\mathrm{F}^2
        -m\mathrm{tr}\left(C \Sigma_{\tau_u} C^\top \right)\right|
        \geq s \right)\\
       & \leq 
        2\exp\left(-c \min \left\{
        \frac{s^2}{\mathfrak{G}^4 m \|C \Sigma_{\tau_u} C^\top \|_{\mathrm{F}}^2},
        \,
        \frac{s}{\mathfrak{G}^2 \|C \Sigma_{\tau_u} C^\top \|_2}
        \right\}
        \right).
    \end{split}
\end{equation*}
From the above concentration inequality and \cite[Lemma 11]{Soltanolkotabi_etal_2019}, we have
\begin{equation*}
\begin{split}
    &\quad\left\|  \| E- \mathbb{E}\left[ E \right] \|_\mathrm{F}^2
        -m\mathrm{tr}\left(C \Sigma_{\tau_u} C^\top \right) \right\|_{\psi_1}\\
      &  \leq \frac{9(\sqrt{2mc}+2)\mathfrak{G}^2 \|C \Sigma_{\tau_u} C^\top \|_{\mathrm{F}}}{2c}.
\end{split}
\end{equation*}
The above bound of sub-exponential norm implies that
\begin{equation}\label{eq:ProbBound_E}
    \begin{split}
        &\quad\mathbb{P}\left(\| E- \mathbb{E}\left[ E \right] \|_\mathrm{F}\leq 
        \sqrt{m\mathrm{tr}\left(C \Sigma_{\tau_u} C^\top \right)+ s }\right)\\
        &=\mathbb{P}(\| E- \mathbb{E}\left[ E \right] \|_\mathrm{F}^2\leq m\mathrm{tr}\left(C \Sigma_{\tau_u} C^\top \right)+ s )\\
        &\geq 1-
        2\exp\left(-c^2
        \frac{
        2s
        }{
        9(\sqrt{2mc}+2)\mathfrak{G}^2 \|C \Sigma_{\tau_u} C^\top \|_{\mathrm{F} }
        }
        \right)
    \end{split}
\end{equation} 
holds.\par

Lastly, we derive the upper bounds for $\mathrm{tr}\left(C \Sigma_{\tau_u} C^\top \right),\,\|C \Sigma_{\tau_u} C^\top \|_{\mathrm{F}}$.
Since $\Sigma_{\tau_u}$ is given by (\ref{eq:var_Pcont}), 
\begin{equation*}
    \begin{split}
       &\quad \mathrm{tr}\left(C \Sigma_{\tau_u} C^\top \right)\leq
        \|C\|_2^2 \|\exp(A_Kt)\|_2^2 \mathrm{tr}(\Sigma_0)+\mathrm{tr}(C\Sigma C^\top)\\
        &\leq  \frac{\|Z\|_2^2 \|C\|_2^2 \mathrm{tr}(\Sigma_0)}{\lambda_{\min}(Z)^2} \exp{\left( - \frac{\tau_u}{\|Z \|_2}\right)} + \mathrm{tr}(C\Sigma C^\top)
    \end{split}
\end{equation*}
holds, where $\Sigma$ is defined in (\ref{eq:Pcont_Lyapunov}).
If $\tau_u\geq \|Z\|_2 \log \frac{\|Z\|_2^2 \|C\|_2^2 \mathrm{tr}(\Sigma_0)}{\lambda_{\min}(Z)^2\mathrm{tr}(C\Sigma C^\top)} $, then we have $\mathrm{tr}\left(C \Sigma_{\tau_u} C^\top \right)\leq 2\mathrm{tr}(C\Sigma C^\top)$.
Similarly, if $\tau_u\geq \|Z\|_2 \log \frac{\|Z\|_2^2 \|C\|_2^2 \|\Sigma_0\|_\mathrm{F} }{\lambda_{\min}(Z)^2 \|C\Sigma C^\top\|_\mathrm{F} } $, then we obtain $\|C \Sigma_{\tau_u} C^\top \|_\mathrm{F} \leq 2\|C\Sigma C^\top\|_\mathrm{F}$.

\end{proof}

\begin{proof}[proof of Theorem~\ref{thm:size_of_tau_u}]
From (\ref{eq:def_of_hatu}) and (\ref{eq:ustar}), 
the following equality holds:
\begin{equation}\label{eq:bound_ustar_proof}
   \begin{split}
       &\quad \| \hat{u}_0-u^\star\|
= \|-E^+ e^0(\tau_u) +{E^\star}^+ { \mathbb{E}\left[ e^0(\infty)\right] } \| \\
&= \left\| \left({E^\star}^+ - {E}^+ \right) (e^0(\tau_u)-\mathbb{E} \left[ e^0(\infty)\right] )\right. \\
&\left. \quad - {E^\star}^+ (e^0(\tau_u)- \mathbb{E} \left[ e^0(\infty)\right] ) +
\left({E^\star}^+  - {E}^+ \right)  { \mathbb{E}\left[ e^0(\infty)\right]} \right\|\\
&{ \leq \|{E^\star}^+ -E^+\|_2
         \left(\|e^0(\tau_u)-\mathbb{E} \left[ e^0(\infty)\right] \|+\| \mathbb{E}\left[ e^0(\infty)\right] \|
         \right)}\\
        &{\quad +\frac{1}{\sigma_{p}(E^{\star})} \| ( e^0(\tau_u)-\mathbb{E} \left[ e^0(\infty)\right]) \|.}
   \end{split}
\end{equation} 
Using \cite[Theorem 4.1]{Wedin_1973}, we have
$ \|{E^\star}^+ -E^+\|_2 \leq 
    \sqrt{2}\frac{1}{\sigma_p(E)}   \| {E^\star}\|_2  \| {E^\star}-E\|_2$.

From $\sigma_p(E)\geq \sigma_p(E^\star)-\|E-E^\star\|_2$ \cite[Corollary 7.3.5]{horn2012matrix} and Lemma~\ref{lem:E-Estar}, we have
\begin{equation*}
    \begin{split}
       & \sigma_p(E)\geq \sigma_p(E^\star)-S_m(\delta_u)\\
       &\quad -\sqrt{mp} \| C\|_2 \|A_K^{-1}\|_2 \|B\|_2 \frac{\|Z\|_2}{\lambda_{\min}(Z)} \exp{\left( - \frac{\tau_u}{2\|Z \|_2}\right)}.
    \end{split}
\end{equation*}
If $\tau_u\geq 2\|Z\|_2\log\frac{4\sqrt{mp} \| C\|_2 \|A_K^{-1}\|_2 \|B\|_2 \|Z\|_2}{\lambda_{\min}(Z)\sigma_p(E^\star) }$, we have $\sigma_p(E)\geq\sigma_p(E^\star)/2$
with probability greater than
$1-2\exp\left(- 2c^2 
        \frac{
        \sigma_p(E^\star)^2/16-2m\mathrm{tr}(C\Sigma C^\top) 
        }{
        9(\sqrt{2mc}+2)\mathfrak{G}^2 \|C \Sigma C^\top \|_{\mathrm{F} }
        } \right)$.

This yields
\begin{equation}\label{eq:bound_of_psudoinv}
    \begin{split}
        \|{E^\star}^+ -E^+\|_2
    &\leq \frac{ 2{ \sqrt{2}}\|{E^\star}\|_2 \|{E^\star}-E\|_\mathrm{F} }{ \sigma_p (E^\star) },
    \end{split}
\end{equation}
with probability greater than $1-M_4$.

We next bound the quantity $\| ( e^0(\tau_u)-\mathbb{E} \left[ e^0(\infty)\right]) \|$.
By adding and subtracting $\mathbb{E} \left[ e^0(\tau_u)\right]$, we obtain
\begin{equation}\label{eq:bias_variance_e0}
\begin{split}
    &\| ( e^0(\tau_u)-\mathbb{E} \left[ e^0(\infty)\right]) \|\\
    &\leq 
    \| e^0(\tau_u)-\mathbb{E} \left[ e^0(\tau_u)\right]\|+
    \| \mathbb{E} \left[ e^0(\tau_u)\right] -\mathbb{E} \left[ e^0(\infty)\right] \|.
\end{split}
\end{equation}
From (\ref{eq:mean_Pcont}), we have
\begin{equation}\label{eq:bias_e0}
    \begin{split}
        &\quad\|\mathbb{E} \left[ e^0(\tau_u)\right] -\mathbb{E} \left[ e^0(\infty)\right] \|\\
        &=\|C \exp{(A_K\tau_u)} (\mathbb{E}[e_x(0)]-A_K^{-1}B u^\star )\|\\
        &\leq \|C\|_2 (D_x + \|A_K^{-1}\|_2 \|B\|_2 \|u^\star\| )\frac{\|Z\|_2}{\lambda_{\min}(Z)} \exp{\left( - \frac{\tau_u}{2\|Z \|_2}\right)}.
    \end{split}
\end{equation}
Using the same argument as the derivation of (\ref{eq:ProbBound_E}), for any $s>0$, 
the first term in (\ref{eq:bias_variance_e0}) is bounded as
\begin{equation}\label{eq:variance_e0}
    \begin{split}
       &\quad \mathbb{P}\left(  \| e^0(\tau_u)-\mathbb{E} \left[ e^0(\infty)\right] \|
    \leq \sqrt{ \mathrm{tr}(C \Sigma_{\tau_u} C^\top) +s}\right)\\
    &\geq 1-2\exp\left(-c^2         
        \frac{s}{9(\sqrt{2c}+2)\mathfrak{G}^2 \|C \Sigma C^\top \|_{\mathrm{F}}}
        \right).
    \end{split}
\end{equation}
Combining (\ref{eq:bound_ustar_proof}), (\ref{eq:bound_of_psudoinv}), (\ref{eq:bias_variance_e0}), (\ref{eq:bias_e0}) and (\ref{eq:variance_e0}), we obtain the result.

\end{proof}

\subsection{Proof of Theorem \ref{thm:Samp_comp_gain}}\label{appendix:proof_of_SampCompGain}
To prepare for the proof of Theorem~\ref{thm:Samp_comp_gain}, we provide several auxiliary lemmas.
The first lemma establishes a uniform stability margin for the closed-loop system over a sublevel set.
The second lemma ensures that $K\pm rU$ stabilizes the system if $K\in \mathcal{S}\cap \Omega$ and $r$ is sufficiently small.
Finally, we show that $f(K)$ is Hessian Lipschitz.

\begin{lemma}\label{lem:bound_of_smoothing}
    For any $a>0$ with $S(a)\neq \emptyset$, there exists $\mathfrak{f}_a>0$ such that $\mathrm{Re}(\lambda_{\max}(\bar{A}_K))\leq -\mathfrak{f}_a$, for all $K\in S(a)\cap\Omega$.
\end{lemma}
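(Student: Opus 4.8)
The plan is a standard compactness argument: I would show that the continuous spectral-abscissa map $K \mapsto \mathrm{Re}(\lambda_{\max}(\bar{A}_K))$ attains its maximum over $S(a)\cap\Omega$, and that this maximum is strictly negative because every gain in the set is stabilizing.

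First I would establish that $S(a)\cap\Omega$ is compact. Since $\Omega$ is bounded, closed, and convex by Problem~\ref{problem:gain_optimization}, it is compact, so it suffices to show $S(a)$ is closed. The essential ingredient is the coercivity (barrier) property of the cost on the stabilizing set $\mathcal{S}$: as $K$ approaches $\partial\mathcal{S}$, i.e.\ as $\bar{A}_K$ develops an eigenvalue whose real part tends to $0^-$, the unique Lyapunov solution, represented as $X=\int_0^\infty \exp(\bar{A}_K t)\,\tilde{W}_K\,\exp(\bar{A}_K^\top t)\,dt$, blows up along the associated eigendirection; because the $V$-block of $\tilde{W}_K$ satisfies $V\succ O$ and the pair $(\bar{A}_K,{Q^\prime}^{1/2})$ is detectable by Assumption~\ref{assump:detectability}, this growth is seen by the cost, forcing $\mathrm{tr}(Q^\prime X)\to\infty$ and hence $f(K)\to\infty$. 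Consequently, any sequence $K_n\in S(a)$ with $K_n\to K$ cannot converge to a point of $\partial\mathcal{S}$; moreover it cannot converge to a strictly non-stabilizing $K$ either, since eigenvalues depend continuously on the entries of $\bar{A}_{K_n}=\bar{A}-\bar{B}K_n\bar{C}$. Therefore $K\in\mathcal{S}$, and continuity of $f$ on $\mathcal{S}$ gives $f(K)\le a+\mathrm{tr}(Q_1 V)$, so $K\in S(a)$ and $S(a)$ is closed.

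Next I would invoke continuity and the extreme value theorem. The map $K\mapsto\bar{A}_K$ is affine and the spectral abscissa $M\mapsto\max_i\mathrm{Re}(\lambda_i(M))$ is continuous, so $g(K)\coloneqq\mathrm{Re}(\lambda_{\max}(\bar{A}_K))$ is continuous on the compact set $S(a)\cap\Omega$ and attains its maximum at some $K^\dagger$. Because $S(a)\cap\Omega\subseteq\mathcal{S}$, each such $\bar{A}_K$ is Hurwitz, so $g(K)<0$ pointwise and in particular $g(K^\dagger)<0$. Setting $\mathfrak{f}_a\coloneqq-g(K^\dagger)=-\max_{K\in S(a)\cap\Omega}\mathrm{Re}(\lambda_{\max}(\bar{A}_K))>0$ yields $\mathrm{Re}(\lambda_{\max}(\bar{A}_K))\le-\mathfrak{f}_a$ for all $K\in S(a)\cap\Omega$, which is the claim.

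The main obstacle is the coercivity step, namely verifying rigorously that $f(K)\to\infty$ as $K\to\partial\mathcal{S}$, since this is exactly what prevents the sublevel set from leaking out to non-stabilizing gains and thereby making the maximum of $g$ equal to zero. I would either cite the analogous barrier properties established in the policy-optimization literature (cf.\ \cite{Fazal_18, Convergence_Complexity_IEEE, Takakura_Sato_24}) or prove it directly by lower-bounding $\mathrm{tr}(Q^\prime X)$ through the eigenvector of $\bar{A}_K$ whose real part tends to zero, using the detectability of $(\bar{A}_K,{Q^\prime}^{1/2})$ together with the strict positivity of the integrator block of $\tilde{W}_K$. The remaining ingredients, namely continuity of the spectral abscissa, compactness of a closed bounded set, and the extreme value theorem, are routine.
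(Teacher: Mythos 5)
Your proposal is correct and follows essentially the same route as the paper: compactness of $S(a)\cap\Omega$, continuity of the spectral abscissa of the affine map $K\mapsto\bar{A}_K$, and the fact that $S(a)\subseteq\mathcal{S}$ (via the equivalence of $f(K)<\infty$ and stability under Assumption~\ref{assump:detectability}) so the attained maximum is strictly negative. The only difference is that you explicitly justify the closedness of $S(a)$ through the coercivity of $f$ near $\partial\mathcal{S}$, a step the paper's proof asserts implicitly when it claims compactness.
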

\begin{proof}[proof]
    
    We begin by proving the first statement.
    Under Assumption~\ref{assump:detectability}, it follows that $K\in \mathcal{S}$ if and only if $f(K)<\infty$.
    Therefore, for any $K\in S(a)$, the closed loop matrix $\bar{A}_K$ is Hurwitz.
    Since the set $S(a)\cap\Omega$ is compact and the eigenvalue $\lambda_{\max}(\bar{A}_K))$ is a continuous function of $K$, the statement holds. 
\end{proof}

The following lemma shows that a sufficiently small perturbation of $K\in \mathcal{S}\cap\Omega$ remains in
$\mathcal{S}$, where $\mathcal{S}$ is defined in Problem~\ref{problem:gain_optimization}. 
\begin{lemma}
    There exists $r_0> 0$ such that for any $0<r\leq r_0$, $K\in S(a)\cap \Omega$ and $U$ such that $\|U\|_\mathrm{F}=\sqrt{2mp}$, we have $K\pm rU \in \mathcal{S}$.
\end{lemma}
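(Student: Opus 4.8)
The plan is to produce a single Lyapunov certificate for $\bar{A}_K$ that continues to certify stability after the perturbation $\mp r\bar{B}U\bar{C}$, and to make the argument uniform in $K$ and $U$ by exploiting the compactness of $S(a)\cap\Omega$. As established in the proof of the preceding lemma, Assumption~\ref{assump:detectability} gives $S(a)\cap\Omega\subseteq\mathcal{S}$, so for every such $K$ the matrix $\bar{A}_K$ is Hurwitz and the Lyapunov equation $\bar{A}_K^\top P_K+P_K\bar{A}_K=-I$ has a unique solution $P_K\succ O$, namely $P_K=\int_0^\infty \exp(\bar{A}_K^\top t)\exp(\bar{A}_K t)\,dt$.

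First I would show that the map $K\mapsto P_K$ is continuous on $\mathcal{S}$. The Lyapunov operator $P\mapsto\bar{A}_K^\top P+P\bar{A}_K$ is linear in $P$ and invertible whenever $\bar{A}_K$ is Hurwitz (no two of its eigenvalues sum to zero), so $P_K$ is the image of $-I$ under the inverse of an operator that depends continuously on $K$; continuity of matrix inversion on the open set of invertible operators then yields continuity of $K\mapsto P_K$. Since $S(a)\cap\Omega$ is compact, the continuous map $K\mapsto\|P_K\|_2$ attains a finite maximum $\bar{p}>0$ on this set.

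Next I would bound the perturbation uniformly. Writing $\bar{A}_{K\pm rU}=\bar{A}_K\mp r\bar{B}U\bar{C}$ and using $\|U\|_2\le\|U\|_\mathrm{F}=\sqrt{2mp}$, we get $\|\bar{B}U\bar{C}\|_2\le\sqrt{2mp}\,\|\bar{B}\|_2\|\bar{C}\|_2\eqqcolon\beta$, a bound independent of $K$ and $U$. Substituting the unperturbed certificate $P_K$ into the perturbed Lyapunov expression yields
\begin{equation*}
\bar{A}_{K\pm rU}^\top P_K+P_K\bar{A}_{K\pm rU}
= -I\mp r\bigl((\bar{B}U\bar{C})^\top P_K+P_K\bar{B}U\bar{C}\bigr),
\end{equation*}
whose second term has spectral norm at most $2r\beta\bar{p}$.

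Then I would choose $r_0>0$ so that $2r_0\beta\bar{p}<1$. For any $r\le r_0$ the right-hand side above is negative definite, so $P_K\succ O$ remains a Lyapunov certificate for $\bar{A}_{K\pm rU}$, and the Lyapunov stability theorem gives that $\bar{A}_{K\pm rU}$ is Hurwitz, i.e.\ $K\pm rU\in\mathcal{S}$; since $r_0$ depends only on $\beta$ and $\bar{p}$, it is uniform over $K\in S(a)\cap\Omega$ and over all admissible $U$. The main obstacle is the uniform bound $\bar{p}$: pointwise stability of each individual $\bar{A}_K$ is insufficient, because the transient growth of $\exp(\bar{A}_K t)$, and hence $\|P_K\|_2$, is not controlled by the spectral-abscissa margin $\mathfrak{f}_a$ alone. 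It is precisely the compactness of $S(a)\cap\Omega$ together with the continuity of $K\mapsto P_K$ that upgrades the pointwise Hurwitz property into the required uniform stability margin.
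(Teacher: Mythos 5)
Your proof is correct, but it takes a different route from the paper's. The paper's own proof is much shorter: it invokes the preceding lemma to get the uniform spectral-abscissa margin $\mathrm{Re}(\lambda_{\max}(\bar{A}_K))\leq -\mathfrak{f}_a$ on $S(a)\cap\Omega$, and then appeals directly to continuity of eigenvalues to conclude that a sufficiently small perturbation $\mp r\bar{B}U\bar{C}$ keeps the spectrum in $\{\mathrm{Re}(\lambda)\leq -\mathfrak{f}\}$ for some $0<\mathfrak{f}<\mathfrak{f}_a$. That argument is qualitative, and the uniformity of $r_0$ over $K$ and $U$ is left implicit (it requires a compactness argument on $S(a)\cap\Omega$ times the sphere $\{\|U\|_\mathrm{F}=\sqrt{2mp}\}$, since eigenvalue perturbation bounds for non-normal matrices are not uniform in general). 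Your Lyapunov-certificate route makes exactly this uniformity explicit: continuity of $K\mapsto P_K$ (via invertibility of the Lyapunov operator on the Hurwitz set) plus compactness of $S(a)\cap\Omega$ yields the uniform bound $\bar{p}$, and the resulting $r_0<1/(2\beta\bar{p})$ is an explicit, computable threshold. Your closing observation --- that the spectral margin $\mathfrak{f}_a$ alone does not control transient growth of $\exp(\bar{A}_K t)$, so a pointwise Hurwitz property must be upgraded through compactness --- is precisely the subtlety the paper's one-line eigenvalue-continuity argument glosses over. In short, the paper's proof is shorter; yours is more self-contained and quantitative, at the cost of introducing the auxiliary certificate $P_K$.
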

\begin{proof}
    Lemma~\ref{lem:bound_of_smoothing} implies that
    $\mathrm{Re}(\lambda_{\max}(\bar{A}_K))\leq -\mathfrak{f}_a$ holds for any $K\in S(a)\cap \Omega$.
    Because of the continuity of eigenvalues, there exists $r_0> 0$ and $\mathfrak{f}_a>\mathfrak{f}>0$ such that $\mathrm{Re}(\lambda_{\max}(\bar{A}_{K\pm rU}))\leq -\mathfrak{f}<0$, for any $0<r<r_0$, which completes the proof.
\end{proof}

The following lemma establishes that the cost function $f(K)$ is Hessian Lipschitz for all $K\in S(a)\cap\Omega$, which is crucial for the analysis on the smoothing parameter $r$.
\begin{lemma}\label{lem:H-Lipschitz}
We define a constant $H_a$ by
\begin{equation*}
    \frac{8\sqrt{n+p} a\mathfrak{s}\|\bar{B}\|_2^2 \|\bar{C}\|_2^2 }{\mathfrak{f}_a^2}
\left( \mathfrak{b}_a\mathfrak{f}_a 
        + \mathfrak{t}\|\bar{B}\|_2 
        +\|\bar{B}\|_2 \lambda_{\min}(V)\mathfrak{f}_a  \right),
\end{equation*}
where $\mathfrak{s}= \frac{1}{\lambda_{\min}(V) },\,
    \mathfrak{t}=(1+\mathfrak{d}_{\Omega} \|\bar{B}\|_2)\lambda_{\min}(V),\,
    \mathfrak{d}_{\Omega}=\max_{K\in\Omega}\|K\|_\mathrm{F} ,\,
        \mathfrak{c}= \lambda_{\min}(W)+\lambda_{\min}(V) (1+\|\bar{B}\|_2 \mathfrak{d}_{\Omega})^2$ and
    $\mathfrak{b}_a=\|\bar{B}\|_2 (\|\bar{C}\|_2 \mathfrak{c}/2\mathfrak{f}_a +\mathfrak{t})/\mathfrak{f}_a$.
Let $K^1,\,K^2$ be such that the line segment $K^1+t(K^2-K^1)$ with $t\in[0,1]$ lies in $ S(a) \cap \Omega$.
Then we have
\begin{equation*}
    \left| \nabla^2 f(K^1)[E,E] -\nabla^2 f(K^2)[E,E] \right|
    \leq H_a \|E\|_{\mathrm{F}}^2  \|K^1-K^2\|_{\mathrm{F}},
\end{equation*}
for all $E\in\mathbb{R}^{m\times 2p}$, where
$\nabla^2 f(K)[E,E]$ denotes the action on $E$ of the Hessian $\nabla^2 f$ at $K$.
\end{lemma}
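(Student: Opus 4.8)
The plan is to deduce the Hessian–Lipschitz estimate from a uniform bound on the \emph{third} directional derivative of $f$. Since the segment $K^1+t(K^2-K^1)$, $t\in[0,1]$, lies in $S(a)\cap\Omega$ and $\bar{A}_K$ is Hurwitz there, $f$ is real-analytic along the segment; hence the fundamental theorem of calculus gives
\[
\nabla^2 f(K^1)[E,E]-\nabla^2 f(K^2)[E,E]
=\int_0^1 \nabla^3 f\big(K^2+t(K^1-K^2)\big)[E,E,K^1-K^2]\,dt .
\]
It therefore suffices to prove the pointwise bound $\big|\nabla^3 f(K)[E,E,E']\big|\le H_a\|E\|_\mathrm{F}^2\|E'\|_\mathrm{F}$ for every $K\in S(a)\cap\Omega$ and all $E,E'$, after which the claim follows by taking $E'=K^1-K^2$.

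Next I would make the derivatives explicit through the integral representation $X(K)=\int_0^\infty e^{\bar{A}_K t}\tilde{W}_K e^{\bar{A}_K^\top t}\,dt$, valid because $\bar{A}_K$ is Hurwitz on $S(a)\cap\Omega$. Two structural facts drive the computation: $\bar{A}_K$ is affine in $K$ with $\frac{d}{ds}\bar{A}_{K+sE}=-\bar{B}E\bar{C}$, and $\tilde{W}_K$ is \emph{quadratic} in $K_P$, so its third derivative vanishes. Differentiating the Lyapunov identity $\bar{A}_K X+X\bar{A}_K^\top+\tilde{W}_K=O$ successively in the directions $E,E,E'$ shows that $X'[E]$, $X''[E,E]$, and $X'''[E,E,E']$ each solve a Lyapunov equation of the form $\bar{A}_K(\cdot)+(\cdot)\bar{A}_K^\top+R=O$, whose inhomogeneity $R$ is assembled from the fixed matrices $-\bar{B}E\bar{C}$ and $-\bar{B}E'\bar{C}$, the lower-order solutions, and the first/second derivatives of $\tilde{W}_K$. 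Applying $\mathrm{tr}(Q'\,\cdot\,)$ to the third equation then expresses $\nabla^3 f(K)[E,E,E']$ as a finite sum of traces of $Q'$ against triply nested integrals of $e^{\bar{A}_K t}$ against these data.

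The bound then reduces to estimating each summand uniformly over $S(a)\cap\Omega$. The key ingredient is the uniform stability margin from Lemma~\ref{lem:bound_of_smoothing}, $\mathrm{Re}\,\lambda_{\max}(\bar{A}_K)\le-\mathfrak{f}_a$, which I would upgrade to an exponential decay $\|e^{\bar{A}_K t}\|_2\le \kappa\,e^{-\mathfrak{f}_a t}$ via a Lyapunov-function argument, so that every Lyapunov solve contributes a factor $\int_0^\infty\|e^{\bar{A}_K t}\|_2^2\,dt\lesssim 1/\mathfrak{f}_a$. Combining this with (i) $\|\bar{B}E\bar{C}\|_2\le\|\bar{B}\|_2\|\bar{C}\|_2\|E\|_\mathrm{F}$, which supplies the powers of $\|\bar{B}\|_2$ and $\|\bar{C}\|_2$; (ii) the bound $a\,\mathfrak{s}$ on the relevant (observability) Gramian obtained from the sublevel constraint $\mathrm{tr}(Q'X)\le a$ together with a positive lower bound on $\lambda_{\min}(\tilde{W}_K)$ reflected in $\mathfrak{s}=1/\lambda_{\min}(V)$; and (iii) the bounds $\mathfrak{c}$ and $\mathfrak{t}$ on $\tilde{W}_K$ and its $K_P$-derivative and the bundled constant $\mathfrak{b}_a$, I would estimate each term and collect powers of $\|\bar{B}\|_2$, $\|\bar{C}\|_2$, and $1/\mathfrak{f}_a$. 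Tracking the constants carefully reproduces $H_a$.

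I expect the main obstacle to be twofold. First, turning the spectral margin $\mathfrak{f}_a$ into a usable exponential bound on $\|e^{\bar{A}_K t}\|_2$ with an \emph{explicitly controlled} prefactor: the spectral abscissa alone does not bound the transient growth of a non-normal matrix exponential, so the estimate must be routed through a Lyapunov/Gramian bound that is itself uniform over the compact set $S(a)\cap\Omega$. Second, the bookkeeping across three successive differentiations produces many product-rule terms, and---unlike standard state-feedback LQR---the dependence of $\tilde{W}_K$ on $K_P$ contributes extra derivative terms; organizing all of these and collapsing them into the single constant $H_a$ is the delicate part, whereas each individual estimate is routine once the exponential bound is in hand.
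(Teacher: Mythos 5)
Your proposal follows essentially the same route as the paper's proof: reduce the Hessian--Lipschitz claim to a uniform bound on the derivative of $K\mapsto\nabla^2 f(K)[E,E]$ along the segment, obtain that derivative by successively differentiating the Lyapunov identity so that each new term solves a further Lyapunov equation, and bound every Lyapunov solution uniformly over $S(a)\cap\Omega$ using the stability margin $\mathfrak{f}_a$ from Lemma~\ref{lem:bound_of_smoothing}, the sublevel bound $\|Y\|_2\le a\mathfrak{s}$, and the norm bounds $\mathfrak{c},\,\mathfrak{t},\,\mathfrak{b}_a$ on $\tilde{W}_K$ and its $K_P$-derivatives. The only cosmetic difference is that you phrase the computation through third directional derivatives of $X$ in its integral representation, whereas the paper organizes the identical terms through the adjoint variable $Y$ and the auxiliary Lyapunov solutions $W_1,\dots,W_5$.
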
 
\begin{proof}[proof]
    Following the analysis in \cite[Lemma 3.11, 3.12]{Ilyas_Polyack2021}, we have
    \begin{align}
        &\nabla f(K) = -2 \bar{B}^\top Y X \bar{C}^\top 
        +
        2\bar{B}^\top Y  (\bar{I}_1+\bar{B}K \bar{I}_2)V \bar{I}_2^\top
        , \label{eq:grad_fK}\\
        &\frac{1}{2}\nabla^2 f(K)[E,E] = -2 \langle \bar{B}^\top Y X^\prime \bar{C}^\top,E\rangle \nonumber\\
        &+ \langle \bar{B}^\top Y^\prime (\bar{I}_1+\bar{B}K \bar{I}_2)V \bar{I}_2^\top ,E \rangle
        +\langle \bar{B}^\top Y\bar{B} E \bar{I}_2V \bar{I}_2^\top ,E \rangle, \nonumber
    \end{align}
    where $Y$, $X^\prime$ and $Y^\prime$ are the solutions of (\ref{eq:Lyap_Y}), (\ref{eq:X_prime}) and (\ref{eq:Y_prime}), respectively, and we define $\bar{I}_1=\begin{pmatrix}
        O\\I
    \end{pmatrix}$ and $\bar{I}_2=\begin{pmatrix}
        I\\O
    \end{pmatrix}$.
   Define $j(K)\coloneq\nabla^2 f(K)[E,E]$.
   Using the same argument as in the proof of \cite[Lemma 12]{Convergence_Complexity_IEEE}, 
   we obtain
   \begin{equation}\label{eq:perturb_Hessian}
    \begin{split}
        &\quad \nabla j(K)\\
        &=\bar{B}^\top 
        Y(-4W_1+2W_4+2W_5)\bar{C}^\top \\
        &\quad +\bar{B}^\top \{Y^\prime(-4X^\prime +2W_3) -4W_2 X
        \}\bar{C}^\top\\
        &\quad + \bar{B}^\top \{4W_2  (\bar{I}_1+\bar{B}K ) + 6Y^\prime\bar{B}E \}\bar{I}_2V\bar{I}_2^\top,
    \end{split}
    \end{equation}
    where $X^\prime ,\,Y^\prime,\,W_1,\,W_2,\,W_3,\,W_4$, and $W_5$ are defined in (\ref{eq:matrices_Lyapnov_H_Lipschitz}). \par
    \begin{figure*}[b]
    \centering
    \rule{\textwidth}{0.6pt}
    
\begin{align}
&\bar{A}_K^\top Y +Y \bar{A}_K +Q^\prime =O \label{eq:Lyap_Y}\\
    &\bar{A}_K X^\prime +X^\prime \bar{A}_K^\top - \bar{B} E \bar{C}X -( \bar{B} E \bar{C}X)^\top
           +\bar{B} E \bar{I}_2 V (\bar{I}_1+\bar{B}K \bar{I}_2)^\top
           +(\bar{I}_1+\bar{B}K \bar{I}_2) V (\bar{B} E \bar{I}_2)^\top
           =O \label{eq:X_prime}\\
    &\bar{A}_K^\top Y^\prime +Y^\prime \bar{A}_K
        - (\bar{B} E\bar{C})^\top Y - Y\bar{B} E\bar{C} =O \label{eq:Y_prime}
\end{align}
\begin{equation}\label{eq:matrices_Lyapnov_H_Lipschitz}
        \begin{split}
        &\bar{A}_K X^\prime +X^\prime \bar{A}_K^\top - \bar{B} E \bar{C}X -( \bar{B} E \bar{C}X)^\top
           +\bar{B} E \bar{I}_2 V (\bar{I}_1+\bar{B}K \bar{I}_2)^\top
           +(\bar{I}_1+\bar{B}K \bar{I}_2) V (\bar{B} E \bar{I}_2)^\top
           =O\\
            &\bar{A}_K W_1 +W_1 \bar{A}_K^\top -\bar{B}E\bar{C}X^\prime -(\bar{B}E\bar{C}X^\prime)^\top=O \\
           &  \bar{A}_K^\top W_2+W_2 \bar{A}_K -(\bar{B}E\bar{C})^\top Y^\prime -Y^\prime\bar{B}E\bar{C}=O\\
           &\bar{A}_K W_3+W_3 \bar{A}_K^\top -\bar{B}E \bar{I}_2 V (\bar{I}_1+\bar{B}K \bar{I}_2)^\top
           -(\bar{I}_1+\bar{B}K \bar{I}_2) V (\bar{B}E \bar{I}_2)^\top=O\\
          & \bar{A}_K W_4+W_4 \bar{A}_K^\top -W_3(\bar{B}E\bar{C})^\top -(\bar{B}E\bar{C}) W_3=O\\
          &\bar{A}_K W_5 + W_5 \bar{A}_K^\top -2\bar{B}E\bar{I}_2 V \bar{I}_2^\top E^\top \bar{B}^\top=O
        \end{split}
    \end{equation}
\end{figure*}
    To bound $\|\nabla j(K)\|_\mathrm{F}$, we derive bounds on the spectral norms of $X,\,Y,\, X^\prime ,\,Y^\prime,\,W_1,\ldots,W_5$.
    We begin with $Y$ and $X$.
    By applying the argument in \cite[Lemma C.2]{Ilyas_Polyack2021}, 
    \begin{equation*}\label{eq:bound_of_Y}
    \begin{split}
        f(K)-\mathrm{tr}(Q_1V)&=\mathrm{tr}(XQ^\prime)=
        \mathrm{tr}(Y\tilde{W}_K)\geq \mathrm{tr}(Y)\lambda_{\min}(\tilde{W}_K)\\
       & \geq \|Y\|_2 \lambda_{\min}(V)
         \end{split}
    \end{equation*}
    holds. Then we obtain $\|Y\|_2\leq a \mathfrak{s}$.
    Let $\mu$ denote the largest eigenvalue of $X$ and $v$ denote the corresponding eigenvector.
    Left- and right-multiplying $v^\top$ and $v$ to $\bar{A}_K X+ X \bar{A}_K^\top +\tilde{W}_K=O$ gives
    \begin{equation*}
    \begin{split}
        0&=v^\top (\bar{A}_K X+ X \bar{A}_K^\top +\tilde{W}_K)v =\mu v^\top (\bar{A}_K + \bar{A}_K^\top)v +v^\top \tilde{W}_Kv \\
        &\leq -2\mu \mathfrak{f}_a \|v\|^2 + \lambda_{\max}(\tilde{W}_K) \|v\|^2,
    \end{split}
    \end{equation*}
    and hence we have $\mu \leq \frac{ \lambda_{\max}(\tilde{W}_K) }{2\mathfrak{f}_a}$.
    Moreover, we bound $\lambda_{\max}(\tilde{W}_K)$ as
    $ \lambda_{\max}(\tilde{W}_K)= \lambda_{\max}(W)+ \lambda_{\max}\left(\begin{pmatrix}BK_P\\I\end{pmatrix} V \begin{pmatrix}BK_P\\I\end{pmatrix}^\top \right)
    \leq \lambda_{\max}(W)+\lambda_{\max}(V) (1+\|\bar{B}\|_2 \mathfrak{d}_{\Omega} )^2$,
    which implies that $\|X\|_2\leq \frac{\mathfrak{c} }{ 2\mathfrak{f}_a }$.
    By applying the same derivation as above,
    it holds that
    \begin{IEEEeqnarray*}{lCl}
            \|X^\prime \|_2 &&\leq  (\|\bar{B}\|_2 \|E\|_2 \|\bar{C}\|_2\|X\|_2 +\|\bar{B}\|_2 \|E \|_2\mathfrak{t}
            )/\mathfrak{f}_a
            \leq \mathfrak{b}_a\|E\|_2, \\
            \|Y^\prime \|_2&& \leq  \|\bar{B}\|_2 \|E\|_2 \|\bar{C}\|_2\|Y\|_2/\mathfrak{f}_a
            \leq a\mathfrak{s} \|\bar{B}\|_2 \|E\|_2 \|\bar{C}\|_2/\mathfrak{f}_a, \\
            \|W_1\|_2 &&\leq \|\bar{B}\|_2 \|E\|_2 \|\bar{C}\|_2 \|X^\prime\|_2/\mathfrak{f}_a
            \leq \mathfrak{b}_a \|\bar{B}\|_2  \|\bar{C}\|_2  \|E\|_2^2/2\mathfrak{f}_a^2, \\
            \|W_2\|_2&&\leq \|\bar{B}\|_2 \|E\|_2 \|\bar{C}\|_2 \|Y^\prime\|_2/\mathfrak{f}_a
            \leq a\mathfrak{s} \|\bar{B}\|_2^2 \|E\|_2^2 \|\bar{C}\|_2^2/\mathfrak{f}_a^2,\\
            \|W_3\|_2 &&\leq \|\bar{B}\|_2 \|E\|_2 \lambda_{\min}(V)(1+\|\bar{B}\|_2 \|K\|_2) /\mathfrak{f}_a \\
            &&=  \mathfrak{t} \|\bar{B}\|_2 \|E\|_2 / \mathfrak{f}_a,\\
            \|W_4\|_2&&\leq \|\bar{B}\|_2 \|E\|_2 \|W_3\|_2\|\bar{C}\|_2 / \mathfrak{f}_a
            \leq  \mathfrak{t}  \|\bar{B}\|_2^2 \|\bar{C}\|_2 \|E\|_2^2 / \mathfrak{f}_a^2,
            \\
            \|W_5\|_2&& \leq   \|\bar{B}\|_2^2 \|E\|_2^2 \lambda_{\min}(V)/\mathfrak{f}_a.
      \end{IEEEeqnarray*}
    Combining these bounds with (\ref{eq:perturb_Hessian}), we have $\|\nabla j(K)\|_\mathrm{F}\leq H_a \|E\|_\mathrm{F}^2$, which completes the proof.
    
\end{proof}

\begin{proof}[Proof of Theorem \ref{thm:Samp_comp_gain} ]
   The error between the estimated gradient $\hat{\nabla} f(K;\hat{u}_0)$ and the true gradient $\nabla f(K)$ can be decomposed as follows:
\begin{IEEEeqnarray}{lCl}
    &&\quad \left\| \nabla f(K)-\hat{\nabla} f(K;\hat{u}_0) \right\|_{\rm{F}} \nonumber \\
    &&\leq  \left\|\nabla f(K) -\frac{1}{N} \sum_{i=1}^N\langle\nabla f(K),\, U^i\rangle U^i \right\|_{\rm{F}} \nonumber\\
    &&\quad+ \left\|\frac{1}{N} \sum_{i=1}^N\langle\nabla f(K),\, U^i\rangle U^i-\bar{\nabla}f(K) \right\|_{\rm{F}}  \nonumber\\
   &&\quad +  \left\| \bar{\nabla}f(K)- \bar{\nabla}f_\tau (K)\right\|_{\rm{F}} + \left\| \bar{\nabla}f_\tau(K)- \bar{\nabla}f_\tau(K ;\hat{u}_0)\right\|_{\rm{F}}   \nonumber \\
    &&\quad  +\left\| \bar{\nabla}f_\tau(K; \hat{u}_0)- \hat{\nabla}f_\tau(K;\hat{u}_0)\right\|_{\rm{F}},  \label{eq:bound_of_estimator}
\end{IEEEeqnarray}
where
\begin{align}
       \bar{\nabla}f(K) &= \frac{1}{2rN} \sum_{i=1}^N (f(K+rU^i)-f(K-rU^i) ) U^i, \nonumber \\
         \bar{\nabla}f_{\tau}(K)& = \frac{1}{2rN} \sum_{i=1}^N (f_{\tau}(K+rU^i)-f_{\tau}(K-rU^i) ) U^i, \nonumber\\
         f_\tau(K)&=\mathbb{E}[ e^\star(\tau)^\top Q_1 e^\star(\tau) +z^\star(\tau)^\top Q_2 z^\star(\tau)  ], \nonumber\\
         \bar{\nabla}f_{\tau}(K;\hat{u}_0)&=\nonumber\\
         \frac{1}{2rN} &\sum_{i=1}^N (f_{\tau}(K+rU^i;\hat{u}_0)-f_{\tau}(K-rU^i ;\hat{u}_0) ) U^i, \label{eq:def_bar_nabla_uhat}\\
         f_\tau(K;\hat{u}_0)&=\mathbb{E}[ e(\tau)^\top Q_1 e(\tau) +z(\tau)^\top Q_2z(\tau) ] \label{eq:def_f_tau_uhat},
\end{align}
and $e^\star(t)$ and $z^\star (t)$ denote the trajectories of the error $e(t)$ and the integrator $z(t)$ under the feedforward $u_0=u^\star$.\par

We begin by bounding the first term in (\ref{eq:bound_of_estimator}).
Since $\mathbb{E}_U[ \langle\nabla f(K),\, U\rangle U] =\nabla f(K)$, we have
\begin{equation*}
    \begin{split}
        &\quad \left\|\nabla f(K) -\frac{1}{N} \sum_{i=1}^N\langle\nabla f(K),\, U^i\rangle U^i \right\|_{\rm{F}}\\
        &\leq 
        \left\|I -\frac{1}{N} \sum_{i=1}^N \mathrm{vec}(U^i)\mathrm{vec}(U^i)^\top\right\|_2
        \left\|\nabla f(K)\right\|_{\rm{F}}.
    \end{split}
\end{equation*}
From (\ref{eq:grad_fK}), we derive the upper bound of $\left\|\nabla f(K)\right\|_{\rm{F}}$, as follows
\begin{equation*}
\begin{split}
    \left\|\nabla f(K)\right\|_{\rm{F}} &\leq 2\|\bar{B}\|_2 \|\bar{C}\|_2 \|X\|_2 \|Y\|_{\mathrm{F}}\\
   &\quad +2 \|\bar{B}\|_2  \|Y\|_{\mathrm{F}} (1+\|\bar{B}\|_2 \|K\|_2) \lambda_{\min}(V)\\
    &\leq 2\sqrt{n+p}a \mathfrak{s}\mathfrak{b}_a \mathfrak{f}_a \eqqcolon \mathfrak{L}_a.
\end{split} 
\end{equation*}
Moreover, since $\left\|I -\mathrm{vec}(U_i)\mathrm{vec}(U_i)^\top\right\|_2=1$ and
$\left\|\sum_{i=1}^N \mathbb{E}[(I -\mathrm{vec}(U_i)\mathrm{vec}(U_i)^\top )^2] \right\|_2 = \left\|\sum_{i=1}^N (2mp-1)I  \right\|_2 =N(2mp-1)$,
the matrix Bernstein inequality \cite[Theorem 5.4.1]{Vershynin_2018} implies that, 
if
\begin{equation}\label{eq:size_OuterLoop}
    N\geq \frac{(12mp-6)\mathfrak{L}_a^2 +2\mathfrak{L}_a\epsilon/5}{3\epsilon^2/25}\log \frac{8mp}{\delta},
\end{equation}
then we have $\left\|\nabla f(K) -\frac{1}{N} \sum_{i=1}^N\langle\nabla f(K),\, U^i\rangle U^i \right\|_{\rm{F}}\leq \epsilon/5$ with probability greater than $1-\delta/2$.
\par

The second term in (\ref{eq:bound_of_estimator}) represents the bias induced by smoothing.
Following the proof of \cite[Proposition 5]{Convergence_Complexity_IEEE}, we have
$|\frac{1}{2r}(f(K+rU^i)-f(K-rU^i)) - \langle \nabla f(K),\, U^i\rangle | \leq \frac{r^2 (mp)^{3/2} H_{2a}}{2}$.
Consequently, it follows that
\begin{equation*}
    \left\|\frac{1}{N} \sum_{i=1}^N\langle\nabla f(K),\, U^i\rangle U^i-\bar{\nabla}f(K) \right\|_{\rm{F}}\leq \frac{r^2 (mp)^{2} H_{2a} }{2},
\end{equation*}
which provides the theoretical bound on smoothing parameter $r$ as:
\begin{equation}\label{eq:bound_smoothing}
    r\leq \frac{1}{mp} \sqrt{\frac{2\epsilon}{5H_{2a}} }.
\end{equation}
\par

The third term in (\ref{eq:bound_of_estimator}) arises from truncating the control horizon.
This term is bounded as:
\begin{equation*}
\begin{split}
    &\quad \left\| \bar{\nabla}f(K)- \bar{\nabla}f_\tau (K)\right\|_{\rm{F}}\\
    &=\| \frac{1}{2rN} \sum_{i=1}^N
    ( \{f(K+rU^i) -f_{\tau}(K+rU^i)\}  \\
   & \quad\quad  - \{f(K-rU^i) -f_{\tau}(K-rU^i) \} )U^i \|_\mathrm{F}\\
    &\leq \frac{\sqrt{2mp}  }{2rN } \sum_{i=1}^N (|f(K+rU^i) -f_{\tau}(K+rU^i) |\\
    &\quad\quad\quad\quad\quad\quad+|f(K-rU^i) -f_{\tau}(K-rU^i) |). \\
\end{split}
\end{equation*}
Since $ f_\tau(K) = \mathbb{E}[e^\star(\tau)^\top Q_1e^\star (\tau)+ z^\star(\tau)^\top Q_2 z^\star(\tau)]
    = \left\langle Q^\prime,\text{Var}\left( \begin{pmatrix} e_x^\star(\tau)\\ z^\star(\tau)\end{pmatrix} \right) \right\rangle +\langle Q_1,V\rangle$,
 we have
\begin{equation*}
    \begin{split}
        &\quad |f_\tau(K)-f(K)|= 
        \left| \left\langle Q^\prime,\text{Var}\left( \begin{pmatrix} e_x^\star(\tau)\\ z^\star(\tau)\end{pmatrix} \right) -X \right\rangle \right| \\
        &=\left| \left\langle Q^\prime, 
        \exp{(\bar{A}_K \tau)} \bar{\Sigma}_0 \exp{(\bar{A}_K^\top \tau)}\right\rangle \right|\\
       &\quad +\left| \left\langle Q^\prime, \int_\tau^\infty  \exp{(\bar{A}_K t)}
        \tilde{W}_K
        \exp{(\bar{A}_K^\top t)} dt
        \right\rangle \right| \\
    &\leq \lambda_{\max}(Q^\prime) \mathrm{tr}(\bar{\Sigma}_0) \|\exp(A_K \tau)\|^2_2\\
    &\quad +\lambda_{\max}(Q^\prime) \mathrm{tr}(\tilde{W}_K) \int_\tau^\infty \|\exp(A_K \tau)\|^2_2 dt,
    \end{split}
\end{equation*}
where the second equation follows from (\ref{eq:var_Pcont}) and 
$X=\int_0^\infty \exp(\bar{A}_K t) \tilde{W}_K \exp(\bar{A}_K^\top t)  dt $.
Applying \cite[Lemma 12]{Convergence_Complexity} yields
\begin{IEEEeqnarray*}{lCl}
    \|\exp(A_K t)\|^2_2 &&\leq (\|X\|_2 /\lambda_{\min}(X)) \exp{ (-(\lambda_{\min}(\tilde{W}_K)/\|X\|_2)t )}\\
    &&\leq (\|X\|_2 /\lambda_{\min}(X)) \exp{ (-t/\mathfrak{C} )},
\end{IEEEeqnarray*}
where $\mathfrak{C}= \frac{\mathfrak{c} }{ \mathfrak{f} \lambda_{\min}(V) } \leq \frac{ \|X\|_2}{ \|\tilde{W}_K \|_2 }$
and $\mathfrak{f}$ is defined in the proof of Lemma~\ref{lem:bound_of_smoothing}.
Furthermore, from \cite[Lemma A.5]{Ilyas_Polyack2021}, the smallest eigenvalue of $\lambda_{\min}(X)$ satisfies
$ \lambda_{\min}(X) \geq \frac{\lambda_{\min}(\tilde{W}_K)}{\|\bar{A}_K\|_2}
    \geq \frac{\lambda_{\min}(V)}{\mathfrak{e}_{\Omega}}$,
where we define
\begin{equation}\label{eq:bound_spec_AK}
    \mathfrak{e}_{\Omega} = \|\bar{A}\|_2+\|\bar{B}\|_2\|\bar{C}\|_2 \mathfrak{d}_{\Omega}\geq 
    \|\bar{A}_K\|_2.
\end{equation}
Therefore, 
\begin{equation}\label{eq:bound_of_expAK}
\begin{split}
    \|\exp(A_K t)\|^2_2 
    &\leq \frac{\mathfrak{c} \mathfrak{e}_{\Omega}}{ \mathfrak{f} \lambda_{\min}(V) }
    \exp{ \left(- \frac{t}{\mathfrak{C}} \right)}.
    \end{split}
\end{equation}
holds.
Combining these bounds,
we obtain
\begin{IEEEeqnarray*}{lCl}
    &\quad \left\| \bar{\nabla}f(K)- \bar{\nabla}f_\tau (K)\right\|_{\rm{F}}\leq \frac{\sqrt{2mp}  }{r } |f_\tau(K)-f(K)| \\
   &\leq \frac{\sqrt{2mp} \lambda_{\max}(Q^\prime) \mathfrak{c} \mathfrak{e}_{\Omega} }{r\mathfrak{f} \lambda_{\min}(V) } 
     \left(\mathrm{tr}(\bar{\Sigma}_0)+ \mathfrak{C}\mathrm{tr}(\tilde{W}_K)  \right)
     \exp{ (- \frac{\tau}{\mathfrak{C} })} \\
  &\leq \frac{\sqrt{2mp} \lambda_{\max}(Q^\prime) \mathfrak{c} \mathfrak{e}_{\Omega} }{ r\mathfrak{f} \lambda_{\min}(V) } 
     \left(\mathrm{tr}(\bar{\Sigma}_0)+ (n+p)\mathfrak{c} \mathfrak{C}\right)
     \exp{ (- \frac{\tau}{\mathfrak{C} })},
\end{IEEEeqnarray*}
where the last inequalities follows from $\mathrm{tr}(\tilde{W}_K)  \leq (n+p) \lambda_{\max}(\tilde{W}_K) \leq (n+p)\mathfrak{c} $.
The above inequality yields the lower bound on $\tau$.
\par

The fourth term in (\ref{eq:bound_of_estimator}) corresponds to the estimation error of the feedforward
$\hat{u}_0$, which is obtained in Algorithm~\ref{algo:DataMatrix}.
Using the triangle inequality, this term admits the decomposition
\begin{equation*}
\begin{split}
    &\quad \left\| \bar{\nabla}f_\tau(K)- \bar{\nabla}f_\tau (K; \hat{u}_0)\right\|_{\rm{F}}\\
    &=\| \frac{1}{2rN} \sum_{i=1}^N
    ( \{f_\tau(K+rU^i) -f_{\tau}(K+rU^i; \hat{u}_0 )\}  \\
   & \quad\quad  - \{f_\tau(K-rU^i) -f_{\tau}(K-rU^i ; \hat{u}_0) \} )U^i \|_\mathrm{F}\\
   &\leq \frac{\sqrt{2mp}  }{2rN } \sum_{i=1}^N \left( |f_\tau(K+rU^i) -f_{\tau}(K+rU^i; \hat{u}_0 ) | \right.\\
    &\left. \quad\quad\quad\quad\quad\quad+|f_\tau(K-rU^i) -f_{\tau}(K-rU^i; \hat{u}_0 ) | \right)
\end{split}
\end{equation*}
The definition of $f_\tau(K;\hat{u}_0)$ (\ref{eq:def_f_tau_uhat}) implies that
\begin{equation*}
    \begin{split}
        f_\tau(K;\hat{u}_0) &= \langle Q_1,V\rangle + \left\langle Q^\prime,\text{Var}\left( \begin{pmatrix} e_x(\tau)\\ z(\tau)\end{pmatrix} \right) \right\rangle \\
        &
        \quad+\left\langle Q^\prime,\mathbb{E}\left[\begin{pmatrix}e_x(\tau)\\z(\tau)\end{pmatrix} \right]
        \mathbb{E}\left[\begin{pmatrix}e_x(\tau)\\z(\tau)\end{pmatrix} \right]^\top
        \right\rangle,
    \end{split}
\end{equation*}
and hence we have
\begin{IEEEeqnarray*}{lCl}
        &&|f_\tau(K;\hat{u}_0)-f_\tau(K)| \\
        &&
        =\left| 
        \left\langle Q^\prime,
        \mathbb{E}\left[\begin{pmatrix}e_x(\tau)\\z(\tau)\end{pmatrix} \right]
        \mathbb{E}\left[\begin{pmatrix}e_x(\tau)\\z(\tau)\end{pmatrix} \right]^\top \right. \right.\\
      &&\left.\left. \quad\quad  -\mathbb{E}\left[\begin{pmatrix}e_x^\star(\tau)\\z^\star(\tau)\end{pmatrix} \right]
        \mathbb{E}\left[\begin{pmatrix}e_x^\star(\tau)\\z^\star(\tau)\end{pmatrix} \right]^\top
        \right\rangle
        \right|\\
        &&\leq \lambda_{\max}(Q^\prime) 
        \left( \left\|\mathbb{E}\left[\begin{pmatrix}e_x^\star(\tau)\\z^\star(\tau)\end{pmatrix} \right] \right\|
        +\left\| \mathbb{E}\left[\begin{pmatrix}e_x(\tau)\\z(\tau)\end{pmatrix} \right] \right\| \right)\\
        &&\quad\times \left\|\mathbb{E}\left[\begin{pmatrix}e_x^\star(\tau)\\z^\star(\tau)\end{pmatrix} \right] - \mathbb{E}\left[\begin{pmatrix}e_x(\tau)\\z(\tau)\end{pmatrix} \right] \right\|.
\end{IEEEeqnarray*}
From (\ref{eq:mean_var_augdynamics}), we obtain the bound
\begin{IEEEeqnarray*}{lCl}
        &&\left\|\mathbb{E}\left[\begin{pmatrix}e_x^\star(\tau)\\z^\star(\tau)\end{pmatrix} \right] - \mathbb{E}\left[\begin{pmatrix}e_x(\tau)\\z(\tau)\end{pmatrix} \right] \right\|\\
        &&\leq \left\{ \|\exp(\bar{A}_K \tau)\|_2 +1
        \right\}
        \|\bar{A}_K\|_2 \|\bar{B}\|_2 \gamma_u,\\
        &&\leq \left(
   \frac{\mathfrak{c} \mathfrak{e}_{\Omega}}{ \mathfrak{f} \lambda_{\min}(V) }
    \exp{ \left(- \frac{\tau}{2\mathfrak{C}} \right)} +1 
    \right) \mathfrak{e}_\Omega  \|\bar{B}\|_2 \gamma_u
\end{IEEEeqnarray*}
where the last inequality follows from (\ref{eq:bound_spec_AK}) and (\ref{eq:bound_of_expAK}),
and $\gamma_u\coloneq \|\hat{u}_0-u^\star\|$.
if $\tau$ is chosen sufficiently large so that
$   \frac{\mathfrak{c} \mathfrak{e}_{\Omega}}{ \mathfrak{f} \lambda_{\min}(V) }
    \exp{ \left(- \frac{\tau}{2\mathfrak{C}} \right)} \leq 1$,
then it holds that
\begin{IEEEeqnarray*}{lCl}
    \left\| \bar{\nabla}f_\tau(K)- \bar{\nabla}f_\tau(K;\hat{u}_0)\right\|_{\rm{F}} 
    =O(\gamma_u^2).
\end{IEEEeqnarray*} \par
Finally, we bound the last term of (\ref{eq:bound_of_estimator}), which arises from the 
variance of $\frac{1}{N_{\mathrm{sub}}}\sum_{j=1}^{N_{\mathrm{sub}}}\hat{f}^{i,j,k}$, computed in Step~7 of Algorithm~\ref{algo:estimation_grad_cost}.
The simulated cost $\hat{f}^{i,j,k}$ is equivalent to
$\left\langle Q^\prime, \begin{pmatrix} e_x(\tau)\\z(\tau)\end{pmatrix} \begin{pmatrix} e_x(\tau)\\z(\tau)\end{pmatrix}^\top \right\rangle+\langle Q_1,vv^\top \rangle$,
where $e_x(\tau)$ and $z(\tau)$ are the error state and the integrator 
in Step~5 of Algorithm~\ref{algo:estimation_grad_cost}, respectively.
This implies $f_\tau(K^{i,k} ;\hat{u}_0)=\mathbb{E}[ \hat{f}^{i,j,k}]$, where $K^{i,1}$ and $K^{i,2}$ are defined in Step~3 of Algorithm~\ref{algo:projected_grad_gain}.
The Hanson-Wright inequality \cite[Theorem 6.2.1]{Vershynin_2018} implies that for sufficiently small $\epsilon$,
we have
\begin{equation*}
\begin{split}
   &\quad \mathbb{P}\left\{ \left| \frac{1}{N_{\mathrm{sub}}}\sum_{j=1}^{N_{\mathrm{sub}}}\hat{f}^{i,j,k}
    -f_\tau(K^{i,k};\hat{u}_0)\right|\geq \epsilon
    \right\}\\
   & \leq 2\exp{\left( -c N_{\mathrm{sub}}
   \min\left\{
    \frac{ \epsilon^2}
    {\mathfrak{G}^4 \| S_{\tau} \|_{\mathrm{F}}^2 } ,
     \frac{ \epsilon}
    {\mathfrak{G}^2 \| S_{\tau} \|_2 }
    \right\}
    \right)} 
    \end{split}
\end{equation*}
where $c$ is a absolute positive constant, and
 $S_{\tau}\coloneq\bar{\Sigma}_\tau^{\frac{1}{2}} Q^\prime \bar{\Sigma}_\tau^{\frac{1}{2}}
    +V^{\frac{1}{2}}Q_1 V^{\frac{1}{2}}$. 
From the above concentration inequality and \cite[Lemma 11]{Soltanolkotabi_etal_2019}, we have
\begin{equation*}
\begin{split}
    &\quad \left\| \frac{1}{N_{\mathrm{sub}}}\sum_{j=1}^{N_{\mathrm{sub}}}\hat{f}^{i,j,k}
    -f_\tau(K^{i,k};\hat{u}_0)\right\|_{\psi_1}
    \leq \frac{9\mathfrak{G}^2 \|P_{\tau_u} \|_{\mathrm{F}}}{ 
      \sqrt{2cN_{\mathrm{sub}} }
      }.
\end{split}
\end{equation*}
The above bound of sub-exponential norm implies that 
\begin{equation}\label{eq:prob_ineq_innerloop}
\begin{split}
     &\quad \mathbb{P}\left\{ \left| \frac{1}{N_{\mathrm{sub}}}\sum_{j=1}^{N_{\mathrm{sub}}}\hat{f}^{i,j,k}
    -f_\tau(K^{i,k};\hat{u}_0)\right|\geq s
    \right\}\\
    &\leq
    2 \exp \left(
    -\frac{c \sqrt{2cN_{\mathrm{sub}}} }{9\mathfrak{G}^2 \|S_{\tau
}\|_{\mathrm{F}} } s
    \right).
\end{split}
\end{equation}
From (\ref{eq:nabla_hat}) and (\ref{eq:def_bar_nabla_uhat}),
the last term of (\ref{eq:bound_of_estimator}) is bounded as follows:
\begin{IEEEeqnarray*}{lCl}
       && \left\| \bar{\nabla}f_\tau(K; \hat{u}_0)- \hat{\nabla}f_\tau(K:\hat{u}_0)\right\|_{\rm{F}}\\
       && \leq \frac{\sqrt{2mp}}{2rN } \sum_{i=1}^N \left(
       \left|f_{\tau}(K^{i,1};\hat{u}_0)-\frac{1}{N_{\mathrm{sub}}}\sum_{j=1}^{N_{\mathrm{sub}}} \hat{f}^{i,j,1}\right| \right.\\
      &&\left. \quad\quad\quad\quad\quad\quad\quad +\left|f_{\tau}(K^{i,2};\hat{u}_0)-\frac{1}{N_{\mathrm{sub}}}\sum_{j=1}^{N_{\mathrm{sub}}} \hat{f}^{i,j,2}\right|
       \right).
    \end{IEEEeqnarray*}
Combining the above inequality and (\ref{eq:prob_ineq_innerloop}) completes the proof.

\end{proof}

\subsection{ The analysis on model-based method }\label{appendix:indirect_approach}

We establish perturbation bounds on the solutions to the Lyapunov equations.
These bounds are used to quantify the perturbation of the gradient $\nabla f_\mathrm{d}(K)-\nabla f_\mathrm{Id}(K)$ in the following theorem.
\begin{lemma}\label{lem:perturb_X}
Suppose that (\ref{eq:assumption_identification}) and (\ref{eq:condition_perturb_X}) hold,
where $\mathfrak{d}_\Omega$ in (\ref{eq:condition_perturb_X}) is defined in Lemma~\ref{lem:bound_of_smoothing}.
Let $X_\mathrm{d}$ and $X_\mathrm{Id}$ be the solutions to the following Lyapunov Equations
        $\bar{A}_{K,\mathrm{d} } X_\mathrm{d} \bar{A}_{K,\mathrm{d} }^\top -X_\mathrm{d} 
        + \begin{pmatrix} W_d + B_\mathrm{d} K_P V (B_\mathrm{d} K_P)^\top & B_\mathrm{d} K_P V \\ (B_\mathrm{d} K_P V)^\top& V\end{pmatrix} =O$
and 
$\bar{A}_{K,\mathrm{Id}} X_{\mathrm{Id}} \bar{A}_{K,\mathrm{Id}}^\top - X_{\mathrm{Id}} 
        +\begin{pmatrix} W_\mathrm{Id} + B_{\mathrm{Id}} K_P V_{\mathrm{Id}} (B_{\mathrm{Id}} K_P)^\top & B_{\mathrm{Id}} K_P V_{\mathrm{Id}} \\ (B_{\mathrm{Id}} K_P V_{\mathrm{Id}})^\top& V_{\mathrm{Id}}\end{pmatrix} =O$,
respectively. Similarly, let $Y_\mathrm{d}$
and $Y_\mathrm{Id}$ be the solutions to the following Lyapunov Equations
$\bar{A}_{K,\mathrm{Id} }^\top Y_\mathrm{Id}  \bar{A}_{K,\mathrm{Id}} -Y_\mathrm{Id} +Q^\prime_\mathrm{Id}=O$
and
$\bar{A}_{K,\mathrm{d} }^\top Y_\mathrm{d}  \bar{A}_{K,\mathrm{d}} -Y_\mathrm{d} +Q^\prime_\mathrm{d}=O$, respectively.
Then we have $\|X_\mathrm{d}  -X_{\mathrm{Id}}\|_{\mathrm{F}},\, \|Y_\mathrm{d}  -Y_{\mathrm{Id}}\|_{\mathrm{F}}=O(\epsilon_\mathrm{Id})$.
\end{lemma}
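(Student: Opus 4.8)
The plan is to treat each of the two claims as a perturbation of a discrete-time Lyapunov equation and to bound the solution difference through vectorization. I will give the argument for $X$ in detail; the argument for $Y$ is identical after replacing $\bar{A}_{K,\mathrm{d}}$ by $\bar{A}_{K,\mathrm{d}}^\top$ and the forcing term by $Q'_\mathrm{d}$.

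First I would record that all the perturbed building blocks are of order $\epsilon_\mathrm{Id}$. Write $\Delta A := \bar{A}_{K,\mathrm{d}} - \bar{A}_{K,\mathrm{Id}}$ and let $\tilde{W}_{K,\mathrm{d}},\,\tilde{W}_{K,\mathrm{Id}}$ denote the two block forcing matrices appearing in the $X$-equations, with $\Delta W := \tilde{W}_{K,\mathrm{d}} - \tilde{W}_{K,\mathrm{Id}}$. Expanding $\bar{A}_{K,\mathrm{d}} = \bar{A}_\mathrm{d} - \bar{B}_\mathrm{d} K \bar{C}$ and $\bar{A}_{K,\mathrm{Id}} = \bar{A}_\mathrm{Id} - \bar{B}_\mathrm{Id} K \bar{C}_\mathrm{Id}$ into their block forms and using (\ref{eq:assumption_identification}), the boundedness $\|K\|_\mathrm{F} \leq \mathfrak{d}_\Omega$ for $K \in \Omega$, and standard product-perturbation estimates, gives $\|\Delta A\|_2 = O(\epsilon_\mathrm{Id})$ and $\|\Delta W\|_2 = O(\epsilon_\mathrm{Id})$; the same reasoning yields $\|Q'_\mathrm{d} - Q'_\mathrm{Id}\|_2 = O(\epsilon_\mathrm{Id})$, since $Q'$ depends on the identified data only through $C$.

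Next I would subtract the two Lyapunov equations. Setting $\Delta X := X_\mathrm{d} - X_\mathrm{Id}$ and substituting $\bar{A}_{K,\mathrm{d}} = \bar{A}_{K,\mathrm{Id}} + \Delta A$, a short manipulation produces a discrete-time Lyapunov equation for $\Delta X$ driven by the Schur-stable matrix $\bar{A}_{K,\mathrm{d}}$:
\[
\bar{A}_{K,\mathrm{d}}\, \Delta X\, \bar{A}_{K,\mathrm{d}}^\top - \Delta X
= -\Delta W
- \left( \Delta A\, X_\mathrm{Id}\, \bar{A}_{K,\mathrm{Id}}^\top
+ \bar{A}_{K,\mathrm{Id}}\, X_\mathrm{Id}\, \Delta A^\top
+ \Delta A\, X_\mathrm{Id}\, \Delta A^\top \right).
\]
Denoting the right-hand side by $R$ and vectorizing, one obtains $\mathrm{vec}(\Delta X) = (I - \bar{A}_{K,\mathrm{d}} \otimes \bar{A}_{K,\mathrm{d}})^{-1}\,\mathrm{vec}(R)$. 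Since $\bar{A}_{K,\mathrm{d}}$ is Schur stable, $I - \bar{A}_{K,\mathrm{d}} \otimes \bar{A}_{K,\mathrm{d}}$ is invertible with bounded inverse, so it remains to bound $\|R\|_\mathrm{F}$. The term $\Delta W$ is $O(\epsilon_\mathrm{Id})$, and each of the remaining terms pairs the $O(\epsilon_\mathrm{Id})$ factor $\Delta A$ with bounded factors, provided $\|X_\mathrm{Id}\|_2$ is bounded. Hence $\|R\|_\mathrm{F} = O(\epsilon_\mathrm{Id})$, and therefore $\|\Delta X\|_\mathrm{F} = O(\epsilon_\mathrm{Id})$.

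The main obstacle is establishing the uniform boundedness of $\|X_\mathrm{Id}\|_2$ (and, for the $Y$ claim, of the inverse operator norm built from $\bar{A}_{K,\mathrm{Id}}$). This requires the identified closed-loop matrix $\bar{A}_{K,\mathrm{Id}}$ to remain Schur stable, together with a uniform bound on $\sum_{t\geq 0}\|\bar{A}_{K,\mathrm{Id}}^t\|_2^2$, which is exactly what condition (\ref{eq:condition_perturb_X}) is designed to secure once $\epsilon_\mathrm{Id}$ is small relative to the stability margin of $\bar{A}_{K,\mathrm{d}}$. With that stability in hand, $X_\mathrm{Id}$ solves a Schur-stable Lyapunov equation with bounded forcing $\tilde{W}_{K,\mathrm{Id}}$, so $\|X_\mathrm{Id}\|_2 = O(1)$, closing the argument. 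Running the identical computation with $\bar{A}_{K,\mathrm{d}}^\top$ and $Q'_\mathrm{d}$ in place of $\bar{A}_{K,\mathrm{d}}$ and $\tilde{W}_{K,\mathrm{d}}$ then yields $\|Y_\mathrm{d} - Y_\mathrm{Id}\|_\mathrm{F} = O(\epsilon_\mathrm{Id})$.
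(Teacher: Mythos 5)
Your proof is correct, but it takes a genuinely different route from the paper's. The paper invokes the perturbation bound of Gahinet et al.\ (their Theorem~2.6), which expresses $\|X_\mathrm{d}-X_\mathrm{Id}\|_2$ in terms of a right-hand side that itself contains $\|X_\mathrm{d}-X_\mathrm{Id}\|_2$; condition (\ref{eq:condition_perturb_X}) is then used to show $\|P^{-1}\|_2\{2\|\bar{A}_{K,\mathrm{d}}\|_2+\ell(\epsilon_\mathrm{Id})\}\ell(\epsilon_\mathrm{Id})\leq 1/2$ so that the implicit inequality can be solved for $\|X_\mathrm{d}-X_\mathrm{Id}\|_2$. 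You instead subtract the two Lyapunov equations directly, obtain an explicit discrete Lyapunov equation for $\Delta X$ driven by the Schur-stable $\bar{A}_{K,\mathrm{d}}$ with an $O(\epsilon_\mathrm{Id})$ forcing term, and invert via the Kronecker-product operator $I-\bar{A}_{K,\mathrm{d}}\otimes\bar{A}_{K,\mathrm{d}}$, whose bounded inverse is a fixed constant. Your derivation of $\|\Delta A\|_2=O(\epsilon_\mathrm{Id})$ coincides with the paper's bound on $\ell(\epsilon_\mathrm{Id})$, and both arguments ultimately rest on the same smallness condition; your route is more self-contained and, as a bonus, delivers the $O(\epsilon_\mathrm{Id})$ rate more transparently than the paper's final display (which as printed carries a spurious additive constant). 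The one step you assert rather than prove is that condition (\ref{eq:condition_perturb_X}) secures Schur stability of $\bar{A}_{K,\mathrm{Id}}$ and a uniform bound on $\|X_\mathrm{Id}\|_2$: this does follow, because the condition forces $\|P^{-1}\|_2\{2\|\bar{A}_{K,\mathrm{d}}\|_2+\ell\}\ell\leq 1/2$, which is the standard sufficient condition for the perturbed discrete Lyapunov operator to remain invertible with inverse norm at most $2\|P^{-1}\|_2$, hence for $\bar{A}_{K,\mathrm{Id}}$ to stay Schur stable and $X_\mathrm{Id}$ to stay bounded by $2\|P^{-1}\|_2\|\tilde{W}_{K,\mathrm{Id}}\|$ up to dimension factors. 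Spelling out that half-page computation would make your argument fully rigorous; without it the proof has a small but fillable gap.
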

\begin{figure*}[b]
    \centering
    \rule{\textwidth}{0.6pt}
    \textcolor{black}{
\begin{equation}\label{eq:condition_perturb_X}
\begin{split}
   & \epsilon_\mathrm{Id}\leq
    \frac{-(\|A_\mathrm{d}\|_2+\|C_\mathrm{d}\|_2 +1)+\sqrt{(\|A_\mathrm{d}\|_2+\|C_\mathrm{d}\|_2 +1)^2+4 (\mathfrak{d}_\Omega +h)\mathfrak{u}}}{2 (\mathfrak{d}_\Omega +1) }\\
    &\mathfrak{u}=-\|\bar{A}_{K,\mathrm{d} }\|_2 +\sqrt{ \|\bar{A}_{K,\mathrm{d} }\|_2^2 +\frac{1}{2\|P^{-1}\|_2}}
    \quad
    P=I\otimes I -\bar{A}_{K,\mathrm{d}}^\top \otimes \bar{A}_{K,\mathrm{d}}^\top
\end{split}
\end{equation}    
    }
\end{figure*}
\begin{proof}
    Applying \cite[Theorem 2.6]{Gahinet_etal_1990} yields
    \begin{equation*}
        \begin{split}
            \|X_\mathrm{d} -X_\mathrm{Id} \|_2 &\leq \|P^{-1}\|_2 \left\| \Delta \tilde{W}_K\right\|_2 \\
            &\quad+\{\| P^{-1}\|_2(2\|\bar{A}_{K,\mathrm{d} }\||_2 +\|\bar{A}_{K,\mathrm{d} }-\bar{A}_{K,\mathrm{Id} }\||_2 \}\\
            &\quad\quad\times(\|X_\mathrm{d} \|_2 +\|X_\mathrm{d} -X_\mathrm{Id} \|_2 ),
        \end{split}
    \end{equation*}
    where $\Delta \tilde{W}_K$ denotes
    $\begin{pmatrix} W_d + B_\mathrm{d} K_P V (B_\mathrm{d} K_P)^\top & B_\mathrm{d} K_P V \\ (B_\mathrm{d} K_P V)^\top& V\end{pmatrix}-
        \begin{pmatrix} W_\mathrm{Id} + B_{\mathrm{Id}} K_P V_{\mathrm{Id}} (B_{\mathrm{Id}} K_P)^\top & B_{\mathrm{Id}} K_P V_{\mathrm{Id}} \\ (B_{\mathrm{Id}} K_P V_{\mathrm{Id}})^\top& V_{\mathrm{Id}}\end{pmatrix}$.
    Rearranging gives
    \begin{equation}\label{eq:_bound_of_purturbX}
        \begin{split}
           & \left(
            1- \|P^{-1}\|_2 
             \{2\|\bar{A}_{K,\mathrm{d} }\|_2 +\ell(\epsilon_\mathrm{Id} ) \}
             \ell(\epsilon_\mathrm{Id})
            \right)\|X_\mathrm{d} -X_\mathrm{Id} \|_2\\
           & \leq 
             \|P^{-1}\|_2 \left\| \Delta \tilde{W}_K\right\|_2 + \|P^{-1}\|_2 
             \{2\|\bar{A}_{K,\mathrm{d} }\|_2 +\ell(\epsilon_\mathrm{Id} ) \}
             \ell(\epsilon_\mathrm{Id})
        \end{split}
    \end{equation}
    where we define $\ell(\epsilon_\mathrm{Id})=\|\bar{A}_{K,\mathrm{d} }-\bar{A}_{K,\mathrm{Id} }\|_2$.
    The perturbation of the closed-loop matrix $\ell(\epsilon_\mathrm{Id})$ is bounded as follows:
    \begin{equation*}
        \begin{split}
            &\quad \ell(\epsilon_\mathrm{Id})\\
            &\leq \left\|\begin{pmatrix}
                A_\mathrm{d}- A_\mathrm{Id}& O\\ -(CA_\mathrm{d}- C_\mathrm{Id} A_\mathrm{Id}) &O
            \end{pmatrix} \right\|_2\\
            &\quad+ \left\|\begin{pmatrix}
                B_\mathrm{d}- B_\mathrm{Id}\\ O
            \end{pmatrix} \right\|_2 
            \|K\|_2
            \left\|\begin{pmatrix}
                C_\mathrm{d}- C_\mathrm{Id}& O\\ O &O
            \end{pmatrix} \right\|_2\\
            &\leq \| A_\mathrm{d}- A_\mathrm{Id}\|_2 + \|CA_\mathrm{d}- C_\mathrm{Id} A_\mathrm{Id}\|_2\\
           &\quad +\| B_\mathrm{d}- B_\mathrm{Id}\|_2 \|K\|_2 \| C_\mathrm{d}- C_\mathrm{Id}\|_2\\
           &\leq \epsilon_\mathrm{Id} +(\epsilon_\mathrm{Id} \|A_\mathrm{d}\|_2 + (\|C_\mathrm{d}\|_2 +\epsilon_\mathrm{Id})\epsilon_\mathrm{Id})+ \mathfrak{d}_\Omega \epsilon_\mathrm{Id}^2\\
            &= (1+\mathfrak{d}_\Omega)\epsilon_\mathrm{Id}^2 
            +(\|A_\mathrm{d}\|_2 + \|C_\mathrm{d}\|_2+1) \epsilon_\mathrm{Id}
        \end{split}
    \end{equation*}
    Suppose the condition (\ref{eq:condition_perturb_X}) holds. Then we have
    \begin{equation*}
    \begin{split}
       \ell(\epsilon_\mathrm{Id})\leq
        -\|\bar{A}_{K,\mathrm{d} }\|_2 +\sqrt{\|\bar{A}_{K,\mathrm{d} }\|_2^2 +1/2\|P^{-1}\|_2},
    \end{split}
    \end{equation*}
    and subsequently 
    \begin{equation*}
         \|P^{-1}\|_2 
             \{2\|\bar{A}_{K,\mathrm{d} }\|_2 +\ell(\epsilon_\mathrm{Id} ) \}
             \ell(\epsilon_\mathrm{Id})\leq \frac{1}{2}
    \end{equation*}
    holds.
    Substituting this inequality into (\ref{eq:_bound_of_purturbX}) shows that
    \begin{equation*}
        \begin{split}
           & \|X_\mathrm{d} -X_\mathrm{Id} \|_2 \leq 
            2 \|P^{-1}\|_2 \left\| \Delta \tilde{W}_K\right\|_2 +1.
        \end{split}
    \end{equation*}
    From the above inequality
    and $\|\Delta \tilde{W}_K\|_2 =O(\epsilon_\mathrm{Id})$,
    which completes the proof for $\|X_\mathrm{d} -X_\mathrm{Id} \|_\mathrm{F}$.
    A similar argument yields $\|Y_\mathrm{d} -Y_\mathrm{Id} \|_\mathrm{F}=O(\epsilon_\mathrm{Id})$.
\end{proof}

The following theorem shows that we can obtain an $O(\epsilon_\mathrm{Id})$-stationary point of $f_\mathrm{d}(K)$ via applying a projected gradient descent to (\ref{prob:PI_Identified}).
\begin{theorem}\label{thm:degradation_indirect}
    Suppose that the inequality (\ref{eq:assumption_identification}) and the condition (\ref{eq:condition_perturb_X}) in Lemma~\ref{lem:perturb_X} hold. 
    Let $K^\star_{\mathrm{Id}}$ be $\epsilon_\mathrm{Id}$-stationary point of $f_\mathrm{Id}(K)$ with the step size $\eta$, where an
    $\epsilon$-stationary point is defined in Definition~\ref{def:stationary_point}.
    Then $K^\star_{\mathrm{Id}}$ is an $O(\epsilon_\mathrm{Id})$-stationary point of $f_\mathrm{d}(K)$.
\end{theorem}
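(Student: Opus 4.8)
The plan is to reduce the claim to a single gradient-perturbation estimate and then exploit the nonexpansiveness of the Euclidean projection. Writing $G_\eta^{\mathrm{d}}$ and $G_\eta^{\mathrm{Id}}$ for the gradient mappings of $f_\mathrm{d}$ and $f_\mathrm{Id}$ as in Definition~\ref{def:stationary_point}, the goal is to establish $\|G_\eta^{\mathrm{d}}(K^\star_\mathrm{Id})\|_\mathrm{F} = O(\epsilon_\mathrm{Id})$ given the hypothesis $\|G_\eta^{\mathrm{Id}}(K^\star_\mathrm{Id})\|_\mathrm{F} \le \epsilon_\mathrm{Id}$.

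First I would establish that the two gradients are close at the point of interest, i.e. $\|\nabla f_\mathrm{d}(K^\star_\mathrm{Id}) - \nabla f_\mathrm{Id}(K^\star_\mathrm{Id})\|_\mathrm{F} = O(\epsilon_\mathrm{Id})$. Starting from the closed form (\ref{eq:gradient_discrete_id}) for $\nabla f_\mathrm{Id}$ and its analogue for $\nabla f_\mathrm{d}$ (obtained by the same derivation), I would expand the difference as a telescoping sum over the factors $\bar{B}$, $Y$, $\bar{A}_K$, $X$, $\bar{C}$, $K$, $V$, replacing one identified factor by its true-discrete counterpart at a time. Each resulting term is controlled by the assumption (\ref{eq:assumption_identification}) on the system-matrix errors together with the Lyapunov-solution perturbation bounds $\|X_\mathrm{d}-X_\mathrm{Id}\|_\mathrm{F},\,\|Y_\mathrm{d}-Y_\mathrm{Id}\|_\mathrm{F} = O(\epsilon_\mathrm{Id})$ furnished by Lemma~\ref{lem:perturb_X}, provided the remaining factors are uniformly bounded; this boundedness follows from $K^\star_\mathrm{Id} \in \Omega$ (hence $\|K^\star_\mathrm{Id}\|_2 \le \mathfrak{d}_\Omega$), from (\ref{eq:assumption_identification}), and from the explicit bounds on $\|X_\mathrm{d}\|_2,\,\|Y_\mathrm{d}\|_2$ already derived inside the proof of Lemma~\ref{lem:perturb_X}.

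Next I would transfer this gradient estimate to the gradient mappings. Since $\mathrm{proj}_\Omega$ is $1$-Lipschitz, for any $K$ the step size cancels:
\[
\|G_\eta^{\mathrm{d}}(K) - G_\eta^{\mathrm{Id}}(K)\|_\mathrm{F}
= \tfrac{1}{\eta}\bigl\| \mathrm{proj}_\Omega(K-\eta\nabla f_\mathrm{d}(K)) - \mathrm{proj}_\Omega(K-\eta\nabla f_\mathrm{Id}(K)) \bigr\|_\mathrm{F}
\le \|\nabla f_\mathrm{d}(K) - \nabla f_\mathrm{Id}(K)\|_\mathrm{F},
\]
so the right-hand side is $O(\epsilon_\mathrm{Id})$ at $K = K^\star_\mathrm{Id}$. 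A final triangle inequality
\[
\|G_\eta^{\mathrm{d}}(K^\star_\mathrm{Id})\|_\mathrm{F}
\le \|G_\eta^{\mathrm{d}}(K^\star_\mathrm{Id}) - G_\eta^{\mathrm{Id}}(K^\star_\mathrm{Id})\|_\mathrm{F} + \|G_\eta^{\mathrm{Id}}(K^\star_\mathrm{Id})\|_\mathrm{F}
= O(\epsilon_\mathrm{Id}) + \epsilon_\mathrm{Id}
= O(\epsilon_\mathrm{Id})
\]
then yields the claim.

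I expect the main obstacle to be the first step rather than the projection argument. Propagating (\ref{eq:assumption_identification}) and Lemma~\ref{lem:perturb_X} through the multilinear expression (\ref{eq:gradient_discrete_id}) is conceptually routine but requires care in two respects: verifying that $\bar{A}_{K^\star_\mathrm{Id},\mathrm{d}}$ is Schur stable—so that $X_\mathrm{d}$, $Y_\mathrm{d}$, and hence $\nabla f_\mathrm{d}(K^\star_\mathrm{Id})$, are well-defined—which is exactly what condition (\ref{eq:condition_perturb_X}) is engineered to guarantee through the invertibility of $P$; and bookkeeping the cross terms of the telescoping expansion so that every one carries a factor $\epsilon_\mathrm{Id}$ with a constant depending only on $\Omega$ and the true-discrete data. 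Because Lemma~\ref{lem:perturb_X} already resolves the analytically delicate part, namely the self-referential Lyapunov perturbation estimate, what remains is a finite sequence of submultiplicative norm bounds.
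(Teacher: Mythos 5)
Your proposal is correct and follows essentially the same route as the paper's proof: a triangle-inequality decomposition of the gradient mapping at $K^\star_\mathrm{Id}$, the nonexpansiveness of $\mathrm{proj}_\Omega$ to reduce the problem to $\|\nabla f_\mathrm{d}(K^\star_\mathrm{Id})-\nabla f_\mathrm{Id}(K^\star_\mathrm{Id})\|_\mathrm{F}$, and then the closed-form gradient expression (\ref{eq:gradient_discrete_id}) combined with Lemma~\ref{lem:perturb_X} to bound that difference by $O(\epsilon_\mathrm{Id})$. Your explicit attention to the Schur stability of $\bar{A}_{K^\star_\mathrm{Id},\mathrm{d}}$ (so that $X_\mathrm{d}$, $Y_\mathrm{d}$ are well-defined) is a point the paper leaves implicit, but it does not change the argument.
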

\begin{proof}
    The criterion $\left\|
          \frac{\mathrm{proj}_\Omega (K_\mathrm{Id}^\star - \eta\nabla f_\mathrm{d}(K_\mathrm{Id}^\star))
          -K_\mathrm{Id}^\star}{\eta} \right\|_\mathrm{F}$ is bounded as follows
    \begin{IEEEeqnarray*}{lCl}
           && \left\|
          \frac{\mathrm{proj}_\Omega (K_\mathrm{Id}^\star - \eta\nabla f_\mathrm{d}(K_\mathrm{Id}^\star))
          -K_\mathrm{Id}^\star}{\eta} \right\|_\mathrm{F} \leq
          \\
         & &
           \frac{1}{\eta} \left\|
          \mathrm{proj}_\Omega (K_\mathrm{Id}^\star - \eta\nabla f_\mathrm{d}(K_\mathrm{Id}^\star))
          -\mathrm{proj}_\Omega (K_\mathrm{Id}^\star - \eta\nabla f_\mathrm{Id}(K_\mathrm{Id}^\star))
          \right\|_\mathrm{F} \\
         && + \frac{1}{\eta}\left\|
          \mathrm{proj}_\Omega (K_\mathrm{Id}^\star - \eta\nabla f_\mathrm{Id}(K_\mathrm{Id}^\star))
          -K_\mathrm{Id}^\star
          \right\|_\mathrm{F} \\
        &&\leq  \left\|
          \nabla f_\mathrm{d}(K_\mathrm{Id}^\star))
          -\nabla f_\mathrm{Id}(K_\mathrm{Id}^\star)
          \right\|_\mathrm{F} 
          + \frac{\epsilon_\mathrm{Id} }{\eta},
        \end{IEEEeqnarray*}
    where the last inequality holds because projections onto convex sets are contractive.
    Since the gradient expressions of $\nabla f_\mathrm{Id}(K)$ and $\nabla f_\mathrm{d}(K)$ are given by (\ref{eq:gradient_discrete_id}) and its true parameters counterparts, respectively,
    we obtain
    $ \left\|\nabla f_\mathrm{d}(K_\mathrm{Id}^\star))-\nabla f_\mathrm{Id}(K_\mathrm{Id}^\star))\right\|_\mathrm{F}\\
            =O(\max\{ \epsilon_\mathrm{Id},\,\|X_\mathrm{d}  -X_{\mathrm{Id}}\|_{\mathrm{F}},\,
            \|Y_\mathrm{d}  -Y_{\mathrm{Id}}\|_{\mathrm{F}} \} )$.
    Lemma~\ref{lem:perturb_X} provides that $\|Y_\mathrm{d}  -Y_{\mathrm{Id}}\|_{\mathrm{F}}, \|X_\mathrm{d}  -X_{\mathrm{Id}}\|_{\mathrm{F}} =O(\epsilon_\mathrm{Id})$, which completes the proof.
\end{proof}

\bibliographystyle{IEEEtran}
\bibliography{main.bib}

\end{document}